\providecommand{\U}[1]{\protect\rule{.1in}{.1in}}
\numberwithin{equation}{section}
\newtheorem{theorem}{Theorem}[section]
\newtheorem{corollary}{Corollary}[section]
\newtheorem{lemma}{Lemma}[section]
\newtheorem{proposition}{Proposition}[section]
\newtheorem{remark}{Remark}[section]
\newtheorem{definition}{Definition}[section]
\numberwithin{equation}{section}
\newcommand{\bbr}{\mathbb{R}}
\newcommand{\bbn}{\mathbb{N}}
\newcommand{\ve}{\varepsilon}
\newcommand{\bd}{\begin{definition}}
\newcommand{\ed}{\end{definition}}
\newcommand{\br}{\begin{remark}}
\newcommand{\er}{\end{remark}}
\newcommand{\be}{\begin{equation}}
\newcommand{\ee}{\end{equation}}
\newcommand{\bc}{\begin{corollary}}
\newcommand{\ec}{\end{corollary}}
\begin{document}

\title[Normalized solutions]{Normalized solutions for Schr\"odinger equations with critical Sobolev exponent and mixed nonlinearities}

\author[J. Wei]{Juncheng Wei}
\address{\noindent Department of Mathematics, University of British Columbia,
Vancouver, B.C., Canada, V6T 1Z2}
\email{jcwei@math.ubc.ca}

\author[Y.Wu]{Yuanze Wu}
\address{\noindent  School of Mathematics, China
University of Mining and Technology, Xuzhou, 221116, P.R. China }
\email{wuyz850306@cumt.edu.cn}

\begin{abstract}
In this paper, we consider the following nonlinear Schr\"{o}dinger equations with mixed nonlinearities:
\begin{eqnarray*}
\left\{\aligned
&-\Delta u=\lambda u+\mu |u|^{q-2}u+|u|^{2^*-2}u\quad\text{in }\mathbb{R}^N,\\
&u\in H^1(\bbr^N),\quad\int_{\bbr^N}u^2=a^2,
\endaligned\right.
\end{eqnarray*}
where $N\geq3$, $\mu>0$, $\lambda\in\bbr$ and $2<q<2^*$.  We prove in this paper
\begin{enumerate}
\item[$(1)$]\quad Existence of solutions of mountain-pass type for $N=3$ and $2<q<2+\frac{4}{N} $.
\item[$(2)$]\quad Existence and nonexistence of ground states for $2+\frac{4}{N}\leq q<2^*$ with $\mu>0$ large.
\item[$(3)$]\quad Precisely asymptotic behaviors of ground states and mountain-pass solutions as $\mu\to0$ and $\mu$ goes to its upper bound.
\end{enumerate}
Our studies answer some questions proposed by Soave in \cite[Remarks~1.1, 1.2 and 8.1]{S20}.

\vspace{3mm} \noindent{\bf Keywords:} Normalized solution; Ground state; Variational method; Critical nonlinearity; Mixed nonlinearities.

\vspace{3mm}\noindent {\bf AMS} Subject Classification 2010: 35B09; 35B33; 35B40; 35J20.%

\end{abstract}

\date{}
\maketitle

\section{Introduction}
In this paper, we consider the following nonlinear scalar field equation:
\begin{eqnarray}\label{eqn03}
\left\{\aligned
&-\Delta u=\lambda u+\mu |u|^{q-2}u+|u|^{2^*-2}u\quad\text{in }\mathbb{R}^N,\\
&u\in H^1(\bbr^N),
\endaligned\right.
\end{eqnarray}
where $N\geq3$, $\mu>0$, $\lambda\in\bbr$ and $2<q<2^*=\frac{2N}{N-2}$.

\vskip0.12in

\eqref{eqn03} is a special case of the following model,
\begin{eqnarray}\label{eqn02}
\left\{\aligned
&-\Delta u=\lambda u+f(u)\quad\text{in }\mathbb{R}^N,\\
&u\in H^1(\bbr^N),
\endaligned\right.
\end{eqnarray}
which is related to finding the stationary waves of nonlinear Schr\"odinger equations:
\begin{eqnarray}\label{eqn01}
i\psi_t+\Delta \psi+g(|\psi|^2)\psi=0\quad\text{in }\mathbb{R}^N.
\end{eqnarray}
Indeed, a stationary wave of \eqref{eqn01} is of the form $\psi(t,x)=e^{i\lambda t}u(x)$ where $\lambda\in\bbr$ is a constant and $u(x)$ is a time-independent function, then it is well-known that $\psi$ is a solution of \eqref{eqn01} if and only if $u$ is a solution of \eqref{eqn02} with $f(u)=g(|u|^2)u$.  As pointed out in \cite{S20,S201}, in general, the function $u$ is complex valued and thus, \eqref{eqn02} can be regarded as a complex valued system coupled by the nonlinearities $f(u)=g(|u|^2)u$.

\vskip0.12in

As pointed out in \cite{GS14,JL19}, the studies on \eqref{eqn02} can be traced back to the semi-classical papers \cite{BL83,BL831,L84,L841,S77}.  In these studies, there are two different methods to study \eqref{eqn02}.  The first one is to fix the number $\lambda<0$ in \eqref{eqn02} and restrict the unknown $u$ to be real valued.  In this case, \eqref{eqn02} is a single equation and under some mild assumptions on $f(u)$, the solutions of \eqref{eqn02} are critical points of the functional
\begin{eqnarray*}
\mathcal{J}(u)=\frac{1}{2}\int_{\bbr^N}(|\nabla u|^2-\lambda |u|^2)dx-\int_{\bbr^N}F(u)dx
\end{eqnarray*}
in the usual Sobolev space $H^1(\bbr^N)$, where $F(u)=\int_0^u f(t)dt$.  In this case, particular attention is devoted to least action solutions, namely solutions minimizing $\mathcal{J}(u)$ among all non-trivial solutions.  We also refer the readers to \cite{AIIK19,AIKN12,AIKN13,ASM12} and the references therein for the recent results on the special case \eqref{eqn03} in this direction.  Another one is to fix the $L^2$ norm of the unknown $u$, that is, to find solutions of \eqref{eqn02} with prescribed mass.  In this case, \eqref{eqn02} is always rewritten as follows
\begin{eqnarray}\label{eqn04}
\left\{\aligned
&-\Delta u=\lambda u+f(u)\quad\text{in }\mathbb{R}^N,\\
&u\in H^1(\bbr^N),\quad\int_{\bbr^N}u^2=a^2,
\endaligned\right.
\end{eqnarray}
where $\int_{\bbr^N}u^2=a^2$ is the prescribed mass, and in this case, $\lambda\in\bbr$ is a part of the unknown which appears in \eqref{eqn04} as a Lagrange multiplier.  In particular, \eqref{eqn03} is rewritten as
\begin{eqnarray}\label{eq0001}
\left\{\aligned
&-\Delta u=\lambda u+\mu |u|^{q-2}u+|u|^{2^*-2}u\quad\text{in }\mathbb{R}^N,\\
&u\in H^1(\bbr^N),\quad\int_{\bbr^N}u^2=a^2.
\endaligned\right.
\end{eqnarray}
Similar to the first case, under some mild assumptions on $f(u)$, the solutions of \eqref{eqn04} are critical points of the functional
\begin{eqnarray*}
\mathcal{F}_\mu(u)&=&\int_{\mathbb{R}^N}\big(\frac{1}{2}|\nabla u|^2-F(u)\big)dx
\end{eqnarray*}
on the smooth manifold
\begin{eqnarray*}
\mathcal{S}_a=\{u\in H^1(\bbr^N)\mid \|u\|_2^2=a^2\}.
\end{eqnarray*}
In particular, the solutions of \eqref{eq0001} are critical points of the $C^2$-functional
\begin{eqnarray*}
\mathcal{E}_\mu(u)&=&\frac{1}{2}\|\nabla u\|_2^2-\frac{\mu}{q}\|u\|_q^q-\frac{1}{2^*}\|u\|_{2^*}^{2^*}
\end{eqnarray*}
on $\mathcal{S}_a$, where we denote the usual norm in $L^p(\bbr^N)$ by $\|\cdot\|_p$.  In this case, solutions of \eqref{eqn04} are always called normalized solutions which are particularly relevant for the nonlinear Schr\"odinger equation~\eqref{eqn01} since the mass is preserved along the time
evolution in \eqref{eqn01}.  Thus, normalized solutions of \eqref{eqn04} seems to be particularly meaningful from the physical viewpoint, moreover, these solutions often offer a good insight of the dynamical properties of the stationary solutions for the nonlinear Schr\"odinger equation~\eqref{eqn01}, such as stability or instability (cf. \cite{BC81,CL82}).
In this case, particular attention is also devoted to least action solutions which are also called ground states for normalized solutions, namely solutions minimizing $\mathcal{F}_\mu(u)$ among all non-trivial solutions.  The studies on normalized solutions of \eqref{eqn04} is a hot topic in the community of nonlinear PDEs nowadays, thus, it is impossible for us to provide a complete references.  We just refer the readers to \cite{AW19,BS17,BV12,BJ16,BCGJ19,BES06,FM01,J97,JLW15,JL19,JL191,NTV14,PV17,L08,S17,S20,S201,S82} and the references therein.  In these references, we would like to highlight \cite{S20,S201,JL19} to the readers for their detail introductions and references on normalized solutions of \eqref{eqn04} and new directions on the study of normalized solutions of autonomous problems.  We also would like to point out \cite{GLW17,GLWZ19,GLWZ191,GS14,GWZ18,GZZ16,GZ20} and the references therein for the studies on normalized solutions of problems with trapping potentials.

\vskip0.12in

It is well-known that the number $p=2+\frac{4}{N}$ plays an important role in studying normalized solutions which is called the $L^2$ critical exponent or mass critical exponent in the literature.
Since $2^*=\frac{2N}{N-2}>2+\frac{4}{N}$, the nonlinearity of \eqref{eq0001} grows faster than $|u|^{p-2}u$ at infinity and thus, it is well-known that $\mathcal{E}_\mu(u)$ is unbounded from below on $\mathcal{S}_a$, which makes one to find new constraints to prove the existence of ground states of $\mathcal{E}_\mu(u)$ on $\mathcal{S}_a$.  The new constraint, which is introduced by Bartsch and Soave in \cite{BS17} for the general problem~\eqref{eqn04} and is widely used nowadays in studying normalized solutions, is the following $L^2$-Pohozaev manifold:
\begin{eqnarray*}
\mathcal{P}_{a,\mu}=\{u\in\mathcal{S}_a\mid \|\nabla u\|_2^2=\mu \gamma_q\|u\|_q^q+\|u\|_{2^*}^{2^*}\},
\end{eqnarray*}
where
\begin{eqnarray}\label{eq0008}
\gamma_q=\frac{N(q-2)}{2q}.
\end{eqnarray}
By the Pohozaev identity of \eqref{eq0001}, $\mathcal{P}_{a,\mu}$ contains all nontrivial solutions of \eqref{eq0001}, thus, we have the following definition of ground states of \eqref{eq0001}.
\begin{definition}
We say $(u_0,\lambda_0)$ is a ground state of \eqref{eq0001} if $u_0$ is a critical point of $\mathcal{E}_\mu|_{\mathcal{S}_a}(u)$ with $\mathcal{E}_\mu|_{\mathcal{S}_a}(u_0)=\inf_{u\in\mathcal{P}_{a,\mu}}\mathcal{E}_\mu(u)$.
\end{definition}

\vskip0.12in

The $L^2$-Pohozaev manifold $\mathcal{P}_{a,\mu}$ is quite related to the fibering maps
\begin{eqnarray*}
\Psi(u,s)=\frac{e^{2s}}{2}\|\nabla u\|_2^2-\frac{\mu e^{q\gamma_qs}}{q}\|u\|_q^q-\frac{e^{2^*s}}{2^*}\|u\|_{2^*}^{2^*},
\end{eqnarray*}
which is introduced by Jeanjean in \cite{J97} for the general problem~\eqref{eqn04} and is well studied by Soave in \cite{S20}.  According to the fibering maps $\Psi(u,s)$,
$\mathcal{P}_{a,\mu}$ can be naturally divided into the following three parts:
\begin{eqnarray*}
\mathcal{P}_+^{a,\mu}=\{u\in\mathcal{S}_a\mid 2\|\nabla u\|_2^2>\mu q\gamma^2_q\|u\|_q^q+2^*\|u\|_{2^*}^{2^*}\},\\
\mathcal{P}_0^{a,\mu}=\{u\in\mathcal{S}_a\mid 2\|\nabla u\|_2^2=\mu q\gamma^2_q\|u\|_q^q+2^*\|u\|_{2^*}^{2^*}\},\\
\mathcal{P}_-^{a,\mu}=\{u\in\mathcal{S}_a\mid 2\|\nabla u\|_2^2<\mu q\gamma^2_q\|u\|_q^q+2^*\|u\|_{2^*}^{2^*}\}.
\end{eqnarray*}
Let
\begin{eqnarray}\label{eq0045}
m_{a,\mu}^\pm=\inf_{u\in\mathcal{P}_\pm^{a,\mu}}\mathcal{E}_\mu(u),
\end{eqnarray}
then Soave proved the following results in \cite[Theorems~1.1 and 1.4]{S20}:
\begin{enumerate}
\item[$(1)$]\quad For $2<q<2+\frac{4}{N}$, there exists $\alpha_{N,q}>0$ such that if $\mu a^{q-q\gamma_q}<\alpha_{N,q}$ then
$m_{a,\mu}^+=\inf_{u\in\mathcal{P}_+^{a,\mu}}\mathcal{E}_\mu(u)=\inf_{u\in\mathcal{P}_{a,\mu}}\mathcal{E}_\mu(u)<0$
and it can be attained by some $u_{a,\mu,+}$ which is real valued, positive, radially symmetric and radially decreasing.  Moreover, \eqref{eq0001} has a ground state $(u_{a,\mu,+}, \lambda_{a,\mu,+})$ with $\lambda_{a,\mu,+}<0$, and $m_{a,\mu}^+\to0$ and $\|\nabla u_{a,\mu,+}\|_2\to0$ as $\mu\to0$.
\item[$(2)$]\quad For $2+\frac{4}{N}\leq q<2^*$, there exists $\alpha_{N,q}>0$ such that if $\mu a^{q-q\gamma_q}<\alpha_{N,q}$ then
$m_{a,\mu}^-=\inf_{u\in\mathcal{P}_-^{a,\mu}}\mathcal{E}_\mu(u)=\inf_{u\in\mathcal{P}_{a,\mu}}\mathcal{E}_\mu(u)\in(0, \frac{1}{N}S^{\frac{N}{2}})$
and it can be attained by some $u_{a,\mu,-}$ which is real valued, positive, radially symmetric and radially decreasing, where $S$ is the optimal constant in the Sobolev embedding, that is,
\begin{eqnarray}\label{eq0048}
\|u\|_{2^*}^2\leq S^{-1}\|\nabla u\|_2^2\quad\text{for all }u\in D^{1,2}(\bbr^N).
\end{eqnarray}
Moreover, \eqref{eq0001} has a ground state $(u_{a,\mu,-}, \lambda_{a,\mu,-})$ with $\lambda_{a,\mu,-}<0$, and $m_{a,\mu}^-\to\frac{1}{N}S^{\frac{N}{2}}$ and $\|\nabla u_{a,\mu,-}\|_2\to S^{\frac{N}{2}}$ as $\mu\to0$.
\end{enumerate}

\vskip0.12in

In the $L^2$-subcritical case $2<q<2+\frac{4}{N}$, since $\mathcal{E}_\mu(u)|_{\mathcal{S}_a}$ is unbounded from below, it could be naturally to expect that $\mathcal{E}_\mu(u)|_{\mathcal{S}_a}$ has a second critical point of mountain-pass type, which is also positive, real valued and radially symmetric.  This natural expectation has been pointed out by Soave in \cite[Remark~1.1]{S20} which can be summarized to be the following question:
\begin{enumerate}
\item[$(Q_1)$]\quad {\bf Does $\mathcal{E}_\mu(u)|_{\mathcal{S}_a}$ has a critical point of mountain-pass type in the $L^2$-subcritical case $2<q<2+\frac{4}{N}$?}
\end{enumerate}

\begin{remark}
In preparing this paper, we notice that in the very recent work \cite{JL201}, the question~$(Q_1)$ has been solved for $N\geq4$.  Thus, it only need to consider the case $N=3$ for the question~$(Q_1)$.
\end{remark}

\vskip0.12in

Besides, since Soave only considered the case that $\mu a^{q-q\gamma_q}>0$ small in \cite[Theorem~1.1]{S20}, it is also natural to ask what will happen if $\mu>0$ and $\mu a^{q-q\gamma_q}>0$ is large.  This natural question has been proposed by Soave in \cite{S20} as an open problem, which can be summarized to be the following one:
\begin{enumerate}
\item[$(Q_2)$]\quad {\bf Does $\mathcal{E}_\mu(u)|_{\mathcal{S}_a}$ have a ground state if $\mu>0$ and $\mu a^{q-q\gamma_q}>0$ large?}
\end{enumerate}
In \cite{S20}, Soave conjectures that the answer of $(Q_2)$ is {\it negative} in general.

\vskip0.12in

Finally, in these results, the asymptotic behavior is only for $\|\nabla u_{a,\mu,-}\|_2$ in the cases of $2+\frac{4}{N}\leq q<2^*$.  Thus, it is also natural to ask if it is possible to characterize the asymptotic behavior of $u_{a,\mu,-}$, and not only of $\|\nabla u_{a,\mu,-}\|_2$.  In \cite[Remark~8.1]{S20}, Soave pointed out that in dimensions $N=3,4$, it could be proved that $\|\nabla u_{a,\mu,-}\|_2\to S^{\frac{N}{2}}$, but $u_{a,\mu,-}\rightharpoonup0$ in $H^1$ while, in dimensions $H\geq5$, both $u_{a,\mu,-}\rightharpoonup0$ and $u_{a,\mu,-}\rightharpoonup \widetilde{u}\not=0$ could happen.  He also {\it conjectures} that the weak limit of $\{u_{a,\mu,-}\}$ will be the Aubin-Talanti babbles in the higher dimensions $N\geq5$.  Soave's conjecture can be slightly generalized to the following question:
\begin{enumerate}
\item[$(Q_3)$]\quad {\bf Can we capture the precisely asymptotic behavior of $u_{a,\mu,-}$ as $\mu\to0$?}
\end{enumerate}

In this paper, we are interested in these questions and we shall give some answers to them, which will give more information on the ground states of \eqref{eq0001}.   Our first result, which is devoted to the existence and nonexistence of ground states, can be stated as follows.
\begin{theorem}\label{thm0001}
Let $N\geq3$, $2<q<2^*$ and $a,\mu>0$.
\begin{enumerate}
\item[$(1)$]\quad If $N=3$ and $2<q<2+\frac{4}{N}$, then for $\mu a^{q-q\gamma_q}<\alpha_{N,q}$, $m_{a,\mu}^-$ can be attained by some $u_{a, \mu,-}$ which is real valued, positive, radially symmetric and radially decreasing, and thus, \eqref{eq0001} has a second solution $u_{a, \mu,-}$ with some $\lambda_{a, \mu,-}<0$.
\item[$(2)$]\quad If $q=2+\frac{4}{N}$, then $m_{a,\mu}^-$ can not be attained for $\mu a^{q-q\gamma_q}\geq\alpha_{N,q}$ and thus, \eqref{eq0001} has no ground states for $\mu a^{q-q\gamma_q}\geq\alpha_{N,q}$.
\item[$(3)$]\quad If $2+\frac{4}{N}<q<2^*$, then for all $\mu>0$,
and $m_{a,\mu}^-$ can be attained by some $u_{a, \mu,-}$ which is real valued, positive, radially symmetric and radially decreasing, and thus, \eqref{eq0001} has a ground state $u_{a, \mu,-}$ with some $\lambda_{a,\mu,-}<0$ for all $\mu>0$.
\end{enumerate}
\end{theorem}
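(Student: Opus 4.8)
The plan is to run all three parts through the fibering map $\Psi(u,s)=\mathcal E_\mu(s\star u)$, where $(s\star u)(x)=e^{Ns/2}u(e^sx)$ preserves the mass, together with the identity $\mathcal E_\mu(u)=\mu\big(\tfrac{\gamma_q}2-\tfrac1q\big)\|u\|_q^q+\tfrac1N\|u\|_{2^*}^{2^*}$ valid on $\mathcal P_{a,\mu}$. Since $q\gamma_q\ge 2$ whenever $q\ge 2+\tfrac4N$, a direct computation gives $\mathcal P_0^{a,\mu}=\mathcal P_+^{a,\mu}=\emptyset$, hence $\mathcal P_{a,\mu}=\mathcal P_-^{a,\mu}$ and $m_{a,\mu}^-=\inf_{\mathcal P_{a,\mu}}\mathcal E_\mu$ is the ground-state energy; moreover, for any nontrivial solution, subtracting its Pohozaev identity from its Nehari identity yields $-\lambda\|u\|_2^2=\mu(1-\gamma_q)\|u\|_q^q+\|u\|_{2^*}^{2^*}>0$ (as $\gamma_q<1$), so $\lambda<0$ automatically in parts $(1)$ and $(3)$.

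For part $(3)$ I would first note that, since $2<q\gamma_q<2^*$, the map $s\mapsto\Psi(u,s)$ has a unique critical point (a strict maximum on $\mathcal P_{a,\mu}$) for every $u\in\mathcal S_a$, so $m_{a,\mu}^-=\inf_{u\in\mathcal S_a}\max_s\Psi(u,s)$; because $\Psi(u,\cdot)$ is pointwise non-increasing in $\mu$, so is $\mu\mapsto m_{a,\mu}^-$. Combined with Soave's bound $m_{a,\mu}^-<\tfrac1N S^{N/2}$ valid for $\mu a^{q-q\gamma_q}<\alpha_{N,q}$, this monotonicity upgrades the bound to all $\mu>0$; one also checks $m_{a,\mu}^->0$ by Gagliardo--Nirenberg and Sobolev on $\mathcal P_{a,\mu}$. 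Given the strict gap $0<m_{a,\mu}^-<\tfrac1N S^{N/2}$, existence of a minimizer is routine: produce a Palais--Smale sequence for $\mathcal E_\mu|_{\mathcal S_a}$ at level $m_{a,\mu}^-$ lying asymptotically on $\mathcal P_{a,\mu}$ (via the fibering structure, as in \cite{BS17,S20}), make it radially decreasing by Schwarz rearrangement, note it is bounded in $H^1$ with $\lambda_n\to\lambda<0$, and split $u_n=u+v_n$ by Brezis--Lieb; any concentrating part of $v_n$ carries vanishing mass but energy $\ge\tfrac1N S^{N/2}$, while $u$ solves the equation and hence $\mathcal E_\mu(u)\ge0$, so the gap forces $v_n\to0$ strongly, $u\in\mathcal P_{a,\mu}$ attains $m_{a,\mu}^-$, and elliptic regularity plus the maximum principle give the stated qualitative properties.

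For part $(2)$, $q=2+\tfrac4N$ makes $q\gamma_q=2$, so $\partial_s\Psi(u,s)=0$ can be solved explicitly and gives
\[
m_{a,\mu}^-=\frac1N\inf\Big\{\frac{\big(\|\nabla u\|_2^2-\mu\gamma_q\|u\|_q^q\big)^{N/2}}{\big(\|u\|_{2^*}^{2^*}\big)^{(N-2)/2}}\ :\ u\in\mathcal S_a,\ \|\nabla u\|_2^2>\mu\gamma_q\|u\|_q^q\Big\},
\]
the admissible set being nonempty. Here $\alpha_{N,q}$ is precisely the sharp constant in $\mu\gamma_q\|u\|_q^q\le\|\nabla u\|_2^2$ on $\mathcal S_a$; when $\mu a^{q-q\gamma_q}\ge\alpha_{N,q}$ the quantity $\|\nabla u\|_2^2-\mu\gamma_q\|u\|_q^q$ is $\le0$ at the Gagliardo--Nirenberg optimizer $v\in\mathcal S_a$, so approaching $v$ along $\mathcal S_a$ through functions on which this quantity is positive but tends to $0$, with $\|\cdot\|_{2^*}$ staying bounded below, forces $m_{a,\mu}^-=0$. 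Since $\mathcal E_\mu(u)=\tfrac1N\|u\|_{2^*}^{2^*}>0$ on $\mathcal P_{a,\mu}$, the value $0$ is not attained and no nontrivial solution has energy $0$, whence \eqref{eq0001} has no ground state for $\mu a^{q-q\gamma_q}\ge\alpha_{N,q}$.

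Part $(1)$ is the substantive one. For $\mu a^{q-q\gamma_q}<\alpha_{N,q}$ and $2<q<2+\tfrac4N$, the fibering analysis of \cite{S20} supplies the local minimizer $u_{a,\mu,+}\in\mathcal P_+^{a,\mu}$ at level $m_{a,\mu}^+<0$ and exhibits $m_{a,\mu}^-$ as a mountain-pass level over paths joining $u_{a,\mu,+}$ to $\{\mathcal E_\mu<m_{a,\mu}^+\}$; in particular $m_{a,\mu}^->m_{a,\mu}^+$, so a minimizer for $m_{a,\mu}^-$ is a genuine second solution. The decisive step is the strict estimate $m_{a,\mu}^-<m_{a,\mu}^++\tfrac1N S^{N/2}$, which restores compactness of Palais--Smale sequences at that level. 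I would prove it with the explicit path $t\mapsto\widehat v_{\varepsilon,t}:=a\,(u_{a,\mu,+}+t\,\psi_\varepsilon)/\|u_{a,\mu,+}+t\,\psi_\varepsilon\|_2$, $\psi_\varepsilon$ a cut-off Aubin--Talenti bubble: it leaves $u_{a,\mu,+}$ at $t=0$, has a single peak near $t=1$, and (for fixed small $\varepsilon$) drops below $m_{a,\mu}^+$ as $t\to\infty$, so its peak value dominates $m_{a,\mu}^-$. The main obstacle, and the precise reason $N=3$ resists the argument used for $N\ge4$ in \cite{JL201}, is that in dimension three the $L^2$-renormalization factor, the gradient cross term $\langle\nabla u_{a,\mu,+},\nabla\psi_\varepsilon\rangle$ and the nonlinear cross terms are \emph{all} of order $\varepsilon^{1/2}$, so the $O(\varepsilon^{1/2})$ part of the expansion of $\mathcal E_\mu(\widehat v_{\varepsilon,t})$ must be computed exactly. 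Using the equation for $u_{a,\mu,+}$ one sees that the term proportional to $\lambda_{a,\mu,+}\langle u_{a,\mu,+},\psi_\varepsilon\rangle$ coming from the cross terms is cancelled precisely by the one produced by the mass renormalization, the surviving $O(\varepsilon^{1/2})$ contribution being the strictly negative $-t^{2^*-1}\int_{\mathbb R^3}\psi_\varepsilon^{2^*-1}u_{a,\mu,+}\sim-c\,\varepsilon^{1/2}$, which (because $q>2$) dominates all remaining errors; thus the peak value is $m_{a,\mu}^++\tfrac1N S^{N/2}-c\,\varepsilon^{1/2}+o(\varepsilon^{1/2})$. Granted the estimate, the compactness analysis (Brezis--Lieb splitting: a bubble carries vanishing mass but energy $\tfrac1N S^{N/2}$, while the weak limit $u$, nontrivial by radial compactness, solves the equation with mass $\le a^2$ and so has energy $\ge m_{a,\mu}^+$ by monotonicity in the mass, contradicting the gap), Schwarz symmetrization and the maximum principle yield the positive, radially decreasing $u_{a,\mu,-}$ with $\lambda_{a,\mu,-}<0$, distinct from $u_{a,\mu,+}$ since $m_{a,\mu}^->m_{a,\mu}^+$.
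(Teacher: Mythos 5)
Your proposal is correct and takes essentially the same route as the paper: all three parts rest on the fibering map, the monotonicity of $m_{a,\mu}^{-}$ in the mass and in $\mu$, Gagliardo--Nirenberg optimizing sequences driving $m_{a,\mu}^{-}$ to $0$ in the mass-critical case, and, for $N=3$, the strict bound $m_{a,\mu}^{-}<m_{a,\mu}^{+}+\tfrac13 S^{3/2}$ obtained from the radial superposition of $u_{a,\mu,+}$ with a cut-off Aubin--Talenti bubble, where the surviving $O(\varepsilon^{1/2})$ term is exactly the paper's $-t^{5}\int_{\mathbb R^3}u_{a,\mu,+}W_\varepsilon^{5}$. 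The only (cosmetic) differences are that you renormalize the mass multiplicatively rather than by dilation, so your cancellation uses the Nehari identity $\|\nabla u\|_2^2-\mu\|u\|_q^q-\|u\|_{2^*}^{2^*}=\lambda a^2$ in place of the paper's Pohozaev relation $\lambda a^2=\mu(\gamma_q-1)\|u\|_q^q$, and your displayed identity $-\lambda\|u\|_2^2=\mu(1-\gamma_q)\|u\|_q^q+\|u\|_{2^*}^{2^*}$ has a spurious $\|u\|_{2^*}^{2^*}$ term (the critical contributions cancel since $\gamma_{2^*}=1$), which does not affect the conclusion $\lambda<0$.
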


\begin{remark}
\begin{enumerate}
\item[$(a)$]\quad $(1)$ of Theorem~\ref{thm0001}, which together the results of \cite{JL201}, gives a completely positive answer to the question~$(Q_1)$.
\item[$(b)$]\quad As pointed out in the very recent work \cite{JL201}, the crucial point in studying $(Q_1)$ is to obtain a good energy estimate of $m_{a,\mu}^-$ for $2<q<2+\frac{4}{N}$ such that the compactness of minimizing sequence or $(PS)$ sequence at the energy level $m_{a,\mu}^-$ still holds.  As for other concave-convex problems (cf. \cite{ABC94}) and observed in \cite{JL201}, the threshold of such compactness should be $m_{a,\mu}^++\frac{1}{N}S^{\frac{N}{2}}$.  Since $m_{a,\mu}^-$ is a mountain-pass level, the classical idea, which can be traced back to \cite{BN83}, is to use the ground state $u_{a,\mu,+}$ and the Aubin-Talanti babbles to construct a good path, whose energy can be well controlled from above to make sure that it is smaller than the threshold $m_{a,\mu}^++\frac{1}{N}S^{\frac{N}{2}}$.   This strategy is already used in \cite{JL201} to prove the existence of critical points of $\mathcal{E}_\mu(u)|_{\mathcal{S}_a}$ of mountain-pass type for $N\geq4$ and $2<q<2+\frac{4}{N}$.  Unlike \cite{JL201} in which nonradial test function composing of $u_{a,\mu,+}$ and a bubble at $\infty$ is used, here we directly use the radial superposition of $ u_{a,\mu, +}$ and the Aubin-Talenti bubble. This test function seems to be more natural and it works for all dimensions.
\item[$(c)$]\quad $(2)$ and $(3)$ of Theorem~\ref{thm0001} give partial answers to $(Q_2)$ and they are proved by observing the non-increasing of $m_{a,\mu}^-$ and suitable choices of test functions.  These two conclusions imply that the $L^2$-critical and supercritical perturbations have quite different influence on \eqref{eq0001}.  Moreover, it seems that the critical mass of ground states also exists for \eqref{eq0001} in the $L^2$-critical case.
\end{enumerate}
\end{remark}

Our next result will be devoted to the precisely asymptotic behaviors of the solutions found in \cite[Theorem~1.1]{S20}, \cite[Theorem~1.6]{JL201} and Theorem~\ref{thm0001} as $\mu\to0$.
To state this result, let us first introduce some necessary notations.  By \cite[Theorem~B]{W83}, the Gagliardo-Nirenberg inequality,
\begin{eqnarray}\label{eq0059}
\|u\|_q\leq C_{N,q}\|u\|_2^{1-\gamma_q}\|\nabla u\|_2^{\gamma_q}\quad\text{for all }u\in H^1(\bbr^N),
\end{eqnarray}
has a minimizer $\phi_0$, which satisfies
\begin{eqnarray}\label{eqnew1010}
\left\{\aligned&-\Delta \phi_0+\nu_0\phi_0=\sigma_0\phi_0^{q-1}\quad\text{in }\bbr^N,\\
&\phi_0(0)=\max_{x\in\bbr^N}\phi_0(x),\\
&\phi_0(x)>0\quad\text{in }\bbr^N,\\
&\phi_0(x)\to0\quad\text{as }|x|\to+\infty,\endaligned\right.
\end{eqnarray}
where $\nu_0=\frac{4}{N(q-2)}(1-\frac{(q-2)(N-2)}{4})$, $\sigma_0=\frac{4}{N(q-2)}$ and $C_{N,q}$ is the best constant in the Gagliardo-Nirenberg inequality.  On the other hand, the Aubin-Talanti babbles,
\begin{eqnarray}\label{eq0095}
U_\ve(x)=[N(N-2)]^{\frac{N-2}{4}}\bigg(\frac{\ve}{\ve^2+|x|^2}\bigg)^{\frac{N-2}{2}},
\end{eqnarray}
is the only solutions to the following equation:
\begin{eqnarray*}
\left\{\aligned&-\Delta u=u^{2^*-1}\quad\text{in }\bbr^N,\\
&u(0)=\max_{x\in\bbr^N}u(x),\\
&u(x)>0\quad\text{in }\bbr^N,\\
&u(x)\to0\quad\text{as }|x|\to+\infty.
\endaligned\right.
\end{eqnarray*}
Now, our second result can be stated as follows.
\begin{theorem}\label{thm0003}
Let $N\geq3$, $2<q<2^*$ and $a,\mu>0$ such that $\mu>0$ is sufficiently small.  Let $\widetilde{u}_\mu$ be the minimizer of $\mathcal{E}_\mu(u)|_{\mathcal{S}_a}$ in $\mathcal{P}_+^{a,\mu}$ and $\widehat{u}_\mu$ be the minimizer of $\mathcal{E}_\mu(u)|_{\mathcal{S}_a}$ in $\mathcal{P}_-^{a,\mu}$.  Then
\begin{enumerate}
\item[$(1)$]\quad For $2<q<2+\frac{4}{N}$, $\widetilde{w}_{a,\mu}(x)=s_\mu^{\frac{N}{2}}\widetilde{u}_\mu(s_\mu x)\to \nu_a^{\frac{1}{q-2}}\phi_0(\sqrt{\nu_a}x)$ strongly in $H^1(\bbr^N)$ as $\mu\to0$, where $\phi_0$ is the unique solution of \eqref{eqnew1010},
    \begin{eqnarray}\label{eq1008}
    \nu_a=\bigg(\frac{a^2}{\|\phi_0\|_2^2}\bigg)^{\frac{2(q-2)}{4-N(q-2)}}.
    \end{eqnarray}
and $s_\mu\sim\mu^{\frac{1}{2-q\gamma_q}}$ is the unique solution of the following system:
\begin{eqnarray}\label{eq1009}
\left\{\aligned&s_\mu^2\|\nabla \psi_{\nu_a,1}\|_2^2-\mu\gamma_q\|\psi_{\nu_a,1}\|_q^qs_\mu^{q\gamma_q}-\|\psi_{\nu_a,1}\|_{2^*}^{2^*}s_\mu^{2^*}=0,\\
&2s_\mu^2\|\nabla \psi_{\nu_a,1}\|_2^2-\mu q\gamma_q^2\|\psi_{\nu_a,1}\|_q^qs_\mu^{q\gamma_q}-2^*\|\psi_{\nu_a,1}\|_{2^*}^{2^*}s_\mu^{2^*}>0,
\endaligned\right.
\end{eqnarray}
where $\psi_{\nu_a,1}(x)=\nu_a^{\frac{1}{q-2}}\phi_0(\sqrt{\nu_a}x)$.
Moreover, up to translations and rotations, $\widetilde{u}_\mu$ is the unique ground state of \eqref{eq0001} for $\mu>0$ sufficiently small.
\item[$(2)$]\quad For $N\geq5$, $\widehat{u}_\mu\to U_{\ve_0}$ strongly in $H^1(\bbr^N)$ as $\mu\to0$, where $U_{\ve_0}$ is the Aubin-Talanti babble satisfying $\|U_{\ve_0}\|_2^2=a^2$.  Moreover, up to translations and rotations, $\widehat{u}_\mu$ is the unique minimizer of $\mathcal{E}_\mu(u)|_{\mathcal{S}_a}$ in $\mathcal{P}_-^{a,\mu}$ for $\mu>0$ sufficiently small.
\item[$(3)$]\quad For $N=3,4$,
$\widehat{w}_{a,\mu}(x)=\ve_{\mu}^{\frac{N-2}{2}}\widehat{u}_\mu(\ve_{\mu} x)\to U_{\ve_0}$ strongly in $D^{1,2}(\bbr^N)$ for some $\ve_0>0$ as $\mu\to0$ up to a subsequence,
where $\ve_{\mu}$ satisfies
\begin{eqnarray*}
\mu\sim\left\{\aligned&\ve_{\mu}^{6-q}e^{-2\ve_{\mu}^{-2}},\quad N=4, 2<q<4,\\
&\ve_{\mu}^{\frac{q}{2}-1},\quad N=3, 3<q<6,\\
&\frac{\ve_{\mu}^{\frac{1}{2}}}{\ln(\frac{1}{\ve_\mu})},\quad N=3, q=3,\\
&\ve_\mu^{5-\frac{3q}{2}},\quad N=3, 2<q<3.\endaligned\right.
\end{eqnarray*}
\end{enumerate}
\end{theorem}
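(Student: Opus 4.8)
\emph{Plan and Part (1).} All three parts follow a common template: rescale the solution so as to freeze its concentration scale, extract a weak limit, identify the limit with the relevant limiting profile (the Gagliardo--Nirenberg optimiser $\phi_0$ in (1), an Aubin--Talenti bubble in (2)--(3)), and then upgrade weak to strong convergence by showing that the relevant norm of the rescaled solution converges, using the identities valid on $\mathcal P_{a,\mu}$. For (1), the starting facts are $m_{a,\mu}^+\to0^-$ and $\|\nabla\widetilde u_\mu\|_2\to0$; since $\widetilde u_\mu\in\mathcal P_{a,\mu}$ and $\mathcal E_\mu(\widetilde u_\mu)<0$, the Gagliardo--Nirenberg inequality \eqref{eq0059} forces in addition $\|\widetilde u_\mu\|_q\to0$ and $\|\widetilde u_\mu\|_{2^*}\to0$, so the minimiser spreads out. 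Put $\widetilde w_{a,\mu}(x)=s_\mu^{N/2}\widetilde u_\mu(s_\mu x)$ with $s_\mu$ the unique solution of \eqref{eq1009}: the point of this choice is that the dilation $s_\mu^{N/2}\psi_{\nu_a,1}(s_\mu\,\cdot)$ lies in $\mathcal P_+^{a,\mu}$, which yields both the sharp energy comparison $m_{a,\mu}^+\le\mathcal E_\mu\!\big(s_\mu^{N/2}\psi_{\nu_a,1}(s_\mu\,\cdot)\big)$ and the normalisation under which $\{\widetilde w_{a,\mu}\}$ is bounded and bounded away from $0$ in $H^1(\bbr^N)$. Since $\widetilde u_\mu$, hence $\widetilde w_{a,\mu}$, may be taken positive, radial and radially decreasing, along a subsequence $\widetilde w_{a,\mu}\rightharpoonup\psi$ in $H^1$, strongly in $L^r$ for $2<r<2^*$ and a.e.; the scale normalisation and the energy comparison with $\psi_{\nu_a,1}$ rule out vanishing, so $\psi\not\equiv0$. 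Letting $\mu\to0$ in the rescaled Euler--Lagrange equation, in which the Sobolev-critical term enters as a vanishing-coefficient perturbation, shows that $\psi$ solves $-\Delta\psi+\nu\psi=\sigma\psi^{q-1}$ in $\bbr^N$ for some $\nu,\sigma>0$ with $\|\psi\|_2\le a$ (cf. \eqref{eqnew1010}). By the known uniqueness (up to translation) of the positive solution of this equation --- equivalently of the optimiser of \eqref{eq0059} --- together with the energy level and the mass, $\psi=\psi_{\nu_a,1}$ and $\|\psi\|_2=a$; hence no mass escapes, the convergence is strong in $L^2$, and then strong in $H^1$ by the Pohozaev identity. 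Finally $\psi_{\nu_a,1}$ is non-degenerate in the radial class, so the implicit function theorem applied near $\psi_{\nu_a,1}$ to the equations for critical points of $\mathcal E_\mu|_{\mathcal S_a}$ shows that two ground states rescale to the same function for $\mu$ small; this is the claimed uniqueness up to translations and rotations.

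\emph{Part (2).} For $N\ge5$ one has $m_{a,\mu}^-\to\frac1N S^{\frac N2}$ and $\|\nabla\widehat u_\mu\|_2^2\to S^{\frac N2}$. Since $\|\widehat u_\mu\|_2=a$ and $\|\nabla\widehat u_\mu\|_2$ is bounded, $\mu\|\widehat u_\mu\|_q^q\to0$, so $\mathcal E_\mu(\widehat u_\mu)\to\frac1N S^{\frac N2}$ gives $\|\widehat u_\mu\|_{2^*}^{2^*}\to S^{\frac N2}$; hence $\widehat u_\mu$ asymptotically saturates the Sobolev inequality \eqref{eq0048}. Let $u_0$ be a weak $H^1$-limit of a subsequence; the multipliers $\lambda_\mu$ are bounded with limit $\lambda_0\le0$, and $u_0$ solves $-\Delta u_0=\lambda_0 u_0+|u_0|^{2^*-2}u_0$. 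If $u_0\not\equiv0$, the Pohozaev identity of this equation forces $\lambda_0=0$, so $u_0$ is an Aubin--Talenti bubble; radiality of $\widehat u_\mu$ excludes translation, and since $\|U_\varepsilon\|_2^2=c_N\varepsilon^2$ when $N\ge5$ the scale converges to the $\varepsilon_0$ with $c_N\varepsilon_0^2=a^2$. That $u_0\not\equiv0$ follows from the asymptotic saturation of \eqref{eq0048}: a radial near-optimiser of Sobolev whose scale stays bounded and bounded away from $0$ cannot vanish weakly without violating $\|\widehat u_\mu\|_2=a$ (a degenerate scale would push $\|\widehat u_\mu\|_2$ to $0$ or $+\infty$). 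Mass convergence and $\|\nabla\widehat u_\mu\|_2^2\to\|\nabla U_{\varepsilon_0}\|_2^2$ then give $\widehat u_\mu\to U_{\varepsilon_0}$ strongly in $H^1$. Uniqueness for small $\mu$ uses the non-degeneracy of $U_{\varepsilon_0}$ as a radial solution of the critical equation restricted to $\mathcal S_a$: in the kernel, spanned by the $\partial_{x_i}U_{\varepsilon_0}$ and by $\partial_\varepsilon U_\varepsilon|_{\varepsilon=\varepsilon_0}$, the translations are killed by radiality while the dilation is transversal to $\mathcal S_a$ (precisely because $\|U_\varepsilon\|_2^2$ is nonconstant in $\varepsilon$), so the implicit function theorem applies.

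\emph{Part (3).} For $N=3,4$ the Aubin--Talenti bubble is not in $L^2(\bbr^N)$, hence $\widehat u_\mu\rightharpoonup0$ in $H^1$ and the right object is the $D^{1,2}$-invariant rescaling $\widehat w_{a,\mu}(x)=\varepsilon_\mu^{(N-2)/2}\widehat u_\mu(\varepsilon_\mu x)$, with $\varepsilon_\mu$ the concentration scale. One first shows $\lambda_\mu\to0$ and that on scale $\varepsilon_\mu$ the solution is a small perturbation of a truncated bubble; then $\{\widehat w_{a,\mu}\}$ is bounded in $D^{1,2}(\bbr^N)$, a subsequence converges weakly there, the limit solves $-\Delta w=w^{2^*-1}$ (the other terms acquire vanishing coefficients after rescaling) so it is some $U_{\varepsilon_0}$, and the convergence is strong in $D^{1,2}$ because $\|\nabla\widehat w_{a,\mu}\|_2^2=\|\nabla\widehat u_\mu\|_2^2\to S^{\frac N2}=\|\nabla U_{\varepsilon_0}\|_2^2$ --- the passage to a subsequence and the unspecified $\varepsilon_0$ reflecting that $\varepsilon_0$ is not pinned a priori. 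The scale $\varepsilon_\mu$ is identified by a matched-asymptotic expansion: the constraint $\|\widehat u_\mu\|_2^2=a^2$ must balance the $L^2$-mass carried by the bubble out to the outer radius fixed by the competition between $\lambda_\mu u$, $\mu|u|^{q-2}u$ and $|u|^{2^*-2}u$; using refined pointwise bounds on $\widehat u_\mu$ and the fact that this truncated mass is of order $\varepsilon_\mu^{N-2}$ times a power of the outer radius --- a logarithm when $N=4$ --- one extracts exactly the four regimes in the statement.

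\emph{Main obstacle.} Throughout, the crux is compactness: excluding vanishing (and, where relevant, dichotomy) of the rescaled families, and controlling the interaction between the $L^2$-subcritical and the Sobolev-critical nonlinearities. In (1)--(2) this is manageable because radial monotonicity provides compact embeddings below $2^*$ and the energy levels lie below, resp.\ coincide with, the critical threshold $\frac1N S^{\frac N2}$. In (3) it is genuinely hard: identifying the rates is a Brezis--Nirenberg-type blow-up analysis, and it is precisely the quantification of how the linear term $\lambda_\mu u$ and the perturbation $\mu|u|^{q-2}u$ deform the $L^2$-tail of the concentrating bubble that produces the delicate, dimension- and exponent-dependent asymptotics for $\varepsilon_\mu$. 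The uniqueness statements in (1)--(2) rely on the (known) non-degeneracy of $\phi_0$ and of the Aubin--Talenti bubble, modulo the symmetries the $L^2$-constraint and radiality have not already factored out.
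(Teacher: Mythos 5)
Your Part (1) follows the paper's own route (Lemma~\ref{lemma0008} and Proposition~\ref{prop0004}): rescale by the $s_\mu$ of \eqref{eq1009}, use Strauss' radial lemma and the energy comparison with $(\psi_{\nu_a,1})_{s_\mu}$ to pass to the limit equation, identify the limit by Kwong's uniqueness plus the mass $a^2$, and prove local uniqueness by the implicit function theorem using the non-degeneracy of $\psi_{\nu_a,1}$; this is sound in outline. Part (2), however, has two genuine gaps. First, your argument that the weak limit $u_0$ is nonzero (``a degenerate scale would push $\|\widehat u_\mu\|_2$ to $0$ or $+\infty$'') is not valid: a radial, decreasing near-optimizer of \eqref{eq0048} whose core concentrates carries vanishing $L^2$-mass in the core and can store the mass $a^2$ in a diffuse tail that is invisible in every $L^p$ with $p>2$ --- this is exactly what happens for $N=3,4$, where $\widehat u_\mu\rightharpoonup 0$. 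The paper rules this out for $N\geq5$ only through the quantitative first-order expansion of $m_{a,\mu}^-$ (testing with $(U_{\ve_0})_{t(\mu)}$ and comparing with the Sobolev lower bound, i.e.\ \eqref{eq0030}--\eqref{eq0031}), which yields $\|u_{a,\mu,-}\|_q^q\geq\|U_{\ve_0}\|_q^q+o(1)$ and only then permits Lions' lemma. Second, your uniqueness argument via the implicit function theorem fails here: the linearization of the limit problem at $(U_{\ve_0},\lambda=0,\mu=0)$ is $\Delta+(2^*-1)U_{\ve_0}^{2^*-2}$ with \emph{no} mass term, which is not an isomorphism from $H^1_{rad}(\bbr^N)$ onto a suitable target (the paper explicitly flags in its remarks that the IFT route is ``invalid'' for $\widehat u_\mu$ precisely for this reason). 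The paper instead argues by contradiction, normalizing the difference of two minimizers, using injectivity of $\mathcal{L}(U_{\ve_0},0,0)$ together with maximum-principle decay estimates such as $|w_\mu|\lesssim r^{2-N}$ and $|w_\mu|\lesssim r^{2-\frac{\alpha+N}{2}}$ to reach an $L^2$-contradiction --- and, in Proposition~\ref{prop0006}, obtains this only for $N\geq9$.

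Part (3) as written is a program, not a proof. The crux --- which you correctly identify as ``genuinely hard'' but do not supply --- is the uniform pointwise bound $v_{a,\mu,-}\lesssim(1+r^2)^{-\frac{N-2}{2}}$, which the paper derives by the Atkinson--Peletier ODE comparison (the auxiliary functions $H(r)$ and $\Psi(r)$ in Lemma~\ref{lemn0001}), supplemented by two-sided exponential barriers at the outer scale $\bigl(\sqrt{|\lambda_{a,\mu,-}|}\,\sigma_\mu\bigr)^{-1}$ and, for $N=3$ and $2<q<3$, an iterated bootstrap because the algebraic decay of the nonlinearity is too slow. Without these estimates none of the four rate regimes can be extracted. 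Moreover, the mechanism you propose for pinning $\ve_\mu$ (balancing the $L^2$-mass of the truncated bubble against the constraint) is not how the paper proceeds: there, $\ve_\mu$ is chosen to maximize the lower bound for $\|u_{a,\mu,-}\|_q^q$ coming from the energy comparison with the truncated bubbles $V_\ve$ of \eqref{eq0069}, and the identification $\sigma_\mu\sim\ve_\mu$ of the true concentration scale with this optimal test scale is then proved separately in Proposition~\ref{prop0009} using the pointwise bounds. So while your overall blow-up framework is the right one, the quantitative heart of Part (3) and the uniqueness argument of Part (2) are missing or would fail as proposed.
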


\begin{remark}
\begin{enumerate}
\item[$(1)$]\quad The precise asymptotic behaviors of $\widetilde{u}_\mu$ and $\widehat{u}_\mu$ stated in $(1)$ and $(2)$ of Theorem~\ref{thm0003} are captured by comparing the energy values and norms by full using the variational formulas of $\widetilde{u}_\mu$ and $\widehat{u}_\mu$, and minimizers of the Gagliardo-Nirenberg inequality and the Aubin-Talanti bubbles.  In this argument, the unique determination of minimizers of the Gagliardo--Nirenberg inequality~\eqref{eq0059} for $2<q<2+\frac{4}{N}$ and Aubin-Talanti bubbles for $N\geq5$ in $\mathcal{S}_a$, respectively, is crucial.  Moreover, $(2)$ of Theorem~\ref{thm0003} also gives a positive answer to Soave's conjecture on $(Q_3)$.
\item[$(2)$]\quad For the local uniqueness, the standard strategy is to assume the contrary and obtain a contradiction by full using the non-degeneracy of minimizers of the Gagliardo--Nirenberg inequality and Aubin-Talanti bubbles in passing to the limit (cf. \cite{GLWZ19,DLY16}), which is powerful in studying problems with potentials.  Since \eqref{eq0001} is autonomous, we can use a different method, based on the precisely asymptotic behaviors of $\widetilde{u}_\mu$ and the implicit function theorem, to prove the local uniqueness of $\widetilde{u}_\mu$ in a more direct way.  It is worth pointing out that our method is also based on the non-degenerate of minimizers of the Gagliardo--Nirenberg inequality.  For $\widehat{u}_\mu$, we remark that since the linear operator of the limit equation is different from that of \eqref{eq0001}, our direct methods, based on implicit function theorem, is invalid.  Thus, we will still use the standard method, that is to assume the contrary and obtain a contradiction by full using the non-degeneracy of Aubin-Talanti bubbles.
\item[$(3)$]\quad Since we loss the $L^2$-integrability of the Aubin-Talanti babbles $\{U_\ve\}$ for $N=3,4$, the asymptotic behavior of $\widehat{u}_\mu$ as $\mu\to0$ for $N=3,4$ is much weaker than that of $N\geq5$ in the sense that, the convergence is only for subsequences, which also leads us to loss the local uniqueness of $\widehat{u}_\mu$ for $\mu>0$ sufficiently small in these two cases.  We also remark that since we loss the $L^2$-integrability of the Aubin-Talanti babbles $\{U_\ve\}$ for $N=3,4$, the asymptotic behavior of $\widehat{u}_\mu$ can not be obtained by merely using variational arguments to compare the energy values and norms as that for $(2)$ of Theorem~\ref{thm0003}.  Thus, to capture the precisely asymptotic behavior of $\widehat{u}_\mu$, we drive some uniformly pointwise estimates of $\widehat{u}_\mu$ by the maximum principle (cf. \cite{DPG13}) and some ODE technique used in \cite{AP86} (see also \cite{GS03,KP89}).  With these additional estimates, we obtain the precisely asymptotic behavior of $\widehat{u}_\mu$ for $N=3,4$.  It is worth pointing out that, in the case $N=3$ and $2<q<3$, since the nonlinearity decays too slow at infinity, we need to further employ the bootstrapping argument to drive the desired estimates.
\end{enumerate}
\end{remark}

Our final result is devoted to the asymptotic behavior of the minimizers of $\mathcal{E}_\mu(u)|_{\mathcal{S}_a}$ in $\mathcal{P}_-^{a,\mu}$ as $\mu$ close to its upper-bound in the cases of $2+\frac{4}{N}\leq q<2^*$.  It can be stated as follows.
\begin{theorem}\label{thm0002}
Assume $N\geq3$, $2+\frac{4}{N}\leq q<2^*$ and $\mu,a>0$.  Let $\widehat{u}_\mu$ be the minimizer of $\mathcal{E}_\mu(u)|_{\mathcal{S}_a}$ in $\mathcal{P}_-^{a,\mu}$, found in \cite[Theorem~1.1]{S20} for $q=2+\frac{4}{N}$ with $0<\mu a^{q-q\gamma_q}<\alpha_{N,q}$ and found in Theorem~\ref{thm0001} for $2+\frac{4}{N}<q<2^*$ with all $\mu>0$.  Then
\begin{enumerate}
\item[$(1)$]\quad  For $q=2+\frac{4}{N}$, $\widehat{v}_\mu=(\frac{a}{\|\phi_0\|_2})^{\frac{N-2}{2}}s_\mu^{\frac{N}{2}}\widehat{u}_\mu(\frac{a}{\|\phi_0\|_2}s_\mu x)\to (\nu_a')^{\frac{1}{q-2}}\phi_0(\sqrt{\nu_a'}x)$ strongly in $H^1(\bbr^N)$ as $\mu\to \alpha_{N,q,a}$ up to a subsequence, where $\alpha_{N,q,a}=a^{q\gamma_q-q}\alpha_{N,q}$ for some $\nu_a'>0$ and $s_\mu=(1-\frac{\mu}{\alpha_{N,q,a}})^{-\frac{N-2}{4}}$.
\item[$(2)$]\quad For $2+\frac{4}{N}<q<2^*$, $\widehat{v}_\mu=s_\mu^{\frac{N}{2}}\widehat{u}_\mu(s_\mu x)\to \nu_a^{\frac{1}{q-2}}\phi_0(\sqrt{\nu_a}x)$ strongly in $H^1(\bbr^N)$ as $\mu\to +\infty$, where $s_\mu=\mu^{\frac{1}{q\gamma_q-2}}$.  Moreover, up to translations and rotations, $\widehat{u}_\mu$ is also the unique ground state of \eqref{eq0001} for $\mu>0$ sufficiently large.
\end{enumerate}
\end{theorem}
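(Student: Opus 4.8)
The plan is to show that, as $\mu$ approaches its upper bound, the Sobolev--critical term in $\mathcal{E}_\mu$ becomes negligible after the rescaling prescribed in the statement, so that the rescaled minimizers converge to a Gagliardo--Nirenberg extremal in $\mathcal{S}_a$. The starting point is that for $q\geq2+\frac4N$ one has $\mathcal{P}_{a,\mu}\subset\mathcal{P}_-^{a,\mu}$ (substituting the Pohozaev identity $\|\nabla u\|_2^2=\mu\gamma_q\|u\|_q^q+\|u\|_{2^*}^{2^*}$ into the defining inequalities of $\mathcal{P}_0^{a,\mu}$ and $\mathcal{P}_+^{a,\mu}$ forces $\mu\gamma_q(2-q\gamma_q)\|u\|_q^q\geq(2^*-2)\|u\|_{2^*}^{2^*}$, which is impossible for $u\neq0$), and that on $\mathcal{P}_{a,\mu}$ one has the two identities
\begin{eqnarray*}
\mathcal{E}_\mu(u)=\frac{q\gamma_q-2}{2q}\,\mu\|u\|_q^q+\frac1N\|u\|_{2^*}^{2^*}=\frac{q\gamma_q-2}{2q\gamma_q}\|\nabla u\|_2^2+\Big(\frac1{q\gamma_q}-\frac1{2^*}\Big)\|u\|_{2^*}^{2^*},
\end{eqnarray*}
which in the $L^2$--critical case $q=2+\frac4N$ (where $q\gamma_q=2$) both reduce to $\mathcal{E}_\mu(u)=\frac1N\|u\|_{2^*}^{2^*}$. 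Since $\widehat u_\mu$ solves \eqref{eq0001}, these hold for $u=\widehat u_\mu$.

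First I would prove two--sided bounds on $\|\nabla\widehat u_\mu\|_2$. For the upper energy estimate, take a Gagliardo--Nirenberg extremal scaled to lie in $\mathcal{S}_a$ (namely $\psi_{\nu_a,1}$ in case~(2), and $(a/\|\phi_0\|_2)\,\nu^{\frac N4}\phi_0(\sqrt\nu\,\cdot)$ for any $\nu>0$ in case~(1), whose $\mathcal{E}_\mu$--value turns out to be independent of $\nu$ because $q\gamma_q=2$), dilate it by $u_s(x)=s^{\frac N2}u(sx)$, and choose the unique $s=s_\mu$ for which $u_{s_\mu}\in\mathcal{P}_-^{a,\mu}$ via the fibering map $\Psi$ of Soave; analysing the resulting one--variable equation should give $s_\mu\sim\mu^{\frac1{q\gamma_q-2}}$ with $m_{a,\mu}^-\leq\mathcal{E}_\mu(u_{s_\mu})\leq C\mu^{-\frac2{q\gamma_q-2}}$ in case~(2), and $s_\mu=(1-\mu/\alpha_{N,q,a})^{-\frac{N-2}4}$ with $m_{a,\mu}^-\leq C(1-\mu/\alpha_{N,q,a})^{\frac N2}$ in case~(1). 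Combining these upper bounds with the identities above, the Gagliardo--Nirenberg inequality~\eqref{eq0059} (which, when $q=2+\frac4N$, yields $\mu\gamma_q\|\widehat u_\mu\|_q^q\leq(\mu/\alpha_{N,q,a})\|\nabla\widehat u_\mu\|_2^2$), the Sobolev inequality~\eqref{eq0048}, and $\widehat u_\mu\in\mathcal{S}_a$, I expect matching two--sided bounds on $\|\nabla\widehat u_\mu\|_2$. In terms of the rescaling $\widehat v_\mu$ of the statement this reads $0<c\leq\|\nabla\widehat v_\mu\|_2^2\leq C$, with $\|\widehat v_\mu\|_2$ equal to a fixed constant ($a$ in case~(2), $\|\phi_0\|_2$ in case~(1)) and the rescaled critical term $s_\mu^{2-2^*}\|\widehat v_\mu\|_{2^*}^{2^*}=o(1)$.

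Next I would pass to the limit. As $\widehat v_\mu$ is radial, the compactness of the embedding of radial $H^1$--functions into $L^q(\bbr^N)$ gives, along a subsequence, $\widehat v_\mu\rightharpoonup v_0$ in $H^1(\bbr^N)$ and $\widehat v_\mu\to v_0$ in $L^q(\bbr^N)$. Rescaling the Euler--Lagrange equation of $\widehat u_\mu$ turns it into $-\Delta\widehat v_\mu=\widetilde\lambda_\mu\widehat v_\mu+c_\mu|\widehat v_\mu|^{q-2}\widehat v_\mu+s_\mu^{2-2^*}|\widehat v_\mu|^{2^*-2}\widehat v_\mu$ with $c_\mu\to c_\infty>0$; the Nehari and Pohozaev identities give $\widetilde\lambda_\mu=s_\mu^2\lambda_\mu=-(1-\gamma_q)\|\widehat v_\mu\|_q^q/\|\widehat v_\mu\|_2^2$ (up to the fixed normalization), which is negative and converges to a finite limit $\widetilde\lambda_0$, negative and nonzero once $v_0\neq0$. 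The nontriviality of $v_0$ follows because $v_0=0$ would force $\|\widehat v_\mu\|_q\to0$, hence (testing the equation with $\widehat v_\mu$ and using $s_\mu^{2-2^*}\|\widehat v_\mu\|_{2^*}^{2^*}\to0$) $\|\nabla\widehat v_\mu\|_2\to0$, contradicting the lower bound of the previous step. Letting $\mu$ tend to its limit in the weak formulation, $v_0$ is a positive, radial, radially decreasing solution of a rescaled version of \eqref{eqnew1010} with $\|v_0\|_2$ the fixed constant above; by the uniqueness of such solutions and the scaling relations for $\phi_0$ this identifies $v_0=\nu_a^{\frac1{q-2}}\phi_0(\sqrt{\nu_a}\,x)$ in case~(2) and $v_0=(\nu_a')^{\frac1{q-2}}\phi_0(\sqrt{\nu_a'}\,x)$ for the value $\nu_a'>0$ determined by $\widetilde\lambda_0$ in case~(1), the choice of $s_\mu$ being exactly what fixes the dilation parameter of the limit.

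Finally I would upgrade to strong convergence and settle uniqueness. Testing the rescaled equation with $\widehat v_\mu$ and the limit equation with $v_0$ yields $\|\nabla\widehat v_\mu\|_2^2\to\|\nabla v_0\|_2^2$, which together with $\|\widehat v_\mu\|_2\to\|v_0\|_2$ gives $\widehat v_\mu\to v_0$ strongly in $H^1(\bbr^N)$. In case~(2) the limit $\nu_a^{\frac1{q-2}}\phi_0(\sqrt{\nu_a}\,x)$ is uniquely determined (no scaling degeneracy, since $q>2+\frac4N$), so the convergence holds along the whole family $\mu\to+\infty$; moreover, arguing as in the proof of Theorem~\ref{thm0003}(1)---linearizing \eqref{eq0001} at the rescaled limit, using the non--degeneracy of $\phi_0$ from \cite{W83}, and invoking the implicit function theorem---one gets that, up to translations and rotations, $\widehat u_\mu$ is the unique ground state of \eqref{eq0001} for $\mu$ large. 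In case~(1), the $L^2$--critical invariance of the Gagliardo--Nirenberg extremals leaves $\nu_a'$ undetermined by the normalization, which is precisely why the convergence is only along subsequences. I expect the main obstacle to be the estimate of the previous paragraph: choosing the rescaling so that $\|\nabla\widehat v_\mu\|_2$ stays bounded \emph{and} bounded away from $0$---equivalently, producing an energy upper bound for $m_{a,\mu}^-$ of exactly the order $(1-\mu/\alpha_{N,q,a})^{N/2}$ in case~(1) and $\mu^{-2/(q\gamma_q-2)}$ in case~(2)---together with handling the scaling degeneracy when $q=2+\frac4N$.
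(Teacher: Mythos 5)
Your proposal is correct and follows essentially the same route as the paper's proof: an upper bound on $m_{a,\mu}^-$ from a dilated Gagliardo--Nirenberg extremal in $\mathcal{S}_a$, a matching lower bound on $\|\nabla\widehat u_\mu\|_2$ from the Pohozaev constraint together with the Gagliardo--Nirenberg and Sobolev inequalities, the rescaling $s_\mu$ chosen to normalize these bounds, radial compactness and the limit equation plus Kwong's uniqueness to identify the limit (with the $L^2$-critical scaling degeneracy causing the loss of full-family convergence in case (1)), and local uniqueness in case (2) via nondegeneracy and the implicit function theorem as in Proposition~\ref{prop0004}. The only cosmetic difference is that the paper also tracks the precise leading constants in $\|\nabla\widehat u_\mu\|_2^2$ and $\lambda_{\mu,-}$, which your sketch does not need for the stated convergence.
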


\begin{remark}
\begin{enumerate}
\item The ideas in proving Theorem~\ref{thm0002} are similar to that of Theorem~\ref{thm0003}.  However, in the $L^2$-critical case $q=2+\frac{4}{N}$, the convergence of $\widehat{u}_\mu$ is much weaker than that in the $L^2$-supcritical case $2+\frac{4}{N}<q<2^*$ in the sense that, it only holds for subsequences.  The main reason is that in the $L^2$-critical case $q=2+\frac{4}{N}$, we have $\|\varphi\|_2^2=const.$ for all $\varphi$ being a minimizer of the Gagliardo--Nirenberg inequality~\eqref{eq0059}.  Thus, the precise mass $\|\widehat{u}_\mu\|_2^2=a^2$ is invalid in determining a unique minimizer of the Gagliardo--Nirenberg inequality~\eqref{eq0059} in the case $q=2+\frac{4}{N}$.  Moreover, unlike the studies for problems with homogeneous nonlinearities (cf. \cite{GLW17,GLWZ19}), combining nonlinearities ($L^2$-critical and $L^2$-supercritical) of \eqref{eq0001} makes the asymptotic behavior of $\widehat{u}_\mu$ to be more complicated, which also make us loss the local uniqueness of $\widehat{u}_\mu$ for $\mu>0$ close to its upper bound in this case.  Indeed, as $\mu$ goes to its upper bound in the $L^2$-critical case, comparing with the studies for problems with homogeneous nonlinearities, the Sobolev critical term of \eqref{eq0001} is an additionally inhomogenous perturbation in passing to the limit, which makes the oscillations occurring.
\end{enumerate}
\end{remark}

\noindent{\bf\large Notations.} Throughout this paper, $C$ and $C'$ are indiscriminately used to denote various absolutely positive constants.  $a\sim b$ means that $C'b\leq a\leq Cb$ and $a\lesssim b$ means that $a\leq Cb$.

\section{Asymptotic behavior of $u_{a,\mu,+}$}
By \cite[Theorem~1.1]{S20}, $m_{a,\mu}^+$  can always be attained by some $u_{a,\mu,+}$ for $2<q<2+\frac{4}{N}$ and $\mu a^{q-q\gamma_q}<\alpha_{N,q}$, where $m_{a,\mu}^+$ is given by \eqref{eq0045} and $u_{a,\mu,+}$ is real valued, positive, radially symmetric and radially decreasing.  Our goal in this section is to give an asymptotic behavior of $u_{a,\mu,+}$ as $\mu\to0$, which is more precisely than that in \cite[Theorem~1.4]{S20}, and capture the precisely decaying rate of $u_{a,\mu,+}$ as $\mu\to0$.  We recall that by \cite[Theorem~1.1]{S20}, $u_{a,\mu,+}$ is a solution of \eqref{eq0001} for some $\lambda_{a,\mu,+}<0$.  To simplify the notation, we shall denote $u_{\mu,+}=u_{a,\mu,+}$ and $\lambda_{\mu,+}=\lambda_{a,\mu,+}$, since we will fix $a>0$ in what follows.  Let us begin with
\begin{lemma}\label{lemma0008}
Let $2<q<2+\frac{4}{N}$.  Then $-\lambda_{\mu,+}\sim\|\nabla u_{\mu,+}\|_2^2\sim\mu^{\frac{2}{2-q\gamma_q}}$ as $\mu\to0$.
\end{lemma}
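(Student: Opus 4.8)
The plan is to pin down all three quantities from two elementary identities for $u_{\mu,+}$, together with sharp two-sided bounds on the energy level $m_{a,\mu}^+$; throughout, the Sobolev-critical contributions will turn out to be of lower order, precisely because \cite[Theorem~1.4]{S20} already provides the qualitative information $\|\nabla u_{\mu,+}\|_2\to0$ as $\mu\to0$. First I would record the two identities. Testing the equation $-\Delta u_{\mu,+}=\lambda_{\mu,+}u_{\mu,+}+\mu|u_{\mu,+}|^{q-2}u_{\mu,+}+|u_{\mu,+}|^{2^*-2}u_{\mu,+}$ against $u_{\mu,+}$ and using $\|u_{\mu,+}\|_2^2=a^2$ gives $\|\nabla u_{\mu,+}\|_2^2=\lambda_{\mu,+}a^2+\mu\|u_{\mu,+}\|_q^q+\|u_{\mu,+}\|_{2^*}^{2^*}$, while $u_{\mu,+}\in\mathcal{P}_{a,\mu}$ gives $\|\nabla u_{\mu,+}\|_2^2=\mu\gamma_q\|u_{\mu,+}\|_q^q+\|u_{\mu,+}\|_{2^*}^{2^*}$. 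Subtracting these, and separately eliminating $\|u_{\mu,+}\|_{2^*}^{2^*}$ from $\mathcal{E}_\mu(u_{\mu,+})=m_{a,\mu}^+$, yields
\begin{eqnarray*}
&&-\lambda_{\mu,+}a^2=\mu(1-\gamma_q)\|u_{\mu,+}\|_q^q,\\
&&m_{a,\mu}^+=\frac{1}{N}\|\nabla u_{\mu,+}\|_2^2-\mu\,\frac{4-(q-2)(N-2)}{4q}\,\|u_{\mu,+}\|_q^q.
\end{eqnarray*}
Both constants $1-\gamma_q$ and $4-(q-2)(N-2)$ are positive for $2<q<2^*$, and in the present range $0<q\gamma_q<2$, so the exponent $\frac{2}{2-q\gamma_q}$ is well defined and positive.

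For the upper bounds I would combine the Pohozaev identity with the Sobolev inequality $\|u_{\mu,+}\|_{2^*}^{2^*}\le S^{-\frac{N}{N-2}}\|\nabla u_{\mu,+}\|_2^{2^*}$ and the Gagliardo--Nirenberg inequality $\|u_{\mu,+}\|_q^q\le C_{N,q}^q\,a^{q-q\gamma_q}\|\nabla u_{\mu,+}\|_2^{q\gamma_q}$. Since $q\gamma_q<2<2^*$ and $\|\nabla u_{\mu,+}\|_2\to0$, the Sobolev term is of lower order and gets absorbed, leaving $\|\nabla u_{\mu,+}\|_2^{2-q\gamma_q}\lesssim\mu$, i.e. $\|\nabla u_{\mu,+}\|_2^2\lesssim\mu^{\frac{2}{2-q\gamma_q}}$. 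Reinserting this in Gagliardo--Nirenberg gives $\|u_{\mu,+}\|_q^q\lesssim\mu^{\frac{q\gamma_q}{2-q\gamma_q}}$, and then the first identity above yields $-\lambda_{\mu,+}\lesssim\mu^{\frac{2}{2-q\gamma_q}}$.

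For the matching lower bounds I would exhibit a single competitor in $\mathcal{P}_{a,\mu}$ with small energy. Fix a radial $v\in\mathcal{S}_a$ and dilate, $v_s(x)=s^{N/2}v(sx)\in\mathcal{S}_a$; then $v_s\in\mathcal{P}_{a,\mu}$ amounts to $s^2\|\nabla v\|_2^2=\mu\gamma_q s^{q\gamma_q}\|v\|_q^q+s^{2^*}\|v\|_{2^*}^{2^*}$, which for small $\mu$ has a smallest root $s_\mu\sim\mu^{\frac{1}{2-q\gamma_q}}\to0$ sitting on the local-minimum branch of the fibering map, so that $v_{s_\mu}\in\mathcal{P}_+^{a,\mu}$ and $m_{a,\mu}^+\le\mathcal{E}_\mu(v_{s_\mu})$. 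Applying the second identity above to $v_{s_\mu}$ and using the defining relation for $s_\mu$ gives $\mathcal{E}_\mu(v_{s_\mu})=-\mu s_\mu^{q\gamma_q}\|v\|_q^q\,\frac{4-N(q-2)}{4q}+\frac{1}{N}s_\mu^{2^*}\|v\|_{2^*}^{2^*}$, whose leading negative term is of order $\mu^{\frac{2}{2-q\gamma_q}}$ while the second is $o(\mu^{\frac{2}{2-q\gamma_q}})$; hence $m_{a,\mu}^+\le-c\,\mu^{\frac{2}{2-q\gamma_q}}$ for $\mu$ small. Plugging this into the energy identity for $u_{\mu,+}$ forces $\|u_{\mu,+}\|_q^q\gtrsim\mu^{\frac{q\gamma_q}{2-q\gamma_q}}$, then Gagliardo--Nirenberg forces $\|\nabla u_{\mu,+}\|_2^2\gtrsim\mu^{\frac{2}{2-q\gamma_q}}$, and the first identity forces $-\lambda_{\mu,+}\gtrsim\mu^{\frac{2}{2-q\gamma_q}}$. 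Together with the upper bounds, this is exactly the claim.

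The main obstacle is bookkeeping rather than conceptual: one must check at \emph{every} step that the Sobolev-critical terms are genuinely of lower order — which is where the qualitative input $\|\nabla u_{\mu,+}\|_2\to0$ from \cite[Theorem~1.4]{S20} is indispensable — and verify that the scale $s_\mu$ solving the Pohozaev relation for the fixed profile $v$ lies on the local-minimum branch with $s_\mu\sim\mu^{\frac{1}{2-q\gamma_q}}$, which reduces to a routine one-variable analysis of $s\mapsto\mathcal{E}_\mu(v_s)$ for small $\mu$ along the lines of \cite{S20}.
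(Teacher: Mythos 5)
Your proof is correct and follows essentially the same route as the paper: an upper bound for $\|\nabla u_{\mu,+}\|_2$ from the Pohozaev constraint plus Gagliardo--Nirenberg, a matching lower bound obtained by scaling a fixed profile onto $\mathcal{P}_+^{a,\mu}$ at the scale $s_\mu\sim\mu^{\frac{1}{2-q\gamma_q}}$ and comparing energies, and then a transfer of the two-sided bounds to $\|u_{\mu,+}\|_q^q$ and $\lambda_{\mu,+}$ via the Nehari and Pohozaev identities. The only cosmetic differences are that the paper absorbs the critical term using the strict inequality defining $\mathcal{P}_+^{a,\mu}$ (which yields $(2^*-2)\|\nabla u_{\mu,+}\|_2^2<\mu(2^*-q\gamma_q)\gamma_q\|u_{\mu,+}\|_q^q$) instead of the cited convergence $\|\nabla u_{\mu,+}\|_2\to0$, and that it takes a truncated Aubin--Talenti bubble as the competitor where, as you observe, any fixed $v\in\mathcal{S}_a$ suffices.
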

\begin{proof}
Since $u_{\mu,+}\in\mathcal{P}_+^{a,\mu}$, we have
\begin{eqnarray}\label{eq0020}
\|\nabla u_{\mu,+}\|_2^2=\mu\gamma_q\|u_{\mu,+}\|_q^q+\|u_{\mu,+}\|_{2^*}^{2^*}
\end{eqnarray}
and
\begin{eqnarray*}
2\|\nabla u_{\mu,+}\|_2^2>\mu q\gamma_q^2\|u_{\mu,+}\|_q^q+2^*\|u_{\mu,+}\|_{2^*}^{2^*}.
\end{eqnarray*}
It follows from the Gagliardo--Nirenberg inequality that
\begin{eqnarray*}
\|\nabla u_{\mu,+}\|_2^2\lesssim\mu\|u_{\mu,+}\|_q^q \lesssim\mu\|\nabla u_{\mu,+}\|_2^{q\gamma_q},
\end{eqnarray*}
which together with $q\gamma_q<2$ for $2<q<2+\frac{4}{N}$, implies
\begin{eqnarray}\label{eq0021}
\|\nabla u_{\mu,+}\|_2^2\lesssim\mu^{\frac{2}{2-q\gamma_q}}.
\end{eqnarray}
Thus, by \eqref{eq0020} and \eqref{eq0021}, we also have
\begin{eqnarray}\label{eq0050}
\mu\|u_{\mu,+}\|_q^q\lesssim\mu^{\frac{2}{2-q\gamma_q}}.
\end{eqnarray}
Let us define
\begin{eqnarray}\label{eq0069}
V_{\ve}(x)=U_\ve(x)\varphi(R_\ve^{-1}x)
\end{eqnarray}
where $U_\varepsilon(x)$ is the Aubin-Talanti babbles given by \eqref{eq0095} and
$\varphi\in C_0^\infty(\mathbb{R}^N)$ is a radial cut-off function with $\varphi\equiv 1$ in $B_1$, $\varphi\equiv 0$ in $B_2^c$, and $R_\ve$ is chosen such that $V_{\ve}\in\mathcal{S}_a$.  More precisely, for $N\geq5$, we choose $\ve=\ve_0$ and $R_{\ve_0}=+\infty$ such that $V_{\ve_0}=U_{\ve_0}\in\mathcal{S}_a$ while for $N=3,4$, we choose $\ve>0$ sufficiently small and then in the later two cases, we have
\begin{eqnarray}\label{eq0023}
a^2=\int_{\bbr^N}(U_\ve(x)\varphi(R_\ve^{-1}x))^2\sim\ve^{2}\int_1^{R_\ve\ve^{-1}}r^{3-N}\sim\left\{\aligned\ve^{2}\ln(R_\ve\ve^{-1}), \text{ for }N=4,\\
\ve R_\ve, \text{ for }N=3,
\endaligned\right.
\end{eqnarray}
which implies $R_\ve\ve^{-1}\to+\infty$ as $\ve\to0$.
Then it is well-known (cf. \cite[(4.2)--(4.5)]{MM14} or \cite[Chapter III]{S00}) that
\begin{eqnarray}
\|\nabla V_{\ve}\|_2^2=S^{\frac{N}{2}}+O((R_\ve\ve^{-1})^{2-N}),\quad \|V_{\ve}\|_{2^*}^{2^*}=S^{\frac{N}{2}}+O((R_\ve\ve^{-1})^{-N})\label{eq0036}
\end{eqnarray}
for $\ve>0$ sufficiently small, which implies
\begin{eqnarray}\label{eq0022}
\|\nabla V_{\ve}\|_2^2\sim S^{\frac{N}{2}}\sim\|V_{\ve}\|_{2^*}^{2^*}
\end{eqnarray}
for $\ve>0$ sufficiently small.  Now, we fix $\ve=\ve_0$ and $R_{\ve_0}=+\infty$ for $N=5$, and fix $\ve>0$ sufficiently small and choose $R_\ve$ as that in \eqref{eq0023} for $N=3,4$ such that \eqref{eq0022} holds for all $N\geq3$.  By \cite[Lemma~4.2]{S20}, there exists $t(\mu)>0$ such that $(V_{\ve})_{t(\mu)}\in\mathcal{P}_+^{a,\mu}$  for $\mu>0$ sufficiently small, where
\begin{eqnarray*}
(V_{\ve})_{t(\mu)}=[t(\mu)]^{\frac{N}{2}}V_{\ve}(t(\mu)x).
\end{eqnarray*}
Then
\begin{eqnarray*}
[t(\mu)]^2\|\nabla V_\ve\|_2^2=\mu\gamma_q\|V_\ve\|_q^q[t(\mu)]^{q\gamma_q}+\|V_\ve\|_{2^*}^{2^*}[t(\mu)]^{2^*}
\end{eqnarray*}
and
\begin{eqnarray*}
2[t(\mu)]^2\|\nabla V_\ve\|_2^2>\mu q\gamma_q^2\|V_\ve\|_q^q[t(\mu)]^{q\gamma_q}+2^*\|V_\ve\|_{2^*}^{2^*}[t(\mu)]^{2^*}.
\end{eqnarray*}
Since $q\gamma_q<2$ for $2<q<2+\frac{4}{N}$, by
\begin{eqnarray*}
(2^*-2)[t(\mu)]^2\|\nabla V_\ve\|_2^2<\mu(2^*-q\gamma_q)\gamma_q\|V_\ve\|_q^q[t(\mu)]^{q\gamma_q},
\end{eqnarray*}
it is easy to see that $t(\mu)\to0$ as $\mu\to0$ for all $N\geq3$.  It follows that
\begin{eqnarray*}
[t(\mu)]^2\sim\mu [t(\mu)]^{q\gamma_q}\quad\text{as }\mu\to0,
\end{eqnarray*}
which implies $t(\mu)\sim\mu^{\frac{1}{2-q\gamma_q}}$ as $\mu\to0$.  Thus, by $q\gamma_q<2$ for $2<q<2+\frac{4}{N}$ once more,
\begin{eqnarray*}
\mathcal{E}_\mu((V_{\ve})_{t(\mu)})=(\frac{1}{2}-\frac{1}{q\gamma_q})\|\nabla V_{\ve}\|_2^2[t(\mu)]^2+(\frac{1}{q\gamma_q}-\frac{1}{2^*})\|V_{\ve}\|_{2^*}^{2^*}[t(\mu)]^{2^*}\sim-\mu^{\frac{2}{2-q\gamma_q}}.
\end{eqnarray*}
Therefore, by $\mathcal{E}_\mu((V_{\ve})_{t(\mu)})\geq m_{a,\mu}^+$ and $m_{a,\mu}^+\gtrsim-\mu\|u_{\mu,+}\|_q^q$, we have
\begin{eqnarray*}
\mu\|u_{\mu,+}\|_q^q\gtrsim\mu^{\frac{2}{2-q\gamma_q}},
\end{eqnarray*}
which together with \eqref{eq0050}, implies
\begin{eqnarray*}
\mu\|u_{\mu,+}\|_q^q\sim\mu^{\frac{2}{2-q\gamma_q}}.
\end{eqnarray*}
By the regularity of $u_{\mu,+}$ and the Pohozaev identity, $\lambda_{\mu,+}\sim-\mu\|u_{\mu,+}\|_q^q$, and by \eqref{eq0021} and $u_{\mu,+}\in\mathcal{P}_+^{a,\mu}$, $\|\nabla u_{\mu,+}\|_2^2\sim\mu\|u_{\mu,+}\|_q^q$.  Therefore,
\begin{eqnarray*}
-\lambda_{\mu,+}\sim\|\nabla u_{\mu,+}\|_2^2\sim\mu^{\frac{2}{2-q\gamma_q}}
\end{eqnarray*}
as $\mu\to0$.  It completes the proof.
\end{proof}

By the well-known uniqueness result (cf. \cite{K89}) and the scaling invariance of \eqref{eqnew1010},
\begin{eqnarray*}
\phi_0(x)=\bigg(\frac{\nu_0}{\sigma_0}\bigg)^{\frac{1}{q-2}}w(\sqrt{\nu_0}x),
\end{eqnarray*}
where $w$ is the unique solution of the following equation:
\begin{eqnarray}\label{eq0024}
\left\{\aligned&-\Delta u+u=u^{q-1}\quad\text{in }\bbr^N,\\
&u(0)=\max_{x\in\bbr^N}u(x),\\
&u(x)>0\quad\text{in }\bbr^N,\\
&u(x)\to0\quad\text{as }|x|\to+\infty,\endaligned\right.
\end{eqnarray}
A direct calculation also shows that
\begin{eqnarray}\label{eq1005}
\psi_{\nu,\sigma}(x)=(\frac{\nu}{\sigma})^{\frac{1}{q-2}}\phi_0(\sqrt{\frac{\nu}{\sigma}}x)
\end{eqnarray}
for $\nu,\sigma>0$ are all minimizers of the Gagliardo-Nirenberg inequality~\eqref{eq0059}.  Let $\nu_a$ be given by \eqref{eq1008}, then for $q\not=2+\frac{4}{N}$,
$\psi_{\nu_a,1}\in\mathcal{S}_a$ and $\psi_{\nu_a,1}$ is a minimizer of the Gagliardo--Nirenberg inequality, that is,
\begin{eqnarray}\label{eq0010}
\|\psi_{\nu_a,1}\|_q^q=C_{N,q}^qa^{q-q\gamma_q}\|\nabla\psi_{\nu_a,1}\|_2^{q\gamma_q}.
\end{eqnarray}
For the sake of simplicity, we re-denote $\psi_{a}=\psi_{\nu_a,1}$.
\begin{proposition}\label{prop0004}
Let $2<q<2+\frac{4}{N}$.  Then $w_{\mu,+}\to \psi_a$ strongly in $H^1(\bbr^N)$ as $\mu\to0$, where $w_{\mu,+}=s_\mu^{-\frac{N}{2}}u_{\mu,+}(s_\mu^{-1} x)$ with $s_\mu$ being the unique solution of \eqref{eq1009}.
Moreover, up to translations and rotations, $u_{\mu,+}$ is the unique ground state of \eqref{eq0001} for $\mu>0$ sufficiently small.
\end{proposition}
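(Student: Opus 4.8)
The plan is to combine a two–sided energy comparison with the sharp Gagliardo--Nirenberg inequality to show that, after rescaling, $u_{\mu,+}$ is an almost–extremal of Gagliardo--Nirenberg, to upgrade this to strong $H^1$ convergence via radial compactness and the uniqueness of the extremal with prescribed mass and gradient norm, and finally to deduce the uniqueness of the ground state from the implicit function theorem applied to a rescaled, fixed–parameter version of \eqref{eq0001} using the non–degeneracy of $\psi_a$. \emph{Step $1$ (the scale $s_\mu$).} With the $L^2$–preserving dilation $(u)_s(x)=s^{N/2}u(sx)$ one has $\|\nabla(u)_s\|_2^2=s^2\|\nabla u\|_2^2$, $\|(u)_s\|_q^q=s^{q\gamma_q}\|u\|_q^q$ and $\|(u)_s\|_{2^*}^{2^*}=s^{2^*}\|u\|_{2^*}^{2^*}$, so \eqref{eq1009} is exactly the condition $(\psi_a)_{s_\mu}\in\mathcal{P}_+^{a,\mu}$. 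Since $q\gamma_q<2<2^*$ for $2<q<2+\frac4N$, the fiber map $s\mapsto\mathcal{E}_\mu((\psi_a)_s)$ is strictly decreasing near $s=0$ and, for $\mu$ small, has exactly one local minimum point and one local maximum point (the argument of \cite[Lemma~4.2]{S20} applied to the fixed function $\psi_a$); this gives the existence and uniqueness of $s_\mu$. Balancing the two lowest–order terms in the first relation of \eqref{eq1009} and using $s_\mu\to0$ gives $s_\mu^{2-q\gamma_q}=\gamma_q\|\psi_a\|_q^q\|\nabla\psi_a\|_2^{-2}\mu(1+o(1))$, hence $s_\mu\sim\mu^{1/(2-q\gamma_q)}$, and by \eqref{eq0010} also $s_\mu^{2-q\gamma_q}=\gamma_qC_{N,q}^qa^{q-q\gamma_q}\|\nabla\psi_a\|_2^{q\gamma_q-2}\mu(1+o(1))$.

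\emph{Step $2$ (convergence).} By Lemma~\ref{lemma0008} (and the a priori bound $\|\nabla u\|_2^{2-q\gamma_q}\lesssim\mu$ valid for every $u\in\mathcal{P}_+^{a,\mu}$, obtained by eliminating $\|u\|_{2^*}^{2^*}$ between the two defining relations of $\mathcal{P}_+^{a,\mu}$ and using Gagliardo--Nirenberg) we have $t_\mu:=\|\nabla u_{\mu,+}\|_2\to0$ with $t_\mu^2\sim\mu^{2/(2-q\gamma_q)}$, so \eqref{eq0048} forces $\|u_{\mu,+}\|_{2^*}^{2^*}\lesssim t_\mu^{2^*}=o(\mu\|u_{\mu,+}\|_q^q)$: the critical term is of higher order. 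Writing $\mathcal{E}_\mu^0(u)=\frac12\|\nabla u\|_2^2-\frac\mu q\|u\|_q^q$ and $e_\mu^0=\inf_{\mathcal{S}_a}\mathcal{E}_\mu^0$ (attained, by Gagliardo--Nirenberg, at the dilation of $\psi_a$ with gradient norm $(\mu\gamma_qC_{N,q}^qa^{q-q\gamma_q})^{1/(2-q\gamma_q)}$, and $e_\mu^0\sim-\mu^{2/(2-q\gamma_q)}$), the upper bound $m_{a,\mu}^+\le\mathcal{E}_\mu((\psi_a)_{s_\mu})$ together with the lower bound $m_{a,\mu}^+=\mathcal{E}_\mu^0(u_{\mu,+})-\tfrac1{2^*}\|u_{\mu,+}\|_{2^*}^{2^*}\ge e_\mu^0-o(|e_\mu^0|)$ give $m_{a,\mu}^+=e_\mu^0(1+o(1))=\mathcal{E}_\mu^0(u_{\mu,+})(1+o(1))$. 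Substituting $\|u_{\mu,+}\|_q^q=Q(u_{\mu,+})a^{q-q\gamma_q}t_\mu^{q\gamma_q}$, where $Q(u):=\|u\|_q^q\big(\|u\|_2^{q-q\gamma_q}\|\nabla u\|_2^{q\gamma_q}\big)^{-1}\le C_{N,q}^q$, into the last identity and comparing with the scalar minimization defining $e_\mu^0$ forces both $t_\mu\big(\mu\gamma_qC_{N,q}^qa^{q-q\gamma_q}\big)^{-1/(2-q\gamma_q)}\to1$ and $Q(u_{\mu,+})\to C_{N,q}^q$. Set $v_\mu:=(u_{\mu,+})_{1/t_\mu}\in\mathcal{S}_a$, so $\|\nabla v_\mu\|_2=1$ and $v_\mu$ is radially symmetric and radially decreasing; by the compact embedding $H^1_{\mathrm{rad}}(\bbr^N)\hookrightarrow L^q(\bbr^N)$, along a subsequence $v_\mu\rightharpoonup v_0$ in $H^1$ and $v_\mu\to v_0$ in $L^q$, whence $\|v_0\|_q^q=C_{N,q}^qa^{q-q\gamma_q}>0$, so $v_0$ is a Gagliardo--Nirenberg extremal with $\|v_0\|_2=a$, $\|\nabla v_0\|_2=1$; by uniqueness of such an extremal in the radial class, $v_0=(\psi_a)_{1/\|\nabla\psi_a\|_2}$, and matching the $L^2$ and $\dot H^1$ norms upgrades the convergence to strong in $H^1$ and removes the subsequence. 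Finally, comparing the Pohozaev identity of $u_{\mu,+}$ with the first relation of \eqref{eq1009}, using $Q(u_{\mu,+})\to C_{N,q}^q$ and \eqref{eq0010}, gives $t_\mu^{2-q\gamma_q}=\gamma_qC_{N,q}^qa^{q-q\gamma_q}\mu(1+o(1))$, hence $t_\mu/s_\mu\to\|\nabla\psi_a\|_2$, and therefore $w_{\mu,+}=(u_{\mu,+})_{1/s_\mu}=(v_\mu)_{t_\mu/s_\mu}\to\big((\psi_a)_{1/\|\nabla\psi_a\|_2}\big)_{\|\nabla\psi_a\|_2}=\psi_a$ strongly in $H^1(\bbr^N)$.

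\emph{Step $3$ (uniqueness).} First I reduce to the radial class: Schwarz symmetrization does not increase the $\mathcal{P}_+^{a,\mu}$–fiber value of $\mathcal{E}_\mu$ (this value depends on $u$ only through $\|\nabla u\|_2^2,\|u\|_q^q,\|u\|_{2^*}^{2^*}$ and, by the envelope theorem, is strictly increasing in $\|\nabla u\|_2^2$ with the other two fixed), so any ground state — being a positive solution of a semilinear elliptic equation — must be radially decreasing up to a translation by the equality case of the Pólya--Szegő inequality (the complex–valued case is handled by the diamagnetic inequality). Hence Step $2$ applies verbatim to every ground state $u$: after the common rescaling by $1/s_\mu$ and a translation, $w:=(u)_{1/s_\mu}\to\psi_a$ in $H^1$, and $w$ solves $-\Delta w=\Lambda w+M_\mu|w|^{q-2}w+\ve_\mu|w|^{2^*-2}w$, $\|w\|_2^2=a^2$, with $M_\mu=\mu s_\mu^{q\gamma_q-2}\to\sigma_0$ and $\ve_\mu=s_\mu^{2^*-2}\to0$ depending only on $\mu$, and $\Lambda=\lambda s_\mu^{-2}\to-\nu_0\nu_a=:\Lambda_0$. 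I then apply the implicit function theorem to $G(M,\ve;w,\Lambda)=\big(-\Delta w-\Lambda w-M|w|^{q-2}w-\ve|w|^{2^*-2}w,\ \|w\|_2^2-a^2\big)$ on $H^1_{\mathrm{rad}}(\bbr^N)\times\bbr$ at $(\sigma_0,0;\psi_a,\Lambda_0)$: the non–degeneracy of $\psi_a$ in the radial class (\cite{K89}) makes the linearization $L=-\Delta+\nu_0\nu_a-\sigma_0(q-1)\psi_a^{q-2}$ invertible on $H^1_{\mathrm{rad}}(\bbr^N)$, and the Fredholm alternative together with $\langle\psi_a,L^{-1}\psi_a\rangle\ne0$ — which holds because the mass of the family $v_{\Lambda'}$ of radial solutions of $-\Delta v+\Lambda'v=\sigma_0 v^{q-1}$ is strictly monotone in $\Lambda'$, its exponent $\tfrac{2}{q-2}-\tfrac N2$ being positive for $q<2+\tfrac4N$ — makes $D_{(w,\Lambda)}G$ an isomorphism. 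Thus for $\mu$ small \eqref{eq0001} has, after the rescaling, a unique solution in a fixed neighbourhood of $(\psi_a,\Lambda_0)$ in $H^1_{\mathrm{rad}}(\bbr^N)\times\bbr$; since every (translation–normalized) ground state lands in that neighbourhood once $\mu$ is small, any two ground states coincide up to translation and rotation, and since $u_{\mu,+}$ is a ground state by \cite[Theorem~1.1]{S20} it is the unique one. The main obstacle is precisely this last step: one must carry the Lagrange multiplier along as an unknown, identify and verify the correct non–degeneracy condition $\langle\psi_a,L^{-1}\psi_a\rangle\ne0$ (equivalently, strict monotonicity of the mass along the dilation family), and arrange that the rescaled problems attached to all ground states are governed by the single parameter pair $(M_\mu,\ve_\mu)$ so that the implicit function theorem applies uniformly; the symmetrization feeding the convergence of a general ground state into this scheme (in particular the Brothers--Ziemer equality case) also requires care.
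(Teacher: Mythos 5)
Your proof is correct in outline and reaches the same three milestones as the paper (the scale $s_\mu$ and its asymptotics, strong convergence of the rescaled minimizer to $\psi_a$, and local uniqueness via the implicit function theorem with the Lagrange multiplier carried as an unknown), but two of your sub-arguments are genuinely different. For the convergence, the paper first proves the gradient comparison $\|\nabla u_{\mu,+}\|_2^2\ge\|\nabla(\psi_a)_{s_\mu}\|_2^2$ by trapping $m_{a,\mu}^+$ between values of the decreasing function $f(t)=\frac1N t^2-\frac{\mu a^{q-q\gamma_q}C_{N,q}^q}{q}(1-\frac{q\gamma_q}{2^*})t^{q\gamma_q}$, and then passes to the limit in the rescaled Euler--Lagrange equation \eqref{eq0052}, identifying the limit through Kwong's uniqueness theorem for \eqref{eq0053} and the mass constraint; you instead compare $m_{a,\mu}^+$ with the purely subcritical minimization $e_\mu^0=\inf_{\mathcal{S}_a}\mathcal{E}_\mu^0$ to show that $u_{\mu,+}$ is an almost-extremal of the Gagliardo--Nirenberg inequality, and identify the limit from the equality case of that inequality. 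Both work; your route avoids the limit PDE but leans on the (nontrivial, though standard) fact that radial almost-extremals with normalized $L^2$ and $\dot H^1$ norms converge to the unique normalized extremal, while the paper's route gets this for free from \cite{K89}. For the non-degeneracy of the bordered linearization, your criterion $\langle\psi_a,L^{-1}\psi_a\rangle\ne0$ via strict monotonicity of the mass along the dilation family is exactly equivalent to the paper's explicit computation $\phi_a=\frac1{q-2}\psi_a+\frac12 x\cdot\nabla\psi_a$ and $\int\phi_a\psi_a\ne0$; both hinge on $q\ne2+\frac4N$.

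The one soft spot is your reduction of a general ground state to the radial class by Schwarz symmetrization plus the Pólya--Szegő equality case: as you yourself note, the Brothers--Ziemer rigidity statement requires verifying that the symmetrized function has no flat level sets, and the claim that symmetrization does not increase the fiber value is an extra variational lemma you have not proved. This is not a fatal gap, but it is the weakest link, and it can be bypassed entirely by the paper's argument: a ground state is a critical point, hence (by \cite[Theorem~1.3]{S20}) a phase factor times a real positive solution of \eqref{eq0001} with $\lambda<0$ by the Pohozaev identity, so Gidas--Ni--Nirenberg gives radial symmetry and monotonicity directly, after which your Step~2 and Step~3 apply verbatim. With that substitution the argument is complete.
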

\begin{proof}
Since $\psi_{a}\in\mathcal{S}_a$, by \cite[Lemma~4.2]{S20}, there exists a unique $s_\mu>0$ such that $(\psi_{a})_{s_\mu}\in\mathcal{P}_+^{a,\mu}$ for $\mu>0$ sufficiently small where $(\psi_{a})_{s_\mu}=s_\mu^{\frac{N}{2}}\psi_a(s_\mu x)$.  That is,
\begin{eqnarray}\label{eq1002}
s_\mu^2\|\nabla \psi_{a}\|_2^2=\mu\gamma_q\|\psi_{a}\|_q^qs_\mu^{q\gamma_q}+\|\psi_{a}\|_{2^*}^{2^*}s_\mu^{2^*}
\end{eqnarray}
and
\begin{eqnarray}\label{eq1003}
2s_\mu^2\|\nabla \psi_{a}\|_2^2>\mu q\gamma_q^2\|\psi_{a}\|_q^qs_\mu^{q\gamma_q}+2^*\|\psi_{a}\|_{2^*}^{2^*}s_\mu^{2^*}.
\end{eqnarray}
As that in the proof of Lemma~\ref{lemma0008}, we have
\begin{eqnarray}\label{eq1000}
\|\nabla (\psi_{a})_{s_\mu}\|_2^{2-q\gamma_q}<C_{N,q}^q\gamma_q\mu a^{q-q\gamma_q}\frac{2^*-q\gamma_q}{2^*-2}.
\end{eqnarray}
Since $u_{\mu,+}\in\mathcal{P}_+^{a,\mu}$, we also have
\begin{eqnarray}\label{eq1001}
\|\nabla u_{\mu,+}\|_2^{2-q\gamma_q}<C_{N,q}^q\gamma_q\mu a^{q-q\gamma_q}\frac{2^*-q\gamma_q}{2^*-2}.
\end{eqnarray}
Now, using $(\psi_{a})_{s_\mu}$ as a test function of $m_{a,\mu}^+$ and by \eqref{eq0010},
\begin{eqnarray*}
m_{a,\mu}^+\leq\mathcal{E}_\mu((\psi_{a})_{s_\mu})=\frac{1}{N}\|\nabla (\psi_{a})_{s_\mu}\|_2^{2}-\frac{\mu a^{q-q\gamma_q}C_{N,q}^q}{q}(1-\frac{q\gamma_q}{2^*})\|\nabla (\psi_{a})_{s_\mu}\|_2^{q\gamma_q}.
\end{eqnarray*}
By the Gagliardo--Nirenberg inequality~\eqref{eq0059},
\begin{eqnarray*}
m_{a,\mu}^+=\mathcal{E}_\mu(u_{\mu,+})\geq\frac{1}{N}\|\nabla u_{\mu,+}\|_2^{2}-\frac{\mu a^{q-q\gamma_q}C_{N,q}^q}{q}(1-\frac{q\gamma_q}{2^*})\|\nabla u_{\mu,+}\|_2^{q\gamma_q}.
\end{eqnarray*}
Let us consider the function
\begin{eqnarray*}
f(t)=\frac{1}{N}t^{2}-\frac{\mu a^{q-q\gamma_q}C_{N,q}^q}{q}(1-\frac{q\gamma_q}{2^*})t^{q\gamma_q}.
\end{eqnarray*}
A direct calculation shows that $f(t)$ is strictly decreasing in $(0, t_0)$, where
\begin{eqnarray*}
t_0=\bigg(C_{N,q}^q\gamma_q\mu a^{q-q\gamma_q}\frac{2^*-q\gamma_q}{2^*-2}\bigg)^{\frac{1}{2-q\gamma_q}}.
\end{eqnarray*}
Thus, by \eqref{eq1000} and \eqref{eq1001},
\begin{eqnarray}\label{eq1007}
\|\nabla u_{\mu,+}\|_2^{2}\geq\|\nabla (\psi_{a})_{s_\mu}\|_2^{2}.
\end{eqnarray}
By \eqref{eq1002} and \eqref{eq1003}, we can use similar arguments as that used in the proof of Lemma~\ref{lemma0008} to show that $s_\mu\sim\mu^{\frac{1}{2-q\gamma_q}}$ as $\mu\to0$.  It then follows from \eqref{eq1005} and \eqref{eq1002} that
\begin{eqnarray*}
s_\mu=(1+o_\mu(1))\bigg(\frac{\mu\gamma_q\|\psi_{a}\|_q^q}{\|\nabla \psi_{a}\|_2^2}\bigg)^{\frac{1}{2-q\gamma_q}}=(1+o_\mu(1))\bigg(\frac{\mu\gamma_q\|\phi_0\|_q^q}{\|\nabla \phi_0\|_2^2}\bigg)^{\frac{1}{2-q\gamma_q}}.
\end{eqnarray*}
Since by the Pohozaev identity satisfied by $\phi$, we have $\frac{1}{N}\|\nabla \phi_0\|_2^2=\frac{(q-2)\sigma_0}{2q}\|\phi_0\|_q^q$.  By \eqref{eq0008}, $s_\mu=[(\frac{1}{\sigma_0}+o_\mu(1))\mu]^{\frac{1}{2-q\gamma_q}}$.
Let
\begin{eqnarray*}
w_{\mu,+}=s_\mu^{-\frac{N}{2}}u_{\mu,+}(s_\mu^{-1} x).
\end{eqnarray*}
Since $u_{\mu,+}$ satisfies \eqref{eq0001},
$w_{\mu,+}$ satisfies the following equation:
\begin{eqnarray}\label{eq0052}
-\Delta w_{\mu,+}=\lambda_{\mu,+} s_\mu^{-2} w_{\mu,+}+\mu s_\mu^{-2+\frac{N}{2}(q-2)}w_{\mu,+}^{q-1}+s_\mu^{-2+\frac{N}{2}(2^*-2)}w_{\mu,+}^{2^*-1}.
\end{eqnarray}
By Lemma~\ref{lemma0008} and
\begin{eqnarray*}
\int_{\bbr^N}w_{\mu,+}^2=\int_{\bbr^N}u_{\mu,+}^2\equiv a^2,
\end{eqnarray*}
we have
\begin{eqnarray*}
\|\nabla w_{\mu,+}\|_2^2+\|w_{\mu,+}\|_2^2= s_\mu^{-2}\|\nabla u_{\mu,+}\|_2^2+a^2\sim1.
\end{eqnarray*}
Therefore, $\{w_{\mu,+}\}$ is bounded in $H^1(\bbr^N)$.  It follows that $w_{\mu,+}\rightharpoonup w_*$ weakly in $H^1(\bbr^N)$ as $\mu\to0$ up to a subsequence.  Note that $w_{\mu,+}$ is radial, by Struss's radial lemma (cf. \cite[Lemma A.IV, Theorem A.I']{BL83} or \cite[Lemma~3.1]{MM14}) and the Sobolev embedding theorem, $w_{\mu,+}\to w_*$ strongly in $L^q(\bbr^N)$ as $\mu\to0$ up to a subsequence.  By Lemma~\ref{lemma0008} once more, $\{\lambda_{\mu,+}\mu^{\frac{-2}{2-q\gamma_q}}\}$ is bounded.  Thus, $\lambda_{\mu,+}\mu^{\frac{-2}{2-q\gamma_q}}\to \alpha_*$ as $\mu\to0$ up to a subsequence.  On the other hand, by $q\gamma_q<2$ for $2<q<2+\frac{4}{N}$,
\begin{eqnarray*}
s_\mu^{-2+\frac{N}{2}(2^*-2)}\sim\mu^{\frac{2^*-2}{2-q\gamma_q}}\to0
\end{eqnarray*}
as $\mu\to0$.  Now, using \eqref{eq1007} and \eqref{eq0052}, it is standard to show that $w_{\mu,+}\to w_*$ strongly in $H^1(\bbr^N)$ as $\mu\to0$ up to a subsequence, where $w_*$ is the unique solution of the following equation:
\begin{eqnarray}\label{eq0053}
\left\{\aligned&-\Delta u+\alpha_*u=\sigma_0u^{q-1}\quad\text{in }\bbr^N,\\
&u(0)=\max_{x\in\bbr^N}u(x),\\
&u(x)>0\quad\text{in }\bbr^N,\\
&u(x)\to0\quad\text{as }|x|\to+\infty,\endaligned\right.
\end{eqnarray}
by the well-known uniqueness result (cf. \cite{K89}) and the scaling invariance of \eqref{eqnew1010}, $w_*(x)=(\frac{\alpha_*}{\sigma_0})^{\frac{1}{q-2}}w(\sqrt{\alpha_*}x)$, where $w$ is the unique solution of \eqref{eq0024}.  It follows from $\|w_{\mu,+}\|_2^2=a^2$ and the strong convergence of $\{w_{\mu,+}\}$ in $H^1(\bbr^N)$ that $\|w_*\|_2^2=a^2$, which implies $\alpha_*=\nu_{a}\nu_0$ where $\nu_a$ is given by \eqref{eq1008}.  Thus, $w_*=\psi_a$.  Since $\psi_a$ is unique, $w_{\mu,+}\to \psi_a$ strongly in $H^1(\bbr^N)$ as $\mu\to0$.  The system~\eqref{eq1009} directly comes from \eqref{eq1002} and \eqref{eq1003}.
It remains to prove the local uniqueness of $u_{\mu,+}$ for $\mu>0$ sufficiently small.  Let us consider the following system:
\begin{eqnarray}\label{eq0025}
\left\{\aligned&\mathcal{F}(w,\alpha,\beta,\gamma)=\Delta w-\alpha\nu_0 w+\beta w^{q-1}+\gamma w^{2^*-1},\\
&\mathcal{G}(w,\alpha,\beta,\gamma)=\|w\|_2^2-a^2,
\endaligned\right.
\end{eqnarray}
where $\alpha,\beta,\gamma>0$ are parameters.
It is easy to see that $\mathcal{F}(\psi_a,\nu_a,\sigma_0,0)=0$ and $\mathcal{G}(\psi_a,\nu_a,\sigma_0,0)=0$.  Let
\begin{eqnarray*}
\mathcal{L}(\psi_a,\nu_a,\sigma_0,0)=\left(\aligned\partial_w\mathcal{F}(\psi_a,\nu_a,\sigma_0,0)\quad\partial_\alpha\mathcal{F}(\psi_a,\nu_a,\sigma_0,0)\\
\partial_w\mathcal{G}(\psi_a,\nu_a,\sigma_0,0)\quad \partial_\alpha\mathcal{G}(\psi_a,\nu_a,\sigma_0,0)\endaligned\right)
\end{eqnarray*}
be the linearization of the system~\eqref{eq0025} at $(\psi_a,\nu_a,\sigma_0,0)$ in $H^1(\bbr^N)\times\bbr$, that is,
\begin{eqnarray*}
\partial_w\mathcal{F}(\psi_a,\nu_a,\sigma_0,0)=\Delta-\nu_a\nu_0+(q-1)\sigma_0\psi_a^{q-2},\quad\partial_\alpha\mathcal{F}(\psi_a,\nu_a,\sigma_0,0)=-\nu_0\psi_a
\end{eqnarray*}
and
\begin{eqnarray*}
\partial_w\mathcal{G}(\psi_a,\nu_a,\sigma_0,0)=2\psi_a,\quad\partial_\alpha\mathcal{G}(\psi_a,\nu_a,\sigma_0,0)=0.
\end{eqnarray*}
Then $\mathcal{L}(\psi_a,\nu_a,\sigma_0,0)[(\phi,\tau)]=0$ if and only if
\begin{eqnarray*}\label{eq0026}
\left\{\aligned&\Delta \phi-\nu_a\nu_0\phi+(q-1)\sigma_0\psi_a^{q-2}\phi-\tau \nu_0\psi_a=0,\\
&\int_{\bbr^N}\psi_a\phi=0.
\endaligned\right.
\end{eqnarray*}
Let us consider the following system:
\begin{eqnarray}\label{eq1126}
\left\{\aligned&\Delta \phi-\nu_a\nu_0\phi+(q-1)\sigma_0\psi_a^{q-2}\phi-\tau \nu_0\psi_a=g,\\
&\int_{\bbr^N}\psi_a\phi=b,
\endaligned\right.
\end{eqnarray}
where $(g,b)\in H_{rad}^1(\bbr^N)\times\bbr$ with
\begin{eqnarray*}
H_{rad}^1(\bbr^N)=\{u\in H^1(\bbr^N)\mid \text{$u$ is radial}\}.
\end{eqnarray*}
Then $\phi=\phi_g+\tau\nu_0\phi_a$, where $\phi_g$ and $\phi_a$ satisfies
\begin{eqnarray}\label{eq0027}
\Delta \phi_g-\nu_a\nu_0\phi_g+(q-1)\sigma_0\psi_a^{q-2}\phi_g=g
\end{eqnarray}
and
\begin{eqnarray}\label{eq1127}
\Delta \phi_a-\nu_a\nu_0\phi_a+(q-1)\sigma_0\psi_a^{q-2}\phi_a=\psi_a,
\end{eqnarray}
respectively.  By \cite[(5.2) and (5.3)]{W99}, $\phi_a=\frac{1}{q-2}\psi_a+\frac12 (x\cdot\nabla\psi_a)$ and
\begin{eqnarray*}
\int_{\bbr^N}\phi_a\psi_a=(\frac{1}{q-2}-\frac{4}{N})\|\psi_a\|_2^2\not=0
\end{eqnarray*}
since $q\not=2+\frac{4}{N}$.
Thus, the unique solution of \eqref{eq1126} is given by $(\phi_g+\tau_{b,g}\nu_0\phi_a,\tau_{b,g})$
where
\begin{eqnarray*}
\tau_{b,g}=\frac{b-\int_{\bbr^N}\phi_g\psi_a}{\nu_0\int_{\bbr^N}\phi_a\psi_a}.
\end{eqnarray*}
Since $q<2^*$, it is well-known that $\psi_a$ is nondegenerate (cf. \cite[Theorem~2.12]{LN88} and \cite[Lemma~4.2]{NT93}).  Thus, by $q<2^*$, \eqref{eq0027} only has zero solution in $H_{rad}^1(\bbr^N)$ for $g=0$, which implies the linear operator $\mathcal{L}(\psi_a,\nu_a,\sigma_0,0):H_{rad}^1(\bbr^N)\times\bbr\to H_{rad}^1(\bbr^N)\times\bbr$ is bijective.  Moreover, it is standard to show that
\begin{eqnarray*}
|\tau_{b,g}|+\|\phi_g+\tau_{b,g}\phi_a\|_{H^1}\lesssim |b|+\|g\|_{H^1}.
\end{eqnarray*}
Now,
by the implicit function theorem, there exists a unique $C^1$-curve $(w^{\beta,\gamma}, \alpha^{\beta,\gamma})$ in $H_{rad}^1(\bbr^N)\times\bbr^3$ for $|\beta-\sigma_0|<<1$ and $|\gamma|<<1$ such that $(w^{\sigma_0,0}, \alpha^{\sigma_0,0})=(\psi_a,\nu_a)$, and
\begin{eqnarray*}
\mathcal{F}(w^{\beta,\gamma},\alpha^{\beta,\gamma},\beta,\gamma)\equiv0,\quad\mathcal{G}(w^{\beta,\gamma},\alpha^{\beta,\gamma},\beta,\gamma)\equiv0.
\end{eqnarray*}
We recall that $w_{\mu,+}$ is radial and satisfies \eqref{eq0052}, and $w_{\mu,+}\to \psi_a$ strongly in $H^1(\bbr^N)$ as $\mu\to0$ with $\|w_{\sigma,+}\|_2^2=a^2$, thus, by the uniqueness of $s_\mu$ determined by \eqref{eq1009}, we must have $w_{\mu,+}=w^{\beta(\mu),\gamma(\mu)}$ for $\beta(\mu)=\mu s_\mu^{-2+\frac{N}{2}(q-2)}$ and $\gamma(\mu)=s_\mu^{-2+\frac{N}{2}(2^*-2)}$ with $\mu>0$ sufficiently small.  On the other hand, if $\widetilde{u}_{\mu,+}$ is another ground state of \eqref{eq0001} with some $\widetilde{\lambda}_{\mu,+}\in\bbr$ for $\mu>0$ sufficiently small, then by \cite[Theorem~1.3]{S20}, $\widetilde{u}_{\mu,+}=e^{i\theta}\widehat{u}_{\mu,+}$ where $\theta$ is a constant and $\widehat{u}_{\mu,+}$ is real valued and positive.
Since by the Pohozaev identity, we always have $\widetilde{\lambda}_{\mu,+}<0$.  By
applying the well-known Gidas-Ni-Nirenberg theorem (cf. \cite{GNN81}), $\widetilde{u}_{\mu,+}$ must be radially symmetric.  Now, by running the arguments as used above once more, we know that $\widehat{w}_{\mu,+}=s_\mu^{-\frac{N}{2}}\widehat{u}_{\mu,+}(s_\mu^{-1} x)\to \psi_a$ strongly in $H^1(\bbr^N)$ as $\mu\to0^+$ with $\|w_{\sigma,+}\|_2^2=a^2$.
It follows from the uniqueness of $w^{\beta(\mu),\gamma(\mu)}$ that $\widehat{u}_{\mu,+}=u_{\mu,+}$ for $\mu>0$ sufficiently small.  Thus, $u_{\mu,+}$ is the unique ground state of \eqref{eq0001} for $\mu>0$ sufficiently small up to translations and rotations.
\end{proof}

\section{Existence and nonexistence of $u_{a,\mu,-}$}
In this section, we shall mainly study the question~$(Q_1)$.  Since in the very recent work \cite{JL201}, the question~$(Q_1)$ has been solved for $N\geq4$.  we only consider the case $N=3$ and prove that $m_{a,\mu}^-$ can also be attained by some $u_{a,\mu,-}$ for $2<q<2+\frac{4}{N}$ in the case $N=3$ under some additional assumptions, where $m_{a,\mu}^-$ is also given by \eqref{eq0045} and $u_{a,\mu,-}$ is also real valued, positive, radially symmetric and radially decreasing.  The crucial point in this study is the following energy estimates.
\begin{lemma}\label{lemma0002}
Let $N=3$, $2<q<2+\frac{4}{N}$ and $\mu,a>0$.  Then for $\mu a^{q-q\gamma_q}<\alpha_{N,q}$,
\begin{eqnarray}\label{eq0068}
m_{a,\mu}^-=\inf_{u\in\mathcal{P}_-^{a,\mu}}\mathcal{E}_\mu(u)<m_{a,\mu}^++\frac{1}{3}S^{\frac{3}{2}}.
\end{eqnarray}
\end{lemma}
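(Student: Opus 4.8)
The plan is to construct a good test path in $\mathcal{S}_a$ connecting the $\mathcal{P}_+^{a,\mu}$-minimizer $u_{\mu,+}$ to the region of $\mathcal{P}_-^{a,\mu}$ where the energy is small, using the radial superposition $u_{\mu,+} + V_\varepsilon$ of the ground state and a (truncated) Aubin--Talanti bubble, where $V_\varepsilon$ is the function defined in \eqref{eq0069}. Because $m_{a,\mu}^-$ is a mountain-pass level for the fibering map $\Psi$, and the compactness threshold for $(PS)$-sequences at this level is $m_{a,\mu}^+ + \frac{1}{N}S^{N/2}$ (this is the standard concave-convex phenomenon, analogous to \cite{ABC94,BN83}), it suffices to exhibit one path along which the maximum of $\mathcal{E}_\mu$ stays strictly below $m_{a,\mu}^+ + \frac13 S^{3/2}$. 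First I would recall from \cite[Lemma~4.2]{S20} that for each $u\in\mathcal{S}_a$ there is a unique $s(u)$ with $(u)_{s(u)}\in\mathcal{P}_-^{a,\mu}$, the map $s\mapsto\Psi(u,s)$ having a local minimum on $\mathcal{P}_+$ and a global maximum on $\mathcal{P}_-$; so the natural path is $\tau\mapsto (u_{\mu,+} + \tau R_\varepsilon V_\varepsilon)$ for $\tau\in[0,T]$ (with $R_\varepsilon$ normalizing back into $\mathcal{S}_a$), dilated appropriately, and one estimates $\max_\tau \mathcal{E}_\mu$ along it.

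The key computation is the expansion of $\|u_{\mu,+} + V_\varepsilon\|$ in the three relevant norms. Using that $u_{\mu,+}$ and $V_\varepsilon$ concentrate at very different scales (by Lemma~\ref{lemma0008}, $\|\nabla u_{\mu,+}\|_2^2 \sim \mu^{2/(2-q\gamma_q)}\to 0$, so $u_{\mu,+}$ spreads out while $V_\varepsilon$ can be taken sharply peaked), the cross terms are lower order: one gets
\begin{eqnarray*}
\|\nabla(u_{\mu,+}+V_\varepsilon)\|_2^2 &=& \|\nabla u_{\mu,+}\|_2^2 + \|\nabla V_\varepsilon\|_2^2 + o(1),\\
\|u_{\mu,+}+V_\varepsilon\|_{2^*}^{2^*} &=& \|u_{\mu,+}\|_{2^*}^{2^*} + \|V_\varepsilon\|_{2^*}^{2^*} + o(1),
\end{eqnarray*}
and the $L^q$-norm of $V_\varepsilon$ is negligible for $N=3$ and $q<2+\frac4N$ because $q\gamma_q<2$ makes the bubble's $L^q$-mass small in the relevant scaling. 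Feeding these into $\mathcal{E}_\mu$ and optimizing the dilation parameter, the energy along the path splits, up to $o(1)$, into the contribution $\mathcal{E}_\mu(u_{\mu,+})=m_{a,\mu}^+$ coming from the ground-state part plus the contribution $\frac1N\|\nabla V_\varepsilon\|_2^2 - \frac1{2^*}\|V_\varepsilon\|_{2^*}^{2^*}$ from the bubble part, which by \eqref{eq0036} is at most $\frac1N S^{N/2} + O((R_\varepsilon\varepsilon^{-1})^{2-N})$. Here is where I must be careful: the remainder terms must be shown to be strictly negative after the $o(1)$ and $O((R_\varepsilon\varepsilon^{-1})^{2-N})$ errors are absorbed, which for $N=3$ requires the error $O((R_\varepsilon\varepsilon^{-1})^{-1})$ from the gradient truncation to be beaten; since $R_\varepsilon\varepsilon^{-1}\to\infty$ by \eqref{eq0023}, one can choose $\varepsilon$ small enough (depending on $\mu$) so that the strict inequality \eqref{eq0068} holds.

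The main obstacle is precisely this last balancing: one needs the negative interaction correction between $u_{\mu,+}$ and the bubble to dominate the positive truncation error $O((R_\varepsilon\varepsilon^{-1})^{2-N})=O((R_\varepsilon\varepsilon^{-1})^{-1})$ in dimension $N=3$, which is the dimension where bubbles are least integrable and the estimates are most delicate. The classical Brezis--Nirenberg interaction term behaves like $\int u_{\mu,+} V_\varepsilon^{2^*-1}$, and one must verify it is positive and of strictly larger order than the truncation error along the optimal path; concretely this amounts to showing $\int_{\bbr^3} u_{\mu,+}\, V_\varepsilon^{2^*-1}\,dx \gtrsim \varepsilon^{(N-2)/2} = \varepsilon^{1/2}$ up to constants depending on $u_{\mu,+}(0)>0$, while the error is $O(\varepsilon/R_\varepsilon)$; choosing $R_\varepsilon$ (hence $\varepsilon$, via \eqref{eq0023}) appropriately makes the interaction win. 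Once \eqref{eq0068} is in hand, the theorem's conclusion (attainment of $m_{a,\mu}^-$, hence a second solution with $\lambda_{a,\mu,-}<0$) follows from the standard concentration-compactness dichotomy ruling out vanishing and dichotomy below the threshold, together with the Pohozaev constraint forcing the Lagrange multiplier sign, exactly as in \cite{JL201,S20}.
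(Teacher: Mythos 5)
Your overall strategy is the one the paper uses: take the radial superposition of $u_{\mu,+}$ with a truncated Aubin--Talanti bubble, renormalize it into $\mathcal{S}_a$, project onto $\mathcal{P}_-^{a,\mu}$, and show that the Brezis--Nirenberg interaction term $\int_{\bbr^3}u_{\mu,+}V_\ve^{2^*-1}\sim\ve^{1/2}$ beats the remaining errors. You identify that mechanism correctly, and it is exactly the paper's.

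There is, however, a genuine gap in the specific test function you choose. You take $V_\ve$ as in \eqref{eq0069}, i.e.\ the bubble truncated at radius $R_\ve\sim\ve^{-1}$ so that $\|V_\ve\|_2^2=a^2$. Then $\|u_{\mu,+}+\tau V_\ve\|_2^2=(1+\tau^2)a^2+O(\ve^{1/2})$, so the dilation $s$ bringing the sum back into $\mathcal{S}_a$ satisfies $s^2\approx 1+\tau^2$, which is \emph{not} a small perturbation of $1$ at the scale $\tau\sim1$ where the bubble part of the energy attains its maximum $\frac13 S^{3/2}$. Under $u\mapsto s^{1/2}u(s\,\cdot)$ the $L^q$ norm is multiplied by $s^{q\gamma_q-q}<1$, so the negative term $-\frac{\mu}{q}\|\cdot\|_q^q$ is weakened by an amount of order $\mu\bigl(1-(1+\tau^2)^{(q\gamma_q-q)/2}\bigr)\|u_{\mu,+}\|_q^q\sim\mu^{\frac{2}{2-q\gamma_q}}$, \emph{independently of} $\ve$. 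This positive error is of the same order as $|m_{a,\mu}^+|$ and cannot be absorbed by the interaction gain, which is only of order $u_{\mu,+}(0)\,\ve^{1/2}$; one can check that no choice of $\ve$ relative to $\mu$ reconciles the two competing requirements. The paper avoids this by truncating the bubble at a \emph{fixed} radius ($W_\ve=\chi U_\ve$ with $\chi$ supported in $B_2$), so that $\|W_\ve\|_2^2\sim\ve$ and $s=1+O(\ve^{1/2})$; the resulting $O(\ve^{1/2})$ renormalization correction to the $L^q$ term is then of the same order as the linear cross term $\lambda_{\mu,+}\tau\int u_{\mu,+}W_\ve$ and is cancelled against it via the identity $\lambda_{\mu,+}a^2=\mu(\gamma_q-1)\|u_{\mu,+}\|_q^q$, leaving only the interaction term $-\int(\tau W_\ve)^5u_{\mu,+}\sim-\ve^{1/2}$ at leading order. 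This cancellation, which your expansion omits (you declare all cross terms ``lower order'', whereas several are exactly of order $\ve^{1/2}$, the same order as the gain you rely on), is the delicate point; the paper also needs an elementary convexity inequality for $(1+t)^q$ to control the $L^q$ cross terms. With a fixed-radius truncation and this bookkeeping your argument goes through; with the $\mathcal{S}_a$-normalized bubble it does not.
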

\begin{proof}
Since $N=3$, we have $U_\ve=3^{\frac14}(\frac{\ve}{\ve^2+|x|^2})^\frac12$.  Let $W_\ve =\chi (x) U_\epsilon $ where $ \chi (x)$ is a cut-off function such that $ \chi (x)=1 $ for $|x| \leq 1$ and $ \chi (x)=0$ for $|x|>2$.  By simple computations, we have that
\begin{eqnarray}\label{eqnewnew9997}
\|\nabla W_\ve\|_2^2=S^{\frac32}+O(\ve),\quad \|W_\ve\|_6^6=S^{\frac{3}{2}}+O(\ve^3)
\end{eqnarray}
and
\begin{eqnarray}\label{eqnewnew9994}
\|W_\ve\|_p^p\sim\left\{\aligned &\ve^{3-\frac p2},\quad 3<p<6;\\
&\ve^{\frac32}\ln\frac{1}{\ve},\quad p=3;\\
&\ve^{\frac{p}{2}},\quad 2\leq p<3.
\endaligned\right.
\end{eqnarray}
Now, we define $\widehat{W}_{\ve,t}=u_{\mu,+}+tW_\ve$ and $\overline{W}_{\ve,t}=s^{\frac12}\widehat{W}_{\ve,t}(sx)$.  Then it is well-known that
\begin{eqnarray}\label{eqnewnew9992}
\|\nabla \overline{W}_{\ve,t}\|_2^2=\|\nabla \widehat{W}_{\ve,t}\|_2^2,\quad\|\overline{W}_{\ve,t}\|_6^6=\|\widehat{W}_{\ve,t}\|_6^6,
\end{eqnarray}
and
\begin{eqnarray}\label{eqnewnew9991}
\|\overline{W}_{\ve,t}\|_2^2=s^{-2}\|\widehat{W}_{\ve,t}\|_2^2,\quad\|\overline{W}_{\ve,t}\|_q^q=s^{q\gamma_q-q}\|\widehat{W}_{\ve,t}\|_q^q.
\end{eqnarray}
We choose $s=\frac{\|\widehat{W}_{\ve,t}\|_2}{a}$, then $\overline{W}_{\ve,t}\in\mathcal{S}_a$.  By \cite[Lemma~4.2]{S20}, there exist $\tau_{\ve,t}>0$ such that
$(\overline{W}_{\ve,t})_{\tau_{\ve,t}}\in\mathcal{P}_-^{a,\mu}$, where $(\overline{W}_{\ve,t})_{\tau_{\ve,t}}=\tau_{\ve,t}^{\frac{3}{2}}\overline{W}_{\ve,t}(\tau_{\ve,t}x)$.  Thus,
\begin{eqnarray}\label{eqnewnew9993}
\|\nabla \overline{W}_{\ve,t}\|_2^2\tau_{\ve,t}^{2-q\gamma_q}=\mu\gamma_q\|\overline{W}_{\ve,t}\|_q^q+\|\overline{W}_{\ve,t}\|_{2^*}^{2^*}\tau_{\ve,t}^{2^*-q\gamma_q}.
\end{eqnarray}
Since $u_{\mu,+}\in\mathcal{P}_+^{a,\mu}$, by \cite[Lemma~4.2]{S20}, $\tau_{\ve,0}>1$.  By \eqref{eqnewnew9997} and \eqref{eqnewnew9993}, we also know that $\tau_{\ve,t}\to0$ as $t\to+\infty$ uniformly for $\ve>0$ sufficiently small.  Since $\tau_{\ve,t}$ is unique by \cite[Lemma~4.2]{S20}, it is standard to show that $\tau_{\ve,t}$ is continuous for $t$, which implies that there exists $t_\ve>0$ such that $\tau_{\ve, t_\ve}=1$.  It follows that
\begin{eqnarray}\label{eqnewnew9990}
m_{\mu,a}^-\leq\sup_{t\geq0}\mathcal{E}_\mu(\overline{W}_{\ve,t}).
\end{eqnarray}
Recall that $u_{\mu,+}\in\mathcal{S}_a$ and $W_\ve$ are positive, by \eqref{eqnewnew9997}, \eqref{eqnewnew9992} and \eqref{eqnewnew9991}, there exists $t_0>0$ such that
\begin{eqnarray}\label{eqnewnew9989}
\mathcal{E}_\mu(\overline{W}_{\ve,t})=(\frac{1}{2}\|\nabla \widehat{W}_{\ve,t}\|_2^2-\frac{\mu}{q}s^{q\gamma_q-q}\|\widehat{W}_{\ve,t}\|_q^q-\frac{1}{6}\|\widehat{W}_{\ve,t}\|_6^6)<m_{\mu,a}^++\frac{1}{3}S^{\frac{3}{2}}-\sigma'
\end{eqnarray}
for $t<\frac{1}{t_0}$ and $t>t_0$ with $\sigma'>0$.  Since $u_{\mu,+}$ is radial solution of \eqref{eq0001} and exponentially decays to zero as $r\to+\infty$,
\begin{eqnarray*}
\int_{\bbr^3}u_{\mu,+}W_\ve\sim\ve^{\frac52}\int_1^{\frac{1}{\ve}}(\frac{1}{1+r^2})^{\frac12}r^2\sim\ve^{\frac12}
\end{eqnarray*}
and
\begin{eqnarray}
\int_{\bbr^3}u_{\mu,+}W_\ve^5\sim\ve^{\frac12}\int_1^{\frac{1}{\ve}}(\frac{1}{1+r^2})^{\frac52}r^2\sim\ve^{\frac12}.\label{eqnewnew9977}
\end{eqnarray}
Thus, by \eqref{eqnewnew9994},
\begin{eqnarray*}
s^2=\frac{\|\widehat{W}_{\ve,t}\|_2^2}{a^2}=1+\frac{2t}{a^2}\int_{\bbr^3}u_{\mu,+}W_\ve+t^2\|W_\ve\|_2^2=1+O(\ve^{\frac12})
\end{eqnarray*}
for $t_0^{-1}\leq t\leq t_0$.  Since it is easy to see that $f(t)=(1+t)^{q}-1-t^q-qt-qt^{q-1}\geq0$ for all $t\geq0$ in the case of $q\geq3$,
by \eqref{eqnewnew9992}, \eqref{eqnewnew9991} and the fact that $u_{\mu,+}$ is a solution of \eqref{eq0001} for some $\lambda_{\mu,+}<0$,
\begin{eqnarray*}
\mathcal{E}_\mu(\overline{W}_{\ve,t})&=&\frac{1}{2}\|\nabla \widehat{W}_{\ve,t}\|_2^2-\frac{\mu}{q}s^{q\gamma_q-q}\|\widehat{W}_{\ve,t}\|_q^q-\frac{1}{6}\|\widehat{W}_{\ve,t}\|_6^6\notag\\
&\leq& m_{\mu,a}^++\mathcal{E}_\mu(tW_\ve)-\int_{\bbr^3}(tW_\ve)^{5}u_{\mu,+}\\
&&+t(\lambda_{\mu,+}\int_{\bbr^3}u_{\mu,+}W_\ve+\frac{\mu}{a^2}(\gamma_q-1)\|\widehat{W}_{\ve,t}\|_q^q\int_{\bbr^3}u_{\mu,+}W_\ve)+O(\ve)\\
&=& m_{\mu,a}^++\mathcal{E}_\mu(tW_\ve)-\int_{\bbr^3}(tW_\ve)^{5}u_{\mu,+}+O(\ve)
\end{eqnarray*}
for $t_0^{-1}\leq t\leq t_0$,
where we have used the fact that $\lambda_{\mu,+}a^2=\lambda_{\mu,+}\|u_{\mu,+}\|_2^2=\mu(\gamma_q-1)\|u_{\mu,+}\|_q^q$ which comes from the Pohozaev identity satisfied by $u_{\mu,+}$.
Now, for $t_0^{-1}\leq t\leq t_0$,
by \eqref{eqnewnew9997}, \eqref{eqnewnew9994} and \eqref{eqnewnew9977},
\begin{eqnarray*}
\mathcal{E}_\mu(\overline{W}_{\ve,t})\leq m_{\mu,a}^++\frac{1}{3}S^{\frac{3}{2}}+O(\ve)-C\ve^{\frac12}<m_{\mu,a}^++\frac{1}{3}S^{\frac{3}{2}}
\end{eqnarray*}
by taking $\ve>0$ sufficiently small.  It follows from \eqref{eqnewnew9989} that
\begin{eqnarray}
\sup_{t\geq0}\mathcal{E}_\mu(\overline{W}_{\ve,t})<m_{\mu,a}^++\frac{1}{3}S^{\frac{3}{2}}.
\end{eqnarray}
The conclusion then follows from \eqref{eqnewnew9990}.
\end{proof}

\begin{remark}
It is worth pointing our that the above argument also works for $N\geq4$.  In these cases, we have
\begin{eqnarray*}
\|\nabla W_\ve\|_2^2=S^{\frac N2} +O(\ve^{N-2}),\quad \|W_\ve\|_6^6=S^{\frac{N}{2}} +O(\ve^N)
\end{eqnarray*}
and
\begin{eqnarray*}
\|W_\ve\|_q^q \sim \ve^{N-\frac{(N-2)q}{2}},\quad \|W_\ve\|_2^2\sim\left\{\aligned &\ve^2\ln\frac{1}{\ve},\quad N=4,\\
&\ve^2,\quad N\geq5.\endaligned\right.
\end{eqnarray*}
Moreover, similar to \eqref{eqnewnew9977},
\begin{eqnarray*}
\int_{\bbr^N}u_{\mu,+}^pW_\ve\sim\ve^{\frac{N-2}{2}}\quad\text{for all }p\geq1.
\end{eqnarray*}
It follows that
\begin{eqnarray*}
\mathcal{E}_\mu(\overline{W}_{\ve,t})&=&\frac{1}{2}\|\nabla \widehat{W}_{\ve,t}\|_2^2-\frac{\mu}{q}s^{q\gamma_q-q}\|\widehat{W}_{\ve,t}\|_q^q-\frac{1}{6}\|\widehat{W}_{\ve,t}\|_6^6\notag\\
&\leq& m_{\mu,a}^++\mathcal{E}_\mu(tW_\ve)\\
&&+t(\lambda_{\mu,+}\int_{\bbr^N}u_{\mu,+}W_\ve+\frac{\mu}{a^2}(\gamma_q-1)\|\widehat{W}_{\ve,t}\|_q^q\int_{\bbr^N}u_{\mu,+}W_\ve)+O(\ve^{2}\ln\frac{1}{\ve})\\
&=& m_{\mu,a}^++\mathcal{E}_\mu(tW_\ve)+O(\ve^{N-2})\\
&\leq&m_{\mu,a}^++\frac{1}{N}S^{\frac{N}{2}}-C\ve^{N-\frac{(N-2)q}{2}}+O(\ve^{2}\ln\frac{1}{\ve})\\
&<&m_{\mu,a}^++\frac{1}{N}S^{\frac{N}{2}}
\end{eqnarray*}
for $t_0^{-1}\leq t\leq t_0$ by taking $\ve>0$ sufficiently small since $N\geq4$ and $q>2$.  Our proof is slightly simpler than that of \cite{JL201} since our test function is radial and we do not need other variational formulas of $m_{\mu,a}^-$.
\end{remark}

For every $c>0$ such that $\mu c^{q-q\gamma_q}<\alpha_{N,q}$, let $u\in\mathcal{P}_\pm^{c,\mu}$, then $v_b=\frac{b}{c}u\in\mathcal{S}_b$ for all $b>0$.  By \cite[Lemma~4.2]{S20}, there exists $\tau_\pm(b)>0$ such that
\begin{eqnarray*}
(v_b)_{\tau_\pm(b)}=(\tau_\pm(b))^{\frac{N}{2}}v_b(\tau_\pm(b)x)\in\mathcal{P}_\pm^{b,\mu},
\end{eqnarray*}
where $b>0$ such that $\mu b^{q-q\gamma_q}<\alpha_{N,q}$.
Clearly, $\tau_\pm(c)=1$.
\begin{lemma}\label{lemma0003}
Let $2<q<2+\frac{4}{N}$.  For every $c>0$ such that $\mu c^{q-q\gamma_q}<\alpha_{N,q}$, $\tau'_\pm(c)$ exist and
\begin{eqnarray}\label{eq0009}
\tau'_\pm(c)=\frac{\mu q\gamma_q\|u\|_q^q+2^*\|u\|_{2^*}^{2^*}-2\|\nabla u\|_2^2}{c(2\|\nabla u\|_2^2-\mu q\gamma_q^2\|u\|_q^q-2^*\|u\|_{2^*}^{2^*})}.
\end{eqnarray}
Moreover, $\mathcal{E}_\mu((v_b)_{\tau_\pm(b)})<\mathcal{E}_\mu(u)$ for all $b>c$ such that $\mu b^{q-q\gamma_q}<\alpha_{N,q}$.
\end{lemma}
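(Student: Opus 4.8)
\emph{Proof plan.} The starting point is that, by \cite[Lemma~4.2]{S20}, $\tau_\pm(b)$ is the unique critical point of class ``$\pm$'' of the one–variable fiber map $\tau\mapsto\mathcal{E}_\mu((v_b)_\tau)$, where $(v_b)_\tau=\tau^{N/2}v_b(\tau\,\cdot\,)$, and that in the range $\mu b^{q-q\gamma_q}<\alpha_{N,q}$ this critical point is nondegenerate (equivalently $(v_b)_{\tau_\pm(b)}\notin\mathcal{P}_0^{b,\mu}$). Accordingly I would get the formula for $\tau'_\pm(c)$ from the implicit function theorem, and the monotonicity of $\mathcal{E}_\mu((v_b)_{\tau_\pm(b)})$ from the envelope identity for a critical value.

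For the first assertion, using the scalings $\|\nabla v_b\|_2^2=(b/c)^2\|\nabla u\|_2^2$, $\|v_b\|_q^q=(b/c)^q\|u\|_q^q$, $\|v_b\|_{2^*}^{2^*}=(b/c)^{2^*}\|u\|_{2^*}^{2^*}$, set
\[
F(\tau,b):=\partial_\tau\big[\mathcal{E}_\mu((v_b)_\tau)\big]=\tau\tfrac{b^2}{c^2}\|\nabla u\|_2^2-\mu\gamma_q\tau^{q\gamma_q-1}\tfrac{b^q}{c^q}\|u\|_q^q-\tau^{2^*-1}\tfrac{b^{2^*}}{c^{2^*}}\|u\|_{2^*}^{2^*},
\]
a $C^\infty$ function on $(0,\infty)\times(0,\infty)$ with $F(1,c)=\|\nabla u\|_2^2-\mu\gamma_q\|u\|_q^q-\|u\|_{2^*}^{2^*}=0$ because $u\in\mathcal{P}_{c,\mu}$. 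Substituting this Pohozaev identity into the elementary expression for $\partial_\tau F(1,c)$ gives $\partial_\tau F(1,c)=2\|\nabla u\|_2^2-\mu q\gamma_q^2\|u\|_q^q-2^*\|u\|_{2^*}^{2^*}$, which is exactly the denominator in \eqref{eq0009} and is nonzero — negative on $\mathcal{P}_-^{c,\mu}$ and positive on $\mathcal{P}_+^{c,\mu}$. The implicit function theorem then produces a $C^1$ branch $b\mapsto\tau_\pm(b)$ near $b=c$ with $\tau_\pm(c)=1$, and, since $\partial_bF(1,c)=\tfrac1c\big(2\|\nabla u\|_2^2-\mu q\gamma_q\|u\|_q^q-2^*\|u\|_{2^*}^{2^*}\big)$,
\[
\tau'_\pm(c)=-\frac{\partial_bF(1,c)}{\partial_\tau F(1,c)}=\frac{\mu q\gamma_q\|u\|_q^q+2^*\|u\|_{2^*}^{2^*}-2\|\nabla u\|_2^2}{c\big(2\|\nabla u\|_2^2-\mu q\gamma_q^2\|u\|_q^q-2^*\|u\|_{2^*}^{2^*}\big)},
\]
which is \eqref{eq0009}. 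Running the same argument at any admissible $b$ — the nondegeneracy $(v_b)_{\tau_\pm(b)}\notin\mathcal{P}_0^{b,\mu}$ again being part of \cite[Lemma~4.2]{S20} — shows moreover that $\tau_\pm$ is of class $C^1$ on the whole interval $\{b>0:\mu b^{q-q\gamma_q}<\alpha_{N,q}\}$.

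For the second assertion, put $h(b):=\mathcal{E}_\mu((v_b)_{\tau_\pm(b)})$. Since $\tau_\pm(b)$ is a critical point of $\tau\mapsto\mathcal{E}_\mu((v_b)_\tau)$ and $\tau_\pm\in C^1$, the chain rule reduces to the envelope identity $h'(b)=\partial_b\big[\mathcal{E}_\mu((v_b)_\tau)\big]\big|_{\tau=\tau_\pm(b)}$; differentiating $\mathcal{E}_\mu((v_b)_\tau)=\tfrac{\tau^2}{2}\|\nabla v_b\|_2^2-\tfrac{\mu\tau^{q\gamma_q}}{q}\|v_b\|_q^q-\tfrac{\tau^{2^*}}{2^*}\|v_b\|_{2^*}^{2^*}$ in $b$ with $\tau$ held fixed gives
\[
\partial_b\big[\mathcal{E}_\mu((v_b)_\tau)\big]=\frac1b\Big(\tau^2\|\nabla v_b\|_2^2-\mu\tau^{q\gamma_q}\|v_b\|_q^q-\tau^{2^*}\|v_b\|_{2^*}^{2^*}\Big).
\]
At $\tau=\tau_\pm(b)$ the element $(v_b)_{\tau_\pm(b)}$ lies on $\mathcal{P}_{b,\mu}$, i.e. $\tau^2\|\nabla v_b\|_2^2=\mu\gamma_q\tau^{q\gamma_q}\|v_b\|_q^q+\tau^{2^*}\|v_b\|_{2^*}^{2^*}$, and substituting this in leaves only
\[
h'(b)=\frac{(\gamma_q-1)\,\mu\,\tau_\pm(b)^{q\gamma_q}\,\|v_b\|_q^q}{b}<0,
\]
since $\gamma_q=\tfrac{N(q-2)}{2q}<1$ for every $q<2^*$ and $u$ is nontrivial. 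Hence $h$ is strictly decreasing, so $\mathcal{E}_\mu((v_b)_{\tau_\pm(b)})=h(b)<h(c)=\mathcal{E}_\mu(u)$ for all $b>c$ with $\mu b^{q-q\gamma_q}<\alpha_{N,q}$. The computation itself is elementary one–variable calculus; the only point that must be \emph{verified} rather than merely computed is the global nondegeneracy of the fiber–map critical point throughout the admissible range (so that $\tau_\pm$ is genuinely $C^1$ and the envelope identity is legitimate), and this is precisely what \cite[Lemma~4.2]{S20} provides in the regime $\mu b^{q-q\gamma_q}<\alpha_{N,q}$.
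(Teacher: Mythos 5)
Your proof is correct and follows essentially the same route as the paper: the implicit function theorem applied to the zero set of the $\tau$-derivative of the fiber map (the paper's $\Phi(b,\tau)$, which is $\tau$ times your $F(\tau,b)$) yields \eqref{eq0009}, and differentiating the energy along the branch and substituting the Pohozaev constraint gives the strict decrease. The only cosmetic difference is in the second step, where the paper Taylor-expands the reduced energy at $b=c$ using the computed value of $1+c\tau'(c)$ and then appeals to the arbitrariness of $c$, whereas your envelope identity gives $h'(b)=(\gamma_q-1)\mu\tau_\pm(b)^{q\gamma_q}\|v_b\|_q^q/b<0$ at every admissible $b$ in one stroke, which is a slightly cleaner justification of the conclusion for all $b>c$.
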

\begin{proof}
The proof is mainly inspired by \cite{CZ121}.  Since $(v_b)_{\tau_\pm(b)}\in\mathcal{P}_\pm^{b,\mu}$, we have
\begin{eqnarray*}
(\frac{b}{c}\tau(b))^2\|\nabla u\|_2^2=(\frac{b}{c})^{q}(\tau(b))^{q\gamma_q}\mu\gamma_q\|u\|_q^q+(\frac{b}{c}\tau(b))^{2^*}\|u\|_{2^*}^{2^*}.
\end{eqnarray*}
Now, if we define the function
\begin{eqnarray*}
\Phi(b,\tau)=(\frac{b\tau}{c})^2\|\nabla u\|_2^2-(\frac{b}{c})^{q}\tau^{q\gamma_q}\mu\gamma_q\|u\|_q^q-(\frac{b\tau}{c})^{2^*}\|u\|_{2^*}^{2^*},
\end{eqnarray*}
then $\Phi(b,\tau(b))\equiv0$ for $b>0$ such that $\mu b^{q-q\gamma_q}<\alpha_{N,q}$.  Since $u\in\mathcal{P}_\pm^{c,\mu}$,
\begin{eqnarray*}
\partial_{\tau}\Phi(c,1)=2\|\nabla u\|_2^2-\mu q\gamma_q^2\|u\|_q^q-2^*\|u\|_{2^*}^{2^*}\not=0.
\end{eqnarray*}
It follows from the implicit function theorem that $\tau'_\pm(c)$ exist and \eqref{eq0009} holds.  By \eqref{eq0008} and $q<2^*$, $1-\gamma_q>0$.  Thus, by $u\in\mathcal{P}_\pm^{c,\mu}$,
\begin{eqnarray*}
1+c\tau'(c)&=&1+\frac{\mu q\gamma_q\|u\|_q^q+2^*\|u\|_{2^*}^{2^*}-2\|\nabla u\|_2^2}{2\|\nabla u\|_2^2-\mu q\gamma_q^2\|u\|_q^q-2^*\|u\|_{2^*}^{2^*}}\\
&=&\frac{\mu q\gamma_q(1-\gamma_q)\|u\|_q^q}{2\|\nabla u\|_2^2-\mu q\gamma_q^2\|u\|_q^q-2^*\|u\|_{2^*}^{2^*}}.
\end{eqnarray*}
Since $(v_b)_{\tau_\pm(b)}\in\mathcal{P}_\pm^{b,\mu}$ and $u\in\mathcal{P}_\pm^{c,\mu}$,
\begin{eqnarray*}
\mathcal{E}_\mu((v_b)_{\tau_\pm(b)})&=&(\frac{1}{2}-\frac{1}{q\gamma_q})\|\nabla (v_b)_{\tau(b)}\|_2^2+(\frac{1}{q\gamma_q}-\frac{1}{2^*})\|(v_b)_{\tau(b)}\|_{2^*}^{2^*}\\
&=&(\frac{b}{c}\tau(b))^2(\frac{1}{2}-\frac{1}{q\gamma_q})\|\nabla u\|_2^2+(\frac{b}{c}\tau(b))^{2^*}(\frac{1}{q\gamma_q}-\frac{1}{2^*})\|u\|_{2^*}^{2^*}\\
&=&(\frac{1}{2}-\frac{1}{q\gamma_q})\|\nabla u\|_2^2+(\frac{1}{q\gamma_q}-\frac{1}{2^*})\|u\|_{2^*}^{2^*}+o(b-c)\\
&&+\frac{1+c\tau'(c)}{c}(2(\frac{1}{2}-\frac{1}{q\gamma_q})\|\nabla u\|_2^2+2^*(\frac{1}{q\gamma_q}-\frac{1}{2^*})\|u\|_{2^*}^{2^*})(b-c)\\
&=&\mathcal{E}_\mu(u)-\frac{\mu(1-\gamma_q)\|u\|_q^q}{c}(b-c)\\
&&+o(b-c).
\end{eqnarray*}
Therefore,
\begin{eqnarray*}
\frac{d\mathcal{E}_\mu((v_b)_{\tau_\pm(b)})}{db}|_{b=c}=-\frac{\mu(1-\gamma_q)\|u\|_q^q}{c}<0.
\end{eqnarray*}
Since $c>0$, which satisfies $\mu c^{q-q\gamma_q}<\alpha_{N,q}$, is arbitrary and $(v_b)_{\tau_\pm(b)}\in \mathcal{P}_\pm^{b,\mu}$, we have $\mathcal{E}_\mu((v_b)_{\tau_\pm(b)})<\mathcal{E}_\mu(u)$ for all $b>c$ such that $\mu b^{q-q\gamma_q}<\alpha_{N,q}$.
\end{proof}

With Lemma~\ref{lemma0003} in hands, we can obtain the following.
\begin{proposition}\label{prop0001}
Let $2<q<2+\frac{4}{N}$ and $\mu a^{q-q\gamma_q}<\alpha_{N,q}$.  If $m_{a,\mu}^-<m_{a,\mu}^++\frac{1}{N}S^{\frac{N}{2}}$
then
\begin{eqnarray*}
m_{a,\mu}^-=\inf_{u\in\mathcal{P}_-^{a,\mu}}\mathcal{E}_\mu(u)
\end{eqnarray*}
can be attained by some $u_{a,\mu,-}$ which is real valued, positive, radially symmetric and decreasing in $r=|x|$.  Moreover, \eqref{eq0001} has a second solution $u_{a,\mu,-}$ with some $\lambda_{a,\mu,-}<0$ which is real valued, positive, radially symmetric and radially decreasing.
\end{proposition}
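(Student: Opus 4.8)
The plan is to run a minimization argument on the manifold $\mathcal{P}_-^{a,\mu}$, using the energy gap hypothesis $m_{a,\mu}^-<m_{a,\mu}^++\frac{1}{N}S^{\frac{N}{2}}$ to recover compactness of a minimizing sequence. First I would recall, following Soave's analysis of the fibering maps $\Psi(u,s)$ in \cite{S20}, that on $\mathcal{P}_-^{a,\mu}$ one has $\mathcal{E}_\mu(u)=(\frac{1}{2}-\frac{1}{2^*})\|\nabla u\|_2^2-\mu(\frac{1}{q}-\frac{1}{2^*\gamma_q}\cdot q\gamma_q)\|u\|_q^q$-type expressions that are bounded below on $\mathcal{P}_-^{a,\mu}$ together with a uniform lower bound $\|\nabla u\|_2^2\gtrsim 1$ there (this is the local geometry of $\Psi$ near its local maximum point). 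Hence $m_{a,\mu}^->0$ and $m_{a,\mu}^-$ is finite. By Ekeland's variational principle applied on $\mathcal{P}_-^{a,\mu}$, or more conveniently by constructing a Palais--Smale sequence via the fibering/min-max scheme of Jeanjean \cite{J97} combined with Lemma~\ref{lemma0003} (which shows $m_{b,\mu}^-$ is monotone in $b$ and, crucially, controls how the constrained level varies), one obtains a sequence $\{u_n\}\subset\mathcal{S}_a$ with $\mathcal{E}_\mu(u_n)\to m_{a,\mu}^-$, $\mathcal{E}_\mu|_{\mathcal{S}_a}'(u_n)\to0$, and the additional Pohozaev information $\|\nabla u_n\|_2^2-\mu\gamma_q\|u_n\|_q^q-\|u_n\|_{2^*}^{2^*}\to0$. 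A standard device here — replacing $u_n$ by its Schwarz symmetrization, which is admissible since all three terms of $\mathcal{E}_\mu$ and the $L^2$ constraint behave monotonically under symmetrization — lets me take $\{u_n\}$ to be radial, nonnegative and radially decreasing.

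Next I would extract a weak limit $u_n\rightharpoonup u_0$ in $H^1(\bbr^N)$; boundedness of $\{u_n\}$ in $H^1$ follows from the lower bound on $\mathcal{E}_\mu$ restricted to $\mathcal{P}_-^{a,\mu}$ plus the Pohozaev relation. Because the sequence is radial, the compact embedding $H^1_{rad}(\bbr^N)\hookrightarrow L^q(\bbr^N)$ for $2<q<2^*$ gives $u_n\to u_0$ strongly in $L^q$. The Lagrange multipliers $\lambda_n$, read off from the equation $-\Delta u_n=\lambda_n u_n+\mu u_n^{q-1}+u_n^{2^*-1}+o(1)$ by testing against $u_n$ and using the Pohozaev relation, are bounded and bounded away from zero; passing to a subsequence $\lambda_n\to\lambda_{a,\mu,-}$, and the sign work (testing the limiting equation against $u_0$ and its Pohozaev identity, exactly as in \cite{S20}) forces $\lambda_{a,\mu,-}<0$, which in particular guarantees $u_0\not\equiv0$: a nontrivial weak limit is needed because otherwise $\lambda<0$ would be incompatible with the equation. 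Then $u_0$ solves \eqref{eq0001} with $\lambda_{a,\mu,-}$, so $u_0\in\mathcal{P}_{a,\mu}$, and one checks $u_0\in\mathcal{P}_-^{a,\mu}$ (the limit cannot land in $\mathcal{P}_0^{a,\mu}$, which is empty under $\mu a^{q-q\gamma_q}<\alpha_{N,q}$, nor in $\mathcal{P}_+^{a,\mu}$ by the energy ordering $m_{a,\mu}^+<m_{a,\mu}^-$).

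The main obstacle — and the reason the hypothesis $m_{a,\mu}^-<m_{a,\mu}^++\frac{1}{N}S^{\frac{N}{2}}$ is present — is ruling out loss of mass/energy through the critical Sobolev term: a priori $v_n:=u_n-u_0$ could carry a nonvanishing bubble, with $\|\nabla v_n\|_2^2\to\ell\geq0$ and (by the Brezis--Lieb lemma applied to $\|\cdot\|_{2^*}^{2^*}$, and $L^q$-strong convergence killing the subcritical term) $\|v_n\|_{2^*}^{2^*}\to\ell$ as well, with $\ell\geq S^{N/2}$ if $\ell>0$ by the Sobolev inequality \eqref{eq0048}. Splitting the energy, $m_{a,\mu}^-=\mathcal{E}_\mu(u_0)+\frac{1}{N}\ell+o(1)$, while $\mathcal{E}_\mu(u_0)\geq m_{a,\mu}^+$ because $u_0\in\mathcal{S}_{a'}$ for some $a'\le a$ and one can scale/dilate $u_0$ onto $\mathcal{P}_+^{a,\mu}$ using Lemma~\ref{lemma0003} to get $\mathcal{E}_\mu(u_0)\ge m_{a',\mu}^+\ge m_{a,\mu}^+$ — here the monotonicity of $b\mapsto m_{b,\mu}^+$ from Lemma~\ref{lemma0003} is exactly what is needed. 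Combining gives $m_{a,\mu}^-\ge m_{a,\mu}^++\frac{1}{N}S^{N/2}$ unless $\ell=0$, contradicting the hypothesis; hence $\ell=0$, $u_n\to u_0$ strongly in $D^{1,2}$ and (together with the $L^2$ mass) in $H^1$, so $\|u_0\|_2=a$ and $\mathcal{E}_\mu(u_0)=m_{a,\mu}^-$. Finally, strong maximum principle and elliptic regularity upgrade $u_0$ to a positive, radially symmetric, radially decreasing classical solution; this is the second solution $u_{a,\mu,-}$, distinct from $u_{a,\mu,+}$ since $\mathcal{E}_\mu(u_{a,\mu,-})=m_{a,\mu}^->m_{a,\mu}^+=\mathcal{E}_\mu(u_{a,\mu,+})$.
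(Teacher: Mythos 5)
Your overall strategy (recover compactness of an approximating sequence from the gap $m_{a,\mu}^-<m_{a,\mu}^++\frac{1}{N}S^{\frac{N}{2}}$, split off a possible Sobolev bubble by Brezis--Lieb/Struwe, and use the monotonicity of $b\mapsto m_{b,\mu}^{\pm}$ from Lemma~\ref{lemma0003} to handle escaping mass) is sound and is genuinely different in its mechanics from the paper's argument: the paper works with a plain minimizing sequence in $\mathcal{P}_-^{a,\mu}$, with no Palais--Smale information at all, and replaces the profile decomposition by a case analysis on the dilations $(u_n)_{s}$ and $(v_n)_{s_n}$, exploiting $\mathcal{E}_\mu(u_n)\geq\mathcal{E}_\mu((u_n)_{s})$ for suitable $s$, the inequality $\mathcal{E}_0((v_n)_{\tau_0})\geq0$, and Lemma~\ref{lemma0003}. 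Your route is closer to that of Jeanjean--Le and to Soave's supercritical case; it yields a cleaner energy identity but requires a genuine constrained Palais--Smale sequence, which is exactly where your write-up runs into trouble.

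Two steps as written do not hold up. First, the nonvanishing argument is circular: from $u_n\in\mathcal{P}_{a,\mu}$ and the approximate Lagrange relation one gets $\lambda_n a^2=\mu(\gamma_q-1)\|u_n\|_q^q+o(1)$, so $\lambda_n$ is bounded away from $0$ precisely when $\|u_n\|_q$ is, i.e.\ (by radial compactness in $L^q$) precisely when $u_0\neq0$. You cannot first conclude $\lambda<0$ and then infer $u_0\neq0$; and if $u_0=0$ the sequence can be a pure concentrating Aubin--Talenti bubble, for which $\lambda_n\to0$ and nothing is ``incompatible with the equation''. The correct exclusion is the paper's: if $u_0=0$ then $u_n\to0$ in $L^q(\bbr^N)$, the constraint gives $\|\nabla u_n\|_2^2=\|u_n\|_{2^*}^{2^*}+o_n(1)$, and by \eqref{eq0048} either $m_{a,\mu}^-=0$ or $m_{a,\mu}^-\geq\frac{1}{N}S^{\frac{N}{2}}$; the former contradicts the uniform lower bound $\mathcal{E}_\mu(u)\gtrsim1$ on $\mathcal{P}_-^{a,\mu}$, and the latter contradicts the hypothesis $m_{a,\mu}^-<m_{a,\mu}^++\frac{1}{N}S^{\frac{N}{2}}<\frac{1}{N}S^{\frac{N}{2}}$ (recall $m_{a,\mu}^+<0$ here). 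Second, Schwarz symmetrization of a Palais--Smale sequence does not preserve the Palais--Smale property: rearrangement controls the norms entering $\mathcal{E}_\mu$ but destroys the information $\mathcal{E}_\mu|_{\mathcal{S}_a}'(u_n)\to0$. You must either work in $H_{rad}^1(\bbr^N)$ from the outset and invoke symmetric criticality, or do as the paper does and symmetrize a mere minimizing sequence (reprojecting onto $\mathcal{P}_-^{a,\mu}$, which does not increase the energy) and then forgo the PS structure entirely. Both gaps are repairable by standard arguments, but as written the proof of $u_0\neq0$ --- the step on which everything else rests --- is not valid.
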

\begin{proof}
Let $\{u_n\}\subset\mathcal{P}_-^{a,\mu}$ be a minimizing sequence.  Then by taking $|u_n|$ and adapting the Schwarz symmetrization to $|u_n|$ if necessary, we can obtain a new minimizing sequence, say $\{u_n\}$ again, such that $u_n$ are all real valued, nonnegative, radially symmetric and decreasing in $r=|x|$.  Since $\{u_n\}\subset\mathcal{P}_-^{a,\mu}$, we have
\begin{eqnarray}\label{eq0011}
\mathcal{E}_\mu(u_n)=\frac{\mu}{q}(\frac{q\gamma_q}{2}-1)\|u_n\|_q^q+\frac{1}{N}\|u_n\|_{2^*}^{2^*}.
\end{eqnarray}
Thus, by the H\"older and Young inequalities and $\{u_n\}\subset\mathcal{P}_-^{a,\mu}$ again, we known that $\{u_n\}$ is bounded in $H^1(\bbr^N)$ and thus, $u_n\rightharpoonup u_0$ weakly in $H^1(\bbr^N)$ as $n\to\infty$ up to a subsequence.  Since $u_n$ are all radial, by Struss's radial lemma (cf. \cite[Lemma A.IV, Theorem A.I']{BL83} or \cite[Lemma~3.1]{MM14}) and the Sobolev embedding theorem, $u_n\to u_0$ strongly in $L^q(\bbr^N)$ as $n\to\infty$ up to a subsequence.  Without loss of generality, we assume that $u_n\rightharpoonup u_0$ weakly in $H^1(\bbr^N)$ and $u_n\to u_0$ strongly in $L^q(\bbr^N)$ as $n\to\infty$.  We claim that $u_0\not=0$.  If not, then $u_n\to 0$ strongly in $L^q(\bbr^N)$ as $n\to\infty$.  It follows from $\{u_n\}\subset\mathcal{P}_-^{a,\mu}$ that
\begin{eqnarray*}
\|\nabla u_n\|_2^2=\|u_n\|_{2^*}^{2^*}+o_n(1),
\end{eqnarray*}
which together with the Sobolev inequality~\eqref{eq0048}, implies that either $u_n\to0$ strongly in $D^{1,2}(\bbr^N)$ as $n\to\infty$ or $\|\nabla u_n\|_2^2=\|u_n\|_{2^*}^{2^*}+o_n(1)\geq S^{\frac{N}{2}}+o_n(1)$.  Hence, by \eqref{eq0011}, either $m_{a,\mu}^-=0$ or $m_{a,\mu}^-\geq\frac{1}{N}S^{\frac{N}{2}}$, which contradicts $\mathcal{E}_\mu(u)\gtrsim1$ for $u\in\mathcal{P}_-^{c,\mu}$ and Lemma~\ref{lemma0002}.  We remark that $\mathcal{E}_\mu(u)\gtrsim1$ for $u\in\mathcal{P}_-^{c,\mu}$ comes from similar arguments as used for \cite[Lemma~5.7]{S201}.  Therefore, we must have $u_0\not=0$.  Let $v_n=u_n-u_0$.
Then there are two cases:
\begin{enumerate}
\item[$(i)$]\quad $v_n\to0$ strongly in $H^1(\bbr^N)$ as $n\to\infty$ up to a subsequence.
\item[$(ii)$]\quad $\|\nabla v_n\|_2^2+\|v_n\|_2^2\gtrsim1$.
\end{enumerate}
In the case~$(i)$, $u_0\in\mathcal{P}_-^{a,\mu}$ and $m_{a,\mu}^-$ is attained by $u_0$ which is real valued, radially symmetric, nonnegative and decreasing in $r=|x|$.  By \cite[Proposition~1.5]{S20}, $u_0$ is a solution of \eqref{eq0001} with some $\lambda_0\in\bbr$ which appears as a Lagrange multiplier.  By multiplying \eqref{eq0001} with $u_0$ and integrating by parts, and using $u_0\in\mathcal{P}_-^{a,\mu}$, we have
\begin{eqnarray*}
\lambda_0a^2=\mu(\gamma_q-1)\|u_0\|_q^q<0,
\end{eqnarray*}
which implies $\lambda_0<0$.  Now, by the maximum principle and classical elliptic estimates, we know that $u_0$ is positive.  It remains to consider the case~$(ii)$.  Let $\|u_0\|_2^2=t_0^2$, then by the Fatou lemma, $0<t_0\leq a$.  There are two subcases:
\begin{enumerate}
\item[$(ii_1)$]\quad $\|v_n\|_{2^*}\to0$ as $n\to\infty$ up to a subsequence.
\item[$(ii_2)$]\quad $\|v_n\|_{2^*}^{2^*}\gtrsim1$.
\end{enumerate}
In the subcase~$(ii_1)$, by \cite[Lemma~4.2]{S20}, there exists $s_0>0$ such that $(u_0)_{s_0}\in\mathcal{P}_-^{t_0,\mu}$.  By \cite[Lemma~4.2]{S20} once more, $\{u_n\}\subset\mathcal{P}_-^{a,\mu}$ and $u_n\to u_0$ strongly in $L^{2^*}(\bbr^N)\cap L^q(\bbr^N)$ as $n\to\infty$ up to a subsequence,
\begin{eqnarray*}
m_{a,\mu}^-+o_n(1)=\mathcal{E}_\mu(u_n)\geq\mathcal{E}_\mu((u_n)_{s_0})\geq\mathcal{E}_\mu((u_0)_{s_0})+o_n(1).
\end{eqnarray*}
By Lemma~\ref{lemma0003}, we have $m_{t_0,\mu}^-\geq m_{a,\mu}^-$.  Thus, $\mathcal{E}_\mu((u_0)_{s_0})=m_{t_0,\mu}^-$ and $m_{t_0,\mu}^-= m_{a,\mu}^-$.  If $t_0<a$ then by taking $(u_0)_{s_0}$ as the test function in the proof of Lemma~\ref{lemma0003}, we know that $m_{t_0,\mu}^->m_{a,\mu}^-$, which is a contradiction.  Thus, in the subcase~$(ii_1)$, we must have $t_0=a$ and so that $m_{a,\mu}^-$ is attained by $(u_0)_{s_0}$ which is real valued, radially symmetric, nonnegative and decreasing in $r=|x|$.  As above, we can show that $(u_0)_{s_0}$ is positive and $(u_0)_{s_0}$ is a solution of \eqref{eq0001} with some $\lambda_0'<0$.  It remains to consider the subcase~$(ii_2)$.  Let
\begin{eqnarray*}
s_n=\bigg(\frac{\|\nabla v_n\|_2^2}{\|v_n\|_{2^*}^{2^*}}\bigg)^{\frac{1}{2^*-2}}.
\end{eqnarray*}
Then in the subcase~$(ii_2)$, $s_n\lesssim1$ and by the Sobolev inequality~\eqref{eq0048},
\begin{eqnarray*}
\|\nabla(v_n)_{s_n}\|_2^2=\|(v_n)_{s_n}\|_{2^*}^{2^*}\geq\frac{1}{N}S^{\frac{N}{2}}.
\end{eqnarray*}
Since $0<t_0\leq a$, by \cite[Lemma~4.2]{S20}, there exists $\tau_0>0$ such that $(u_0)_{\tau_0}\in\mathcal{P}_-^{t_0,\mu}$.  We claim that $s_n\geq\tau_0$ up to a subsequence.  Suppose the contrary that $s_n<\tau_0$ for all $n$.  Then by \cite[Lemma~4.2]{S20} once more, the Brezis-Lieb lemma (cf. \cite[Lemma~1.32]{W96}), Lemma~\ref{lemma0003}, the fact that $u_n\to u_0$ strongly in $L^q(\bbr^N)$ as $n\to\infty$ and the boundedness of $\{s_n\}$,
\begin{eqnarray*}
m_{a,\mu}^-+o_n(1)&=&\mathcal{E}_\mu(u_n)\\
&\geq&\mathcal{E}_\mu((u_n)_{s_n})\\
&=&\mathcal{E}_\mu((u_0)_{s_n})+\mathcal{E}_0((v_n)_{s_n})+o_n(1)\\
&\geq&m_{t_0,\mu}^++\frac{1}{N}S^{\frac{N}{2}}+o_n(1)\\
&\geq&m_{a,\mu}^++\frac{1}{N}S^{\frac{N}{2}}+o_n(1),
\end{eqnarray*}
which is impossible.  Thus, we must have $s_n\geq\tau_0$ up to a subsequence.  Without loss of generality, we may assume that $s_n\geq\tau_0$ for all $n\in\bbn$.  Again, by \cite[Lemma~4.2]{S20}, the Brezis-Lieb lemma (cf. \cite[Lemma~1.32]{W96}) and the fact that $u_n\to u_0$ strongly in $L^q(\bbr^N)$ as $n\to\infty$,
\begin{eqnarray*}
m_{a,\mu}^-+o_n(1)=\mathcal{E}_\mu(u_n)\geq\mathcal{E}_\mu((u_n)_{\tau_0})=\mathcal{E}_\mu((u_0)_{\tau_0})+\mathcal{E}_0((v_n)_{\tau_0})+o_n(1).
\end{eqnarray*}
Since $s_n\geq\tau_0$, by \cite[Proposition~2.2]{S20}, $\mathcal{E}_0((v_n)_{\tau_0})\geq0$, which, together with Lemma~\ref{lemma0003}, implies that $t_0=a$ and $m_{a,\mu}^-$ is attained by $(u_0)_{\tau_0}$.  Clearly,  $(u_0)_{\tau_0}$ is real valued, radially symmetric, nonnegative and decreasing in $r=|x|$.  As above, we can show that $(u_0)_{\tau_0}$ is positive and $(u_0)_{\tau_0}$ is a solution of \eqref{eq0001} with some $\lambda_0''<0$.  Therefore, we have proved that $m_{a,\mu}^-$ can always be attained by some $u_{a,\mu,-}$ which is real valued, radially symmetric, positive and decreasing in $r=|x|$.  By \cite[Proposition~1.5]{S20}, \eqref{eq0001} has a second solution $u_{a,\mu,-}$ which is real valued, radially symmetric, positive and decreasing in $r=|x|$.
\end{proof}

Our next goal in this section is to prove the existence and nonexistence of ground states for $\mu a^{q-q\gamma_q}\geq\alpha_{N,q}$ in the $L^2$-critical and supercritical cases, which gives partial answers to the question~$(Q_2)$.
In these two cases, $2+\frac{4}{N}\leq q<2^*$, which implies
\begin{eqnarray*}\label{eq0058}
q\gamma_q\geq2.
\end{eqnarray*}
We recall that the constant $\alpha_{N,q}$ is given by \cite[Theorem~1.1]{S20}.  For $q=2+\frac{4}{N}$, by \cite[(5,1)]{S20},
\begin{eqnarray}\label{eq1101}
\alpha_{N,q}=C_{N,q}^{-q}(1+\frac{2}{N})=\frac{1}{C_{N,q}^q\gamma_q},
\end{eqnarray}
where $C_{N,q}$ is the optimal constant in the Gagliardo--Nirenberg inequality~\eqref{eq0059}.
\begin{lemma}\label{lemma0005}
Let $N\geq3$ and $2+\frac{4}{N}\leq q<2^*$.  Then $m_{a,\mu}^-$ is strictly decreasing for $0<\mu<a^{q\gamma_q-q}\alpha_{N,q}$ and is nonincreasing for $\mu\geq a^{q\gamma_q-q}\alpha_{N,q}$, where $m_{a,\mu}^-$ is given by \eqref{eq0045}.  Moreover, $0<m_{a,\mu}^-<\frac{1}{N}S^{\frac{N}{2}}$ for all $\mu>0$ in the case of $2+\frac{4}{N}< q<2^*$ while, $m_{a,\mu}^-=0$ for $\mu\geq a^{q\gamma_q-q}\alpha_{N,q}$ in the case of $q=2+\frac{4}{N}$.
\end{lemma}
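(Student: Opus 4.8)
The strategy is to read $m_{a,\mu}^-$ off the fibering map. For $u\in\mathcal S_a$ and $t>0$ set $(u)_t(x)=t^{N/2}u(tx)$ and
\[
g_u^\mu(t):=\mathcal E_\mu((u)_t)=\frac{t^2}{2}\|\nabla u\|_2^2-\frac{\mu t^{q\gamma_q}}{q}\|u\|_q^q-\frac{t^{2^*}}{2^*}\|u\|_{2^*}^{2^*}.
\]
By \cite[Lemma~4.2, Proposition~2.2]{S20}, $(u)_t\in\mathcal P_-^{a,\mu}$ exactly when $t$ is a nondegenerate interior maximum of $g_u^\mu$, and $m_{a,\mu}^-=\inf\{\max_{t>0}g_u^\mu(t)\mid u\in\mathcal S_a,\ g_u^\mu\ \text{has an interior maximum}\}$. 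Since $q\ge2+\frac4N$ forces $q\gamma_q\ge2$, I would split into two cases. When $2+\frac4N<q<2^*$ we have $2<q\gamma_q<2^*$, so $g_u^\mu$ has a unique critical point $t_u>0$, which is its global maximum; hence every $u\in\mathcal S_a$ is admissible and $m_{a,\mu}^-=\inf_{u\in\mathcal S_a}\max_{t>0}g_u^\mu(t)$. When $q=2+\frac4N$ we have $q\gamma_q=2$, so $g_u^\mu(t)=\frac{t^2}{2}A_u^\mu-\frac{t^{2^*}}{2^*}\|u\|_{2^*}^{2^*}$ with $A_u^\mu:=\|\nabla u\|_2^2-\mu\gamma_q\|u\|_q^q$; here $g_u^\mu$ has an interior maximum iff $A_u^\mu>0$, and then $\max_{t>0}g_u^\mu(t)=\frac1N(A_u^\mu)^{N/2}\|u\|_{2^*}^{-N}$, so $m_{a,\mu}^-=\inf\{\frac1N(A_u^\mu)^{N/2}\|u\|_{2^*}^{-N}\mid u\in\mathcal S_a,\ A_u^\mu>0\}$.

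For the monotonicity the crucial observation is that, for fixed $u\in\mathcal S_a$ and $\mu_1<\mu_2$, one has $g_u^{\mu_2}(t)<g_u^{\mu_1}(t)$ for every $t>0$ because $\|u\|_q^q>0$. In the supercritical range every $u\in\mathcal S_a$ is admissible for every $\mu$, so taking $\inf_u\max_t$ gives $m_{a,\mu_2}^-\le m_{a,\mu_1}^-$; thus $m_{a,\mu}^-$ is nonincreasing on all of $(0,\infty)$. In the critical range, when $\mu a^{q-q\gamma_q}<\alpha_{N,q}$ the Gagliardo--Nirenberg inequality~\eqref{eq0059} together with~\eqref{eq1101} gives $A_u^\mu\ge\|\nabla u\|_2^2\big(1-\mu a^{q-q\gamma_q}/\alpha_{N,q}\big)>0$ for all $u\in\mathcal S_a$, so again the admissible set is all of $\mathcal S_a$ and the same inequality yields $m_{a,\mu_2}^-\le m_{a,\mu_1}^-$ for $\mu_1<\mu_2<a^{q\gamma_q-q}\alpha_{N,q}$. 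To make the decrease strict on $(0,a^{q\gamma_q-q}\alpha_{N,q})$ I would use that $m_{a,\mu_1}^-$ is attained there by some $u^*\in\mathcal P_-^{a,\mu_1}$ (\cite[Theorem~1.1]{S20}): then $u^*$ is admissible for $\mu_2$ as well, and $m_{a,\mu_2}^-\le\max_{t>0}g_{u^*}^{\mu_2}(t)<\max_{t>0}g_{u^*}^{\mu_1}(t)=\mathcal E_{\mu_1}(u^*)=m_{a,\mu_1}^-$.

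It remains to identify the values. In the supercritical case, for $u\in\mathcal P_-^{a,\mu}$ the Pohozaev identity $\|\nabla u\|_2^2=\mu\gamma_q\|u\|_q^q+\|u\|_{2^*}^{2^*}$ gives
\[
\mathcal E_\mu(u)=\frac{q\gamma_q-2}{2q\gamma_q}\,\mu\gamma_q\|u\|_q^q+\frac1N\|u\|_{2^*}^{2^*}\ \ge\ \min\!\Big(\frac{q\gamma_q-2}{2q\gamma_q},\frac1N\Big)\|\nabla u\|_2^2,
\]
while feeding the same identity into~\eqref{eq0059} and the Sobolev inequality~\eqref{eq0048} yields $\|\nabla u\|_2^2\le\mu\gamma_qC_{N,q}^qa^{q-q\gamma_q}\|\nabla u\|_2^{q\gamma_q}+S^{-2^*/2}\|\nabla u\|_2^{2^*}$, hence $\|\nabla u\|_2\gtrsim1$ uniformly (as $q\gamma_q,2^*>2$); so $m_{a,\mu}^->0$. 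The bound $m_{a,\mu}^-<\frac1NS^{N/2}$ holds for $\mu a^{q-q\gamma_q}<\alpha_{N,q}$ by~\cite[Theorem~1.1]{S20} and then for every $\mu>0$ by choosing $\mu'\in(0,\mu)$ with $\mu' a^{q-q\gamma_q}<\alpha_{N,q}$ and using $m_{a,\mu}^-\le m_{a,\mu'}^-$. In the critical case with $\mu\ge a^{q\gamma_q-q}\alpha_{N,q}$, the displayed identity reduces to $\mathcal E_\mu(u)=\frac1N\|u\|_{2^*}^{2^*}\ge0$ on $\mathcal P_-^{a,\mu}$, so $m_{a,\mu}^-\ge0$; for the matching upper bound I would join, by the normalized segment in $\mathcal S_a$ (a continuous path of positive functions), the Gagliardo--Nirenberg optimizer $u_0=(a/\|\phi_0\|_2)\phi_0\in\mathcal S_a$, for which $A_{u_0}^\mu\le0$ (with equality precisely when $\mu=a^{q\gamma_q-q}\alpha_{N,q}$), to a positive, widely separated multi-bump $u_1\in\mathcal S_a$, for which $\|u_1\|_q^q\to0$ while $\|\nabla u_1\|_2^2$ stays bounded away from $0$, so $A_{u_1}^\mu>0$. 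Along this path $A^\mu$ and $\|\cdot\|_{2^*}$ vary continuously and $A^\mu$ changes from $\le0$ to $>0$, so there exist $u\in\mathcal S_a$ with $A_u^\mu>0$ arbitrarily small and $\|u\|_{2^*}$ bounded away from $0$; hence $\frac1N(A_u^\mu)^{N/2}\|u\|_{2^*}^{-N}\to0$, giving $m_{a,\mu}^-=0$ (and in particular $\mathcal P_-^{a,\mu}\ne\varnothing$). Thus $m_{a,\mu}^-\equiv0$ on $[a^{q\gamma_q-q}\alpha_{N,q},\infty)$ in the critical case, which is trivially nonincreasing there.

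I expect the last step to be the main obstacle: producing elements of $\mathcal P_-^{a,\mu}$ of vanishing energy above the critical mass. Neither a pure Aubin--Talanti bubble nor a pure Gagliardo--Nirenberg optimizer will do; what is needed is a homotopy in $\mathcal S_a$ between an optimizer and a spread-out profile, because --- and this is special to $q=2+\frac4N$ --- the dilation $(u)_t$ multiplies $A_u^\mu$ by $t^2$ without changing its sign, so one must deform the \emph{shape} of $u$, not merely rescale it, to force $A_u^\mu$ across $0$.
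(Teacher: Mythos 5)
Your proof is correct and shares the paper's overall skeleton (fibering-map characterization of $m_{a,\mu}^-$, monotonicity in $\mu$, Pohozaev plus Gagliardo--Nirenberg and Sobolev for the bounds), but two steps are implemented genuinely differently. First, you obtain monotonicity by comparing $g_u^{\mu_1}$ and $g_u^{\mu_2}$ pointwise and passing to $\inf_u\max_t$, whereas the paper routes this through the mass-rescaling computation of its Lemma~3.3 and the argument of \cite[Lemma~8.2]{S20}; your version is more direct, and it is sound because in the supercritical case the admissible set is all of $\mathcal{S}_a$ for every $\mu$, while in the critical case you only ever compare two values of $\mu$ lying on the same side of the threshold, so the shrinking of the admissible set never works against you. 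Second, and more substantially, for $q=2+\frac{4}{N}$ and $\mu\geq a^{q\gamma_q-q}\alpha_{N,q}$ the paper shows $m_{a,\mu}^-\leq 0$ by taking a normalized Gagliardo--Nirenberg minimizing sequence, letting the fibering parameter $t_n(\mu)\to 0$ as $\mu$ approaches the threshold from below, and then invoking monotonicity across the threshold; you instead work at each fixed $\mu$ above the threshold and run an intermediate-value argument along a normalized segment in $\mathcal{S}_a$ joining the optimizer $u_0$ (where $A_{u_0}^\mu\leq 0$) to a spread-out multi-bump $u_1$ (where $A_{u_1}^\mu>0$), producing admissible $u$ with $A_u^\mu\to 0^+$ and $\|u\|_{2^*}$ bounded below, hence $\frac{1}{N}(A_u^\mu)^{N/2}\|u\|_{2^*}^{-N}\to 0$. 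Your construction has the added merit of exhibiting $\mathcal{P}_-^{a,\mu}\neq\emptyset$ above the threshold via a quantitative mechanism (the ratio $\|\nabla u\|_2^2/\|u\|_q^q$ blows up along multi-bumps at rate $k^{q/2-1}$), which is cleaner than the paper's eigenvalue argument; the price is that you must justify the continuity and positivity of the normalized segment, which you correctly flag and which does hold since $u_0,u_1>0$. Your derivation of $m_{a,\mu}^-<\frac{1}{N}S^{\frac{N}{2}}$ from \cite[Theorem~1.1]{S20} at small $\mu$ plus monotonicity, in place of the paper's citation of \cite[Lemma~6.4]{S20}, is also fine.
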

\begin{proof}
Modified the proof of \cite[Lemma~8.2]{S20} in a trivial way (or by Lemma~\ref{lemma0003} and \cite[Theorem~1.1]{S20}), we can show that $m_{a,\mu}^-$ is strictly decreasing for $0<\mu<a^{q\gamma_q-q}\alpha_{N,q}$.
For $\mu\geq a^{q\gamma_q-q}\alpha_{N,q}$, let us consider the fibering map
\begin{eqnarray*}
\Psi_u(t)=\frac{t^2}{2}\|\nabla u\|_2^2-\frac{\mu t^{q\gamma_q}}{q}\|u\|_q^q-\frac{t^{2^*}}{2^*}\|u\|_{2^*}^{2^*},
\end{eqnarray*}
as that in \cite{S20}.  For $2+\frac{4}{N}< q<2^*$, it has been proved in \cite[Lemma~6.1]{S20} that for every $u\in\mathcal{S}_a$, there exists $t_u>0$ such that
$\Psi_u(t)$ is strictly increasing in $(0, t_u)$, is strictly decreasing in $(t_u, +\infty)$ and
\begin{eqnarray*}
(u)_{t_u}=t_u^{\frac{N}{2}}u(t_ux)\in\mathcal{P}_-^{a, \mu}.
\end{eqnarray*}
Moreover, by \cite[Lemma~6.2]{S20}, we have $m_{a,\mu}^->0$ for all $\mu>0$ in the $L^2$-supercritical case $2+\frac{4}{N}<q<2^*$.  It follows that we can always choose $v_\ve\in\mathcal{P}_-^{a,\mu}$ such that $\mathcal{E}_\mu(v_\ve)<m_{a,\mu}^-+\ve$ in the $L^2$-supercritical case $2+\frac{4}{N}<q<2^*$.  Then by similar arguments as used for \cite[Lemma~8.2]{S20} (or by Lemma~\ref{lemma0003}), we have
\begin{eqnarray*}
m_{a,\mu'}^-<m_{a,\mu}^-+\ve\quad\text{for all }\mu'>\mu.
\end{eqnarray*}
Since $\ve>0$ and $\mu\geq a^{q\gamma_q-q}\alpha_{N,q}$ are arbitrary, $m_{a,\mu}^-$ is nonincreasing for $\mu\geq a^{q\gamma_q-q}\alpha_{N,q}$ in the $L^2$-supercritical case $2+\frac{4}{N}<q<2^*$.  It follows from \cite[Lemma~6.4]{S20} that $m_{a,\mu}^-<\frac{1}{N}S^{\frac{N}{2}}$ for all $\mu>0$.

In the $L^2$-critical case $q=2+\frac{4}{N}$, since
\begin{eqnarray*}
\sup_{u\in\mathcal{S}_a}\frac{\|\nabla u\|_2}{\|u\|_q}=+\infty.
\end{eqnarray*}
For all $\mu>0$, we can always choose $u\in\mathcal{S}_a$ such that $\frac{\|\nabla u\|_2}{\|u\|_q}>\mu\gamma_q$.  Indeed, if $\sup_{u\in\mathcal{S}_a}\frac{\|\nabla u\|_2}{\|u\|_q}\lesssim1$, then by the Gagliardo-Nirenberg inequality,
\begin{eqnarray*}
\|\nabla u\|_2\lesssim\|u\|_q\lesssim\|\nabla u\|_2^{\gamma_q}\quad\text{for all }u\in\mathcal{S}_a,
\end{eqnarray*}
which implies
\begin{eqnarray*}
\sup_{u\in\mathcal{S}_a}\|\nabla u\|_2\lesssim1.
\end{eqnarray*}
It is impossible since in any ball $B_R(0)$, the eigenvalue problem $-\Delta u=\lambda u$, with Dirichlet boundary conditions, has a sequence of eigenvalues $\lambda_j\to+\infty$ as $j\to\infty$.
We note that $q\gamma_q=2$ in the $L^2$-critical case $q=2+\frac{4}{N}$.  Thus,
\begin{eqnarray*}
\Psi_u'(t)=(\|\nabla u\|_2^2-\mu\gamma_q\|u\|_q^q)t-t^{2^*-1}\|u\|_{2^*}^{2^*}=0
\end{eqnarray*}
has a unique solution $t_u>0$ for $u\in\mathcal{S}_a$ such that $\frac{\|\nabla u\|_2}{\|u\|_q}>\mu\gamma_q$.  Moreover, $\Psi_u(t)$ is strictly increasing in $(0, t_u)$, is strictly decreasing in $(t_u, +\infty)$ and
\begin{eqnarray*}
(u)_{t_u}=t_u^{\frac{N}{2}}u(t_ux)\in\mathcal{P}_{a, \mu}=\mathcal{P}_-^{a, \mu}.
\end{eqnarray*}
Thus, $\mathcal{P}_{a, \mu}=\mathcal{P}_-^{a, \mu}\not=\emptyset$ and $\Psi_u(t_u)=\max_{t\geq0}\Psi_u(t)$
for all $\mu>0$ and all $u\in\mathcal{S}_a$ such that $\frac{\|\nabla u\|_2}{\|u\|_q}>\mu\gamma_q$.  Now, as in the $L^2$-supercritical case $2+\frac{4}{N}<q<2^*$, by similar arguments as used for \cite[Lemma~8.2]{S20}, we can show that $m_{a,\mu}^-$ is nonincreasing for $\mu\geq a^{q\gamma_q-q}\alpha_{N,q}$ in the $L^2$-critical case $q=2+\frac{4}{N}$.  It remains to prove that $m_{a,\mu}^-=0$ for $\mu\geq a^{q\gamma_q-q}\alpha_{N,q}$ in the case of $q=2+\frac{4}{N}$.  Let $\{\varphi_n\}$ be the minimizing sequence of the Gagliardo-Nirenberg inequality~\eqref{eq0059}.  Then by scaling $\frac{at_n^{\frac{N}{2}}}{\|\varphi_n\|_2}\varphi_n(t_nx)$ if necessary, we may assume that $\|\varphi_n\|_2^2=a^2$, $\|\varphi_n\|_q^q=1$ and $\|\nabla \varphi_n\|_2^2=C_{N,q}^{-\frac{2}{\gamma_q}}a^{\frac{2(\gamma_q-1)}{\gamma_q}}+o_n(1)$.
Let us consider the following function:
\begin{eqnarray*}
h_{\varphi_n}(\mu,t)&=&t^2(\|\nabla \varphi_n\|_2^2-\mu\gamma_q\|\varphi_n\|_q^q)-t^{2^*}\|\varphi_n\|_{2^*}^{2^*}\\
&=&(C_{N,q}^{-\frac{2}{\gamma_q}}a^{\frac{2(\gamma_q-1)}{\gamma_q}}+o_n(1)-\mu\gamma_q)t^2-t^{2^*}\|\psi\|_{2^*}^{2^*}\\
&=&\gamma_q(\alpha_{N,q}a^{q\gamma_q-q}+o_n(1)-\mu)t^2-t^{2^*}\|\psi\|_{2^*}^{2^*},
\end{eqnarray*}
where we have used \eqref{eq1101}.
By \cite[Lemma~5.1]{S20}, there exists a unique $t_n(\mu)>0$ such that $h_{\varphi_n}(\mu,t_n(\mu))=0$ for $0<\mu<a^{q\gamma_q-q}\alpha_{N,q}$.  Thus, $(\varphi_n)_{t_n(\mu)}\in\mathcal{P}_{a,\mu}$ for $0<\mu<a^{q\gamma_q-q}\alpha_{N,q}$, where $(\varphi_n)_{t_n(\mu)}=[t_n(\mu)]^{\frac{N}{2}}\varphi_n(t_n(\mu)x)$.  Moreover,
since $\|\varphi_n\|_q^q=1$, by the H\"older inequality, $\|\varphi_n\|_{2^*}\gtrsim1$.  It follows that $t(\mu)\to 0$ as $\mu\to a^{q\gamma_q-q}\alpha_{N,q}$, which implies
\begin{eqnarray*}
\mathcal{E}_\mu((\psi)_{t(\mu)})=\frac{1}{N}\|\varphi_n\|_{2^*}^{2^*}[t_n(\mu)]^{2^*}=o_n(1)
\end{eqnarray*}
as $\mu\to a^{q\gamma_q-q}\alpha_{N,q}$ in the $L^2$-critical case $q=2+\frac{4}{N}$.  Thus, we must have $m_{a,\mu}^-\leq0$ for $\mu=a^{q\gamma_q-q}\alpha_{N,q}$.  By the monotone property of $m_{a,\mu}^-$ stated in Lemma~\ref{lemma0003}, $m_{a,\mu}^-\leq0$ for $\mu\geq a^{q\gamma_q-q}\alpha_{N,q}$.
Recall that we always have
\begin{eqnarray}\label{eq0013}
\mathcal{E}_\mu(u)=\frac{1}{N}\|u\|_{2^*}^{2^*}\geq0\quad\text{for all }u\in\mathcal{P}_{a,\mu},
\end{eqnarray}
thus, we must have $m_{a,\mu}^-=0$ for $\mu\geq a^{q\gamma_q-q}\alpha_{N,q}$.
\end{proof}

With Lemma~\ref{lemma0005} in hands, we can obtain the following.
\begin{proposition}\label{prop0002}
Let $N\geq3$ and $2+\frac{4}{N}\leq q<2^*$.
\begin{enumerate}
\item[$(1)$]\quad If $2+\frac{4}{N}<q<2^*$, then $m_{a,\mu}^-$ is attained by same $u_{a, \mu,-}$ which is real valued, positive, radially symmetric and decreasing in $r=|x|$ for all $\mu>0$, and thus, $u_{a, \mu,-}$ is a solution of \eqref{eq0001} for all $\mu>0$ with some $\lambda_{a,\mu,-}<0$.
\item[$(2)$]\quad If $q=2+\frac{4}{N}$, then $m_{a,\mu}^-$ can not be attained and \eqref{eq0001} has no ground states for all $\mu\geq a^{q\gamma_q-q}\alpha_{N,q}$.
\end{enumerate}
\end{proposition}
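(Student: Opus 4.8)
\emph{Part~(1).} By Lemma~\ref{lemma0005}, for $2+\frac4N<q<2^*$ one has $0<m_{a,\mu}^-<\frac1N S^{\frac N2}$ for every $\mu>0$, and in this range $\mathcal{P}_-^{a,\mu}=\mathcal{P}_{a,\mu}$ with each $u\in\mathcal{S}_a$ admitting a unique dilation $(u)_{t_u}\in\mathcal{P}_-^{a,\mu}$ realizing $\max_{t>0}\mathcal{E}_\mu((u)_t)$ (cf. \cite[Lemma~6.1]{S20}); thus $m_{a,\mu}^-$ is a mountain-pass type level on $\mathcal{S}_a$. The plan is to run the compactness argument of \cite[Theorem~1.1(ii)]{S20}, which uses only the bound $m_{a,\mu}^-<\frac1N S^{\frac N2}$ --- now available for \emph{all} $\mu>0$ by Lemma~\ref{lemma0005}. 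Concretely: first I would take a minimizing sequence for $m_{a,\mu}^-$ and symmetrize it (Schwarz symmetrization fixes $\|\cdot\|_2,\|\cdot\|_q,\|\cdot\|_{2^*}$ and does not increase $\|\nabla\cdot\|_2$, hence does not increase $\max_{t>0}\mathcal{E}_\mu((\cdot)_t)$), and then, via Ekeland's principle on $\mathcal{S}_a\cap H^1_{rad}(\bbr^N)$ together with the monotonicity of the fibering maps, promote it to a radial Palais--Smale sequence $\{u_n\}$ for $\mathcal{E}_\mu|_{\mathcal{S}_a}$ at the level $m_{a,\mu}^-$ lying on $\mathcal{P}_-^{a,\mu}$. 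On $\mathcal{P}_{a,\mu}$ one has $\mathcal{E}_\mu(u)=(\tfrac12-\tfrac1{q\gamma_q})\|\nabla u\|_2^2+(\tfrac1{q\gamma_q}-\tfrac1{2^*})\|u\|_{2^*}^{2^*}$ with both coefficients positive (since $q\gamma_q>2$), so $\{u_n\}$ is bounded in $H^1(\bbr^N)$; passing to a subsequence, $u_n\rightharpoonup u_0$ in $H^1(\bbr^N)$ and $u_n\to u_0$ in $L^q(\bbr^N)$ by the compact radial embedding, while the Lagrange multipliers obey $\lambda_n a^2=-\mu(1-\gamma_q)\|u_n\|_q^q+o_n(1)\to\lambda_0=-\mu(1-\gamma_q)a^{-2}\|u_0\|_q^q\le0$.

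The crucial step is to show $u_0\neq0$. If $u_0=0$ then $\|u_n\|_q\to0$, so $u_n\in\mathcal{P}_{a,\mu}$ gives $\|u_n\|_{2^*}^{2^*}=\|\nabla u_n\|_2^2+o_n(1)$, whence $\mathcal{E}_\mu(u_n)=\frac1N\|\nabla u_n\|_2^2+o_n(1)$ by the energy identity, and \eqref{eq0048} forces either $\|\nabla u_n\|_2\to0$ (so $m_{a,\mu}^-=0$, contradicting $m_{a,\mu}^->0$) or $\liminf\|\nabla u_n\|_2^2\ge S^{\frac N2}$ (so $m_{a,\mu}^-\ge\frac1N S^{\frac N2}$, contradicting Lemma~\ref{lemma0005}). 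Hence $u_0\neq0$, so $\lambda_0<0$ strictly and $u_0$ solves $-\Delta u_0=\lambda_0 u_0+\mu|u_0|^{q-2}u_0+|u_0|^{2^*-2}u_0$, which in particular implies $u_0\in\mathcal{P}_{t_0,\mu}$ with $t_0=\|u_0\|_2\le a$ and therefore $\mathcal{E}_\mu(u_0)\ge0$. Writing $v_n=u_n-u_0\rightharpoonup0$, the Brezis--Lieb lemma together with the $L^q$-compactness of the subcritical term shows that if $v_n\not\to0$ in $H^1(\bbr^N)$ then, after rescaling and translating, $\{v_n\}$ concentrates into an Aubin--Talenti bubble --- forcing $m_{a,\mu}^-\ge\mathcal{E}_\mu(u_0)+\frac1N S^{\frac N2}\ge\frac1N S^{\frac N2}$, contradicting Lemma~\ref{lemma0005} --- or produces, along a sequence $|y_n|\to\infty$, a nontrivial finite-energy solution of $-\Delta w+|\lambda_0|w=|w|^{2^*-2}w$, which is impossible by the Pohozaev identity for that equation since $-\lambda_0>0$. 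Therefore $u_n\to u_0$ strongly in $H^1(\bbr^N)$, $u_0\in\mathcal{P}_{a,\mu}=\mathcal{P}_-^{a,\mu}$ attains $m_{a,\mu}^-$, and $u_0$ is nonnegative, radial and nonincreasing in $r=|x|$ as a strong limit of such functions, and positive by the strong maximum principle; setting $(u_{a,\mu,-},\lambda_{a,\mu,-})=(u_0,\lambda_0)$ proves $(1)$. The single genuine obstacle is this loss of compactness at the critical Sobolev level: the delicate point --- the strict inequality $m_{a,\mu}^-<\frac1N S^{\frac N2}$, which excludes a concentrating bubble --- has been isolated in Lemma~\ref{lemma0005}, while a bubble escaping to infinity is ruled out because $\lambda_0<0$.

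\emph{Part~(2).} Let $q=2+\frac4N$ and $\mu\ge a^{q\gamma_q-q}\alpha_{N,q}$. By Lemma~\ref{lemma0005}, $m_{a,\mu}^-=0$ and $\mathcal{P}_-^{a,\mu}=\mathcal{P}_{a,\mu}$, while \eqref{eq0013} gives $\mathcal{E}_\mu(u)=\frac1N\|u\|_{2^*}^{2^*}$ for every $u\in\mathcal{P}_{a,\mu}$. Since any $u\in\mathcal{S}_a$ is nonzero, $\|u\|_{2^*}>0$, so $\mathcal{E}_\mu(u)>0$ on all of $\mathcal{P}_-^{a,\mu}$; hence the value $m_{a,\mu}^-=0$ is not attained. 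As every solution of \eqref{eq0001} lies on $\mathcal{P}_{a,\mu}$ by the Pohozaev identity, a ground state would have to realize $\mathcal{E}_\mu=m_{a,\mu}^-=0$ on $\mathcal{P}_{a,\mu}=\mathcal{P}_-^{a,\mu}$, which is impossible; therefore \eqref{eq0001} has no ground states for $\mu\ge a^{q\gamma_q-q}\alpha_{N,q}$, completing $(2)$.
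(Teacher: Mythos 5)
Your argument is correct and follows essentially the same route as the paper: part (2) is the identical two-line observation combining Lemma~\ref{lemma0005} with \eqref{eq0013}, and part (1) rests, exactly as in the paper, on the bound $0<m_{a,\mu}^-<\frac{1}{N}S^{\frac{N}{2}}$ for all $\mu>0$ from Lemma~\ref{lemma0005}, after which the compactness argument you spell out (radial PS sequence, nonvanishing of the weak limit, exclusion of a concentrating bubble by the energy bound and of escape to infinity by $\lambda_0<0$) is precisely the Soave Section~6 machinery that the paper simply cites ``step by step.''
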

\begin{proof}
$(1)$\quad By Lemma~\ref{lemma0005},  $0<m_{a,\mu}^-<\frac{1}{N}S^{\frac{N}{2}}$ for all $\mu>0$ in the case of $2+\frac{4}{N}< q<2^*$.  Now, by following the arguments in \cite[Section~6]{S20} step by step, we can show that $m_{a,\mu}^-$ is attained by some $u_{a, \mu,-}$ which is real valued, nonnegative, radially symmetric and decreasing in $r=|x|$ for all $\mu>0$ in the case of $2+\frac{4}{N}< q<2^*$.
By similar arguments as used for \cite[Lemma~6.2]{S201}, we know that $\mathcal{P}_{a,\mu}=\mathcal{P}_-^{a,\mu}\not=\emptyset$ is a natural constraint in $\mathcal{S}_a$ for all $\mu>0$ in the case of $2+\frac{4}{N}< q<2^*$.  Thus, $u_{a,\mu,-}$ is a solution of \eqref{eq0001} for all $\mu>0$ with some $\lambda_{a, \mu,-}$ in the case of $2+\frac{4}{N}< q<2^*$.  As that in the proof of Proposition~\ref{prop0001}, we can show that $\lambda_{a, \mu,-}<0$ and $u_{a,\mu,-}$ is positive.

$(2)$\quad Suppose the contrary that $m_{a,\mu}^-$ is attained by some $u_{a,\mu,-}$ for $\mu\geq a^{q\gamma_q-q}\alpha_{N,q}$, then by Lemma~\ref{lemma0005} and \eqref{eq0013}, $\|u_{a,\mu,-}\|_{2^*}^{2^*}=0$.  It is impossible since $u_{a,\mu,-}\in\mathcal{S}_a$.  Thus, $m_{a,\mu}^-$ can not be attained for $\mu\geq a^{q\gamma_q-q}\alpha_{N,q}$.  It follows that \eqref{eq0001} has no ground state for all $\mu\geq a^{q\gamma_q-q}\alpha_{N,q}$.
\end{proof}

\section{The asymptotic behavior of $u_{a,\mu,-}$}
In this section, we shall mainly study the question~$(Q_3)$ and give a precisely description of the asymptotic behavior of $u_{a,\mu,-}$ as $\mu\to0^+$.  Since we consider $\mu\to0^+$ now, the assumptions of \cite[Theorem~1.1]{S20}, \cite[Theorem~1.6]{JL201} and Proposition~\ref{prop0001} always hold and thus, $u_{a,\mu,-}$, which is a minimizer of $\mathcal{E}|_{\mathcal{S}_a}(u)$ on $\mathcal{P}_-^{a,\mu}$, exists for all $N\geq3$, $2<q<2^*$ for $\mu>0$ sufficiently small.
\begin{proposition}\label{prop0006}
Let $N\geq3$, $2<q<2^*$ and $u_{a,\mu,-}$ is a critical point of $\mathcal{E}|_{\mathcal{S}_a}(u)$ of mountain pass type.  If $N\geq5$ then $u_{a,\mu,-}\to U_{\ve_0}$ strongly in $H^1(\bbr^N)$ as $\mu\to0^+$,
where $U_{\ve_0}$ is the Aubin-Talanti babble such that $U_{\ve_0}\in\mathcal{S}_a$.  Moreover, if $N\geq9$, then up to translations and rotations, $u_{a,\mu,-}$ is the unique minimizer of $\mathcal{E}|_{\mathcal{S}_a}(u)$ on $\mathcal{P}_-^{a,\mu}$ for $\mu>0$ sufficiently small.
\end{proposition}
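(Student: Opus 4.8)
The plan is to combine the energy comparison $m_{a,\mu}^-\to\frac{1}{N}S^{N/2}$ (from Soave's Theorem~1.1, part~(2)) with a concentration-compactness / profile-decomposition analysis of the minimizing critical point $u_{a,\mu,-}=:u_\mu$. First I would record the basic facts: $u_\mu\in\mathcal{P}_-^{a,\mu}$, so $\mathcal{E}_\mu(u_\mu)=\frac{\mu}{q}(\frac{q\gamma_q}{2}-1)\|u_\mu\|_q^q+\frac1N\|u_\mu\|_{2^*}^{2^*}$, and $u_\mu$ solves \eqref{eq0001} with $\lambda_\mu<0$ and $\lambda_\mu a^2=\mu(\gamma_q-1)\|u_\mu\|_q^q$. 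Since $\mathcal{E}_\mu(u_\mu)=m_{a,\mu}^-<\frac1N S^{N/2}$ and $\gamma_q<1$ for $q<2^*$, the first term is negative, hence $\|u_\mu\|_{2^*}^{2^*}<S^{N/2}$; together with $u_\mu\in\mathcal{P}_-^{a,\mu}$ this gives $\|\nabla u_\mu\|_2^2$ bounded. Thus $\{u_\mu\}$ is bounded in $H^1(\bbr^N)$ (using $\|u_\mu\|_2^2=a^2$). As $u_\mu$ is radially symmetric, pass to a weak limit $u_\mu\rightharpoonup \bar u$, with $u_\mu\to \bar u$ strongly in $L^q(\bbr^N)$ by the compact radial embedding. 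Since $m_{a,\mu}^-\to\frac1N S^{N/2}$ and $\frac\mu q(\frac{q\gamma_q}{2}-1)\|u_\mu\|_q^q\to0$ (because $\|u_\mu\|_q$ is bounded), we get $\|u_\mu\|_{2^*}^{2^*}\to S^{N/2}$; feeding this into the Pohozaev identity of $\mathcal{P}_-^{a,\mu}$ also gives $\|\nabla u_\mu\|_2^2\to S^{N/2}$.

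Next I would run a Brezis--Lieb / Struwe-type decomposition. Writing $v_\mu=u_\mu-\bar u$, from $\|u_\mu\|_{2^*}^{2^*}\to S^{N/2}$, $\|\nabla u_\mu\|_2^2\to S^{N/2}$ and the Sobolev inequality \eqref{eq0048} one shows that $\bar u$ must be either $0$ or a suitable Aubin--Talanti bubble; more precisely $v_\mu$ carries all the critical mass and energy in the limit, and $\|\nabla v_\mu\|_2^2-\|v_\mu\|_{2^*}^{2^*}\to0$, so the ``bubble'' of $v_\mu$ realizes the Sobolev constant. To promote this to strong $H^1$ convergence to a single bubble $U_{\ve_0}$ I would use that $N\geq5$: here the Talanti bubbles $U_\ve\in L^2(\bbr^N)$, so the $L^2$-constraint $\|u_\mu\|_2^2=a^2$ is compatible with a bubble and in fact pins down a unique dilation parameter $\ve_0$ by $\|U_{\ve_0}\|_2^2=a^2$. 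The key extra input is that the Lagrange multiplier stays bounded and actually $\lambda_\mu\to0$: indeed $|\lambda_\mu| a^2=\mu(1-\gamma_q)\|u_\mu\|_q^q\to0$. Rescaling $u_\mu$ around its (radial) concentration point/scale and testing the equation $-\Delta u_\mu=\lambda_\mu u_\mu+\mu|u_\mu|^{q-2}u_\mu+|u_\mu|^{2^*-2}u_\mu$ against $u_\mu$ and against $x\cdot\nabla u_\mu$, one sees the rescaled family converges in $D^{1,2}$ to a solution of $-\Delta U=U^{2^*-1}$ with prescribed $L^2$-mass $a^2$, hence to $U_{\ve_0}$; combined with $\|u_\mu\|_2^2=a^2$ fixed and Fatou, no mass escapes and the convergence is strong in $H^1(\bbr^N)$. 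The main obstacle here is excluding the ``splitting'' scenario in which a nonzero $\bar u\neq0$ coexists with a bubble at a different scale: this is ruled out by the sharp energy level, since two nonzero profiles would force $m_{a,\mu}^-\geq$ (energy of $\bar u$) $+\frac1N S^{N/2}>\frac1N S^{N/2}$ in the limit, contradicting $m_{a,\mu}^-\to\frac1N S^{N/2}$.

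Finally, for the uniqueness statement when $N\geq9$ I would argue by contradiction following the standard nondegeneracy scheme. Suppose $u_\mu^{(1)},u_\mu^{(2)}$ are two distinct minimizers along a sequence $\mu\to0$. By the first part both converge strongly in $H^1$ to the same bubble $U_{\ve_0}$ (after translations/rotations, which we fix). Set $\xi_\mu=(u_\mu^{(1)}-u_\mu^{(2)})/\|u_\mu^{(1)}-u_\mu^{(2)}\|_{H^1}$; then $\xi_\mu$ solves the linearized equation
\begin{eqnarray*}
-\Delta \xi_\mu=\lambda_\mu\xi_\mu+\mu\,c_\mu(x)\,\xi_\mu+(2^*-1)\,d_\mu(x)\,\xi_\mu,
\end{eqnarray*}
where $c_\mu$ is bounded in suitable Lebesgue spaces and $d_\mu(x)\to U_{\ve_0}^{2^*-2}$, together with the orthogonality $\int_{\bbr^N}(u_\mu^{(1)}+u_\mu^{(2)})\xi_\mu=0$ coming from $\|u_\mu^{(i)}\|_2^2=a^2$. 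Passing to the limit (here $N\geq9$ guarantees the decay of $U_{\ve_0}$ is strong enough that $U_{\ve_0}^{2^*-2}\in L^{N/2}$ with good tail control, so no loss of compactness at infinity and the limit $\xi_\mu\to\xi_0$ is nontrivial), $\xi_0$ is a nonzero solution of the linearization $-\Delta\xi_0=(2^*-1)U_{\ve_0}^{2^*-2}\xi_0$ with $\int U_{\ve_0}\xi_0=0$. By the nondegeneracy of the Aubin--Talanti bubble, $\xi_0$ lies in the span of $\partial_\ve U_{\ve_0}$ and $\partial_{x_j}U_{\ve_0}$; the translation components are killed by the radial symmetry (or by recentering), and the dilation component $\partial_\ve U_{\ve_0}$ is excluded by the constraint $\int U_{\ve_0}\xi_0=0$ since $\frac{d}{d\ve}\|U_\ve\|_2^2\big|_{\ve_0}\neq0$ for $N\geq5$. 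Hence $\xi_0=0$, contradicting $\|\xi_\mu\|_{H^1}=1$ and strong convergence. Therefore the minimizer is unique up to translations and rotations for $\mu$ small. The delicate point in this last step is the uniform control of the nonlinear remainder $d_\mu$ and of the $L^2$-tails of $u_\mu^{(i)}$ near infinity, which is exactly where the restriction $N\geq9$ enters.
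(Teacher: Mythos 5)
There is a genuine gap in the first (convergence) part, and the hard step of the second (uniqueness) part is acknowledged but not carried out. For the convergence, your profile decomposition correctly reduces matters to excluding vanishing of the weak limit $\bar u$ and to identifying the dilation parameter, but neither step is actually closed. The dangerous scenario is not "splitting" (two nonzero profiles), which your energy argument does rule out, but \emph{vanishing}: $\bar u=0$ with the Dirichlet energy concentrating at a scale $\sigma_\mu\to0$. In that scenario the limit energy is exactly $\frac1N S^{\frac N2}$, so there is no energy contradiction, and the rescaled functions $v_\mu=\sigma_\mu^{\frac{N-2}{2}}u_\mu(\sigma_\mu\cdot)$ carry $L^2$ mass $a^2/\sigma_\mu^2\to\infty$, so Fatou gives only $\|U_{\ve_*}\|_2^2\le\liminf\|v_\mu\|_2^2$ and cannot "pin down" $\ve_0$ by $\|U_{\ve_0}\|_2^2=a^2$; mass can perfectly well escape to spatial infinity at the original scale. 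The paper closes this by a second-order expansion of $m_{a,\mu}^-$ in $\mu$: testing with $(U_{\ve_0})_{t(\mu)}$ gives $m_{a,\mu}^-\le\frac1N S^{\frac N2}-\frac{\mu}{q}\|U_{\ve_0}\|_q^q+o(\mu)$, while the Sobolev inequality applied to $u_{a,\mu,-}\in\mathcal P_-^{a,\mu}$ gives a matching lower bound; subtracting yields the quantitative estimate $\|u_{a,\mu,-}\|_q^q\ge\|U_{\ve_0}\|_q^q+o(1)$. This is the missing ingredient: via Lions' lemma (and radial monotonicity) it forces $\bar u\ne0$ at scale one, and, combined with Fatou in $L^2$ and the monotonicity of $\ve\mapsto\|U_\ve\|_q$ and $\ve\mapsto\|U_\ve\|_2$, it forces $\ve=\ve_0$ and full mass in the limit, hence strong $H^1$ convergence. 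Without some version of this refined expansion your argument does not conclude.

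For uniqueness, the scheme you describe (normalized difference, linearization, nondegeneracy of the bubble) is the right one, but the step you defer — "uniform control of the $L^2$-tails, which is exactly where $N\ge9$ enters" — is precisely the content of the proof, not a technicality. Note also that the logic is the opposite of what you wrote: since $\lambda_\mu\to0$, the linearized problem has no $L^2$ coercivity, so the normalized difference $w_\mu$ converges to its limit only in $D^{1,2}$, and by injectivity of the limiting linearized operator (with the mass constraint) that limit is $\xi_0=0$; there is no contradiction yet because $\|w_\mu\|_{H^1}=1$ could be carried entirely by the $L^2$ norm at infinity. The paper then derives the contradiction by proving pointwise barriers $|w_\mu|\lesssim r^{2-N}$ for $r\gtrsim|\lambda_\mu^*|^{-1/4}$ and $|w_\mu|\lesssim r^{2-\frac{\alpha+N}{2}}$ in the intermediate range (using the decay $u_{a,\mu,-}\lesssim(1+r^2)^{-\frac{N-2}{2}}$ and the maximum principle), and checking that the resulting $L^2$ integrals are small — this is where the dimension restriction is actually used. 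As written, your proposal asserts the limit $\xi_0$ is nontrivial and stops at the point where the real work begins.
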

\begin{proof}
By \cite[Theorem~1.4]{S20}, $m_{a,\mu}^+\to0$ as $\mu\to0^+$.  Moreover, by \cite[Theorem~1.4]{S20} again, we know that
$m_{a,\mu}^-\to\frac{1}{N}S^{\frac{N}{2}}$ as $\mu\to0^+$, and
\begin{eqnarray*}
\|\nabla u_{a,\mu,-}\|_2^2,\|u_{a,\mu,-}\|_{2^*}^{2^*}\to S^{\frac{N}{2}}\quad\text{as }\mu\to0^+
\end{eqnarray*}
for $2+\frac{4}{N}\leq q<2^*$.  On the other hand, by \cite[Lemma~4.2]{S20} and similar arguments as used for \cite[Lemma~5.7]{S201}, we also have $m_{a,\mu}^-\gtrsim1$ for $\mu>0$ sufficiently small in the case of $2<q<2+\frac{4}{N}$.  Thus, by adapting similar arguments as used in the proof of \cite[Theorem~1.1]{S20} for the case of $2+\frac{4}{N}\leq q<2^*$ to the case of $2<q<2+\frac{4}{N}$, we can also show that
$m_{a,\mu}^-\to\frac{1}{N}S^{\frac{N}{2}}$ as $\mu\to0^+$, and
\begin{eqnarray*}
\|\nabla u_{a,\mu,-}\|_2^2,\|u_{a,\mu,-}\|_{2^*}^{2^*}\to S^{\frac{N}{2}}\quad\text{as }\mu\to0^+
\end{eqnarray*}
for $2<q<2+\frac{4}{N}$ (see also \cite[Theorem~1.7]{JL201}).  It follows that, up to a subsequence, $\{u_{a,\mu,-}\}$ is a minimizing sequence of the following minimizing problem:
\begin{eqnarray}\label{eq0028}
S=\inf_{u\in D^{1,2}(\bbr^N)\backslash\{0\}}\frac{\|\nabla u\|_2^2}{\|u\|_{2^*}^{2}}.
\end{eqnarray}
Since $N\geq5$, $U_\ve\in L^2(\bbr^N)$ for all $\ve>0$.  We then choose $\ve_0>0$ such that $U_{\ve_0}\in\mathcal{S}_a$.
By \cite[Lemma~4.2]{S20}, there exists $t(\mu)>0$ such that $(U_{\ve_0})_{t(\mu)}\in\mathcal{P}_-^{a,\mu}$ for $\mu>0$ sufficiently small, that is,
\begin{eqnarray*}
[t(\mu)]^2S^{\frac{N}{2}}=\mu\gamma_q\|U_{\ve_0}\|_q^q[t(\mu)]^{q\gamma_q}+[t(\mu)]^{2^*}S^{\frac{N}{2}}.
\end{eqnarray*}
Clearly, by the implicit function theorem, $t(\mu)$ is of class $C^1$ for $|\mu|<<1$ such that $t(0)=1$.  It follows from $S^{\frac{N}{2}}(1-[t(\mu)]^{2^*-2})=\mu\gamma_q\|U_{\ve_0}\|_q^q[t(\mu)]^{q\gamma_q-2}$ that
\begin{eqnarray}\label{eq0078}
t(\mu)=1-\frac{\gamma_q\|U_{\ve_0}\|_q^q}{(2^*-2)S^{\frac{N}{2}}}\mu+o(\mu),
\end{eqnarray}
which implies
\begin{eqnarray}
m_{a,\mu}^-&\leq&\mathcal{E}_\mu((U_{\ve_0})_{t(\mu)})\notag\\
&=&\frac{1}{N}S^{\frac{N}{2}}-\frac{\mu\gamma_q\|U_{\ve_0}\|_q^q}{2^*}-\frac{\mu}{q}(1-\frac{q\gamma_q}{2^*})\|U_{\ve_0}\|_q^q+o(\mu)\notag\\
&=&\frac{1}{N}S^{\frac{N}{2}}-\frac{\mu\|U_{\ve_0}\|_q^q}{q}+o(\mu)\label{eq0080}
\end{eqnarray}
for $N\geq5$.  Since we have $m_{a,\mu}^-=\frac{1}{N}\|\nabla u_{a,\mu,-}\|_2^2-\frac{\mu}{q}(1-\frac{q\gamma_q}{2^*})\|u_{a,\mu,-}\|_q^q$, by \eqref{eq0080},
\begin{eqnarray}\label{eq0030}
\frac{1}{N}\|\nabla u_{a,\mu,-}\|_2^2-\frac{\mu}{q}(1-\frac{q\gamma_q}{2^*})\|u_{a,\mu,-}\|_q^q\leq\frac{1}{N}S^{\frac{N}{2}}-\frac{\mu\|U_{\ve_0}\|_q^q}{q}+o(\mu).
\end{eqnarray}
On the other hand, by \eqref{eq0028} and $u_{a,\mu,-}\in\mathcal{P}_-^{a,\mu}$,
\begin{eqnarray*}
S&\leq&\frac{\|\nabla u_{a,\mu,-}\|_2^2}{\|u_{a,\mu,-}\|_{2^*}^{2}}\\
&=&\frac{\|\nabla u_{a,\mu,-}\|_2^2}{(\|\nabla u_{a,\mu,-}\|_2^2-\mu\gamma_q\|u_{a,\mu,-}\|_q^q)^{\frac{2}{2^*}}}\\
&=&(\|\nabla u_{a,\mu,-}\|_2^2-\mu\gamma_q\|u_{a,\mu,-}\|_q^q)^{\frac{2}{N}}+\frac{\mu\gamma_q\|u_{a,\mu,-}\|_q^q}{S^{\frac{N-2}{2}}}+o(\mu\|u_{a,\mu,-}\|_q^q).
\end{eqnarray*}
It follows that
\begin{eqnarray}\label{eq0031}
\|\nabla u_{a,\mu,-}\|_2^2\geq S^{\frac{N}{2}}-\frac{N-2}{2}\mu\gamma_q\|u_{a,\mu,-}\|_q^q+o(\mu\|u_{a,\mu,-}\|_q^q).
\end{eqnarray}
Combining \eqref{eq0030} and \eqref{eq0031}, we have
\begin{eqnarray}\label{eq0032}
\|u_{a,\mu,-}\|_q^q\geq\|U_{\ve_0}\|_q^q+o(1).
\end{eqnarray}
Since $\{u_{a,\mu,-}\}$ is bounded in $H^1(\bbr^N)$, $u_{a,\mu,-}\rightharpoonup u_{0,-}$ weakly in $H^1(\bbr^N)$ as $\mu\to0^+$ up to a subsequence.
Since $u_{a,\mu,-}$ is radial and decreasing for $r=|x|$,
\begin{eqnarray*}
\sup_{y\in\bbr^N}\int_{B_1(y)}|u_{a,\mu,-}|^2dx=\int_{B_1(0)}|u_{a,\mu,-}|^2dx.
\end{eqnarray*}
Thus, by \eqref{eq0032}, Lions' lemma \cite[Lemma~1.21]{W96} and the Sobolev embedding theorem, $u_{0,-}\not=0$.  Note that it is standard to show that $u_{0,-}$ is a weak solution of the following equation,
\begin{eqnarray*}
-\Delta U=U^{2^*-1},\quad\text{in }\bbr^N,
\end{eqnarray*}
thus, we must have $\|\nabla u_{0,-}\|_2^2\geq S^{\frac{N}{2}}$.  It follows from $\|\nabla u_{a,\mu,-}\|_2^2\to S^{\frac{N}{2}}$ as $\mu\to0^+$ that $u_{a,\mu,-}\to u_{0,-}$ strongly in $D^{1,2}(\bbr^N)$ as $\mu\to0^+$ up to a subsequence, which implies $u_{0,-}=U_{\ve}$ for some $\ve>0$.  Since $\|U_{\ve_0}\|_2^2=a^2$, by the Fatou lemma and \eqref{eq0032},
\begin{eqnarray*}
\|U_{\ve}\|_q^q\geq\|U_{\ve_0}\|_q^q\quad\text{and}\quad\|U_{\ve}\|_2\leq\|U_{\ve_0}\|_2^2.
\end{eqnarray*}
Hence, we must have $\ve=\ve_0$ and thus, $\|u_{0,-}\|_2=\|U_{\ve_0}\|_2^2=a^2$, which implies $u_{a,\mu,-}\to U_{\ve_0}$
strongly in $H^1(\bbr^N)$ as $\mu\to0^+$ up to a subsequence.  Since $U_{\ve_0}$ is the unique Aubin-Talanti babble in $\mathcal{S}_a$, we have $u_{a,\mu,-}\to U_{\ve_0}$
strongly in $H^1(\bbr^N)$ as $\mu\to0^+$.  Moreover, since $u_{a,\mu,-}$ is a solution of \eqref{eq0001}, by the Pohozaev identity and $u_{a,\mu,-}\in\mathcal{P}_{a,\mu}$, we have
\begin{eqnarray}\label{eq2238}
-\lambda_{a,\mu,-}\|u_{a,\mu,-}\|_2^2=(1-\gamma_q)\mu\|u_{a,\mu,-}\|_q^q,
\end{eqnarray}
which implies $\lambda_{a,\mu,-}\to0$ as $\mu\to0^+$.
It remains to prove that $u_{a,\mu,-}$ is the unique minimizer of $\mathcal{E}|_{\mathcal{S}_a}(u)$ on $\mathcal{P}_-^{a,\mu}$ for $\mu>0$ sufficiently small up to translations and rotations.
For this, let us first claim that
\begin{eqnarray}\label{eqnewnew6665}
u_{a,\mu,-}\lesssim\bigg(\frac{1}{1+r^2}\bigg)^{\frac{N-2}{2}}
\end{eqnarray}
for all $r\geq0$ in the case of $\mu>0$ sufficiently small.
Indeed, since $u_{a,\mu,-}$ is a positive and radially decreasing solution of \eqref{eq0001}, by Struss's radial lemma (cf. \cite[Lemma A.IV, Theorem A.I']{BL83} or \cite[Lemma~3.1]{MM14}), $u_{a,\mu,-}\lesssim r^{-\frac{N-1}{2}}$ for $r\geq1$.  Thus, by \eqref{eq2238}, $u_{a,\mu,-}$ satisfies
\begin{eqnarray}\label{eqnewnew6666}
-u_{a,\mu,-}''-\frac{N-1}{r}u_{a,\mu,-}'\lesssim  u_{a,\mu,-}^{\frac{4}{N-2}} u_{a,\mu, -} \lesssim r^{- (2+\delta)} u_{a,\mu, -}\quad\text{for }r\gtrsim1
\end{eqnarray}
for some $\delta >0$. By bootstrapping we obtain the desired decaying estimate (\ref{eqnewnew6665}).

Now, let us consider the following system:
\begin{eqnarray}\label{eq0125}
\left\{\aligned&\mathcal{F}(w,\alpha,\mu)=\Delta w-\alpha w+\mu w^{q-1}+w^{2^*-1},\\
&\mathcal{G}(w,\alpha,\mu)=\|w\|_2^2-a^2,
\endaligned\right.
\end{eqnarray}
where $\alpha,\mu>0$ are parameters.
It is easy to see that $\mathcal{F}(U_{\ve_0},0,0)=0$ and $\mathcal{G}(U_{\ve_0},0,0)=0$.  Let
\begin{eqnarray*}
\mathcal{L}(U_{\ve_0},0,0)=\left(\aligned\partial_w\mathcal{F}(U_{\ve_0},0,0)\quad\partial_\alpha\mathcal{F}(U_{\ve_0},0,0)\\
\partial_w\mathcal{G}(U_{\ve_0},0,0)\quad \partial_\alpha\mathcal{G}(U_{\ve_0},0,0)\endaligned\right)
\end{eqnarray*}
be the linearization of the system~\eqref{eq0125} at $(U_{\ve_0},0,0)$ in $H^1(\bbr^N)\times\bbr$, that is,
\begin{eqnarray*}
\partial_w\mathcal{F}(U_{\ve_0},0,0)=\Delta+(2^*-1)U_{\ve_0}^{2^*-2},\quad\partial_\alpha\mathcal{F}(U_{\ve_0},0,0)=-U_{\ve_0}
\end{eqnarray*}
and
\begin{eqnarray*}
\partial_w\mathcal{G}(U_{\ve_0},0,0)=2U_{\ve_0},\quad\partial_\alpha\mathcal{G}(U_{\ve_0},0,0)=0.
\end{eqnarray*}
Then $\mathcal{L}(U_{\ve_0},0,0)[(\phi,\tau)]=0$ if and only if
\begin{eqnarray}\label{eq0126}
\left\{\aligned&\Delta \phi+(2^*-1)U_{\ve_0}^{2^*-2}\phi-\tau U_{\ve_0}=0,\\
&\int_{\bbr^N}U_{\ve_0}\phi=0.
\endaligned\right.
\end{eqnarray}
We claim that in $H_{rad}^1(\bbr^N)\times\bbr$, $\mathcal{L}(U_{\ve_0},0,0)[(\phi,\tau)]=0$ if and only if $(\phi,\tau)=(0,0)$.  Let $\mathcal{L}(U_{\ve_0},0,0)[(\phi,\tau)]=0$ for some $(\phi,\tau)\in H_{rad}^1(\bbr^N)\times\bbr$.  Since it is well-known (cf. \cite{BE91}) that $W=\frac{N-2}{2}U_{\ve_0}+U_{\ve_0}'r$ is the unique radial solution of the following equation
\begin{eqnarray*}
\Delta \phi+(2^*-1)U_{\ve_0}^{2^*-2}\phi=0
\end{eqnarray*}
in $H^1_{rad}(\bbr^N)$, by multiplying the first equation of \eqref{eq0126} with $W$ and integrating by parts, we have
\begin{eqnarray*}
0=\tau\int_{\bbr^N}WU_{\ve_0}=-\tau\int_{\bbr^N}U_{\ve_0}^2.
\end{eqnarray*}
It follows that $\tau=0$ and thus, $\phi=CW$ for some constant $C\in\bbr$.  By the second equation of \eqref{eq0126},
\begin{eqnarray*}
0=\int_{\bbr^N}U_{\ve_0}\phi=-C\int_{\bbr^N}U_{\ve_0}^2,
\end{eqnarray*}
which implies that $\phi=0$.  Thus, the kernel of the linearization of the system~\eqref{eq0125} at $(U_{\ve_0},0,0)$ in $H_{rad}^1(\bbr^N)\times\bbr$ is trivial, which implies that the linear operator $\mathcal{L}(U_{\ve_0},0,0):H_{rad}^1(\bbr^N)\times\bbr\to H_{rad}^1(\bbr^N)\times\bbr$ is injective.  On the other hand, by similar arguments as used for Proposition~\ref{prop0004}, we know that all minimizers of $\mathcal{E}_\mu(u)|_{\mathcal{S}_a}$ on $\mathcal{P}_-^{a,\mu}$ are real valued, positive, radially symmetric and radially decreasing up to translations and rotations.  Now, suppose that there are at least two minimizers of $\mathcal{E}_\mu(u)|_{\mathcal{S}_a}$ on $\mathcal{P}_-^{a,\mu}$, say $u_{\mu}^*$ and $u_{\mu}^{**}$, then without loss of generality, we may assume that they are all real valued, positive, radially symmetric and radially decreasing.  The corresponding Lagrange multipliers are $\lambda_\mu^*$ and $\lambda_\mu^{**}$, respectively.  Let
\begin{eqnarray*}
w_\mu=\frac{u_{\mu}^*-u_{\mu}^{**}}{\|u_{\mu}^*-u_{\mu}^{**}\|_{H^1}+|\lambda_\mu^*-\lambda_\mu^{**}|}\quad\text{and}\quad \varsigma_\mu=\frac{\lambda_\mu^*-\lambda_\mu^{**}}{\|u_{\mu}^*-u_{\mu}^{**}\|_{H^1}+|\lambda_\mu^*-\lambda_\mu^{**}|},
\end{eqnarray*}
where $\|\cdot\|_{H^1}$ is the usual norm in $H^1(\bbr^N)$.
It is easy to see that $\{w_\mu\}$ is bounded in $H^{1}(\bbr^N)$ and $\{\varsigma_\mu\}$ is bounded.  Moreover, by \eqref{eq0001}, we also have
\begin{eqnarray}
-\Delta w_\mu-\lambda_\mu^*w_\mu-\varsigma_\mu u_\mu^{**}&=&\mu(q-1)\bigg(u_\mu^*+\theta_\mu(u_{\mu}^*-u_{\mu}^{**})\bigg)^{q-2}w_\mu\notag\\
&&+(2^*-1)\bigg(u_\mu^*+\theta_\mu'(u_{\mu}^*-u_{\mu}^{**})\bigg)^{2^*-2}w_\mu,\label{eq9999}
\end{eqnarray}
where $\theta_\mu,\theta_\mu'\in(0, 1)$.  Since $u_\mu^*$ and $u_\mu^{**}$ belong to $\mathcal{S}_a$, we also have
\begin{eqnarray*}
2\int_{\bbr^N}u_\mu^*w_\mu=-\|u_{\mu}^*-u_{\mu}^{**}\|_{2}\|w_\mu\|_{2}.
\end{eqnarray*}
Since the linear operator $\mathcal{L}(U_{\ve_0},0,0):H_{rad}^1(\bbr^N)\times\bbr\to H_{rad}^1(\bbr^N)\times\bbr$ is injective, it is standard to prove that
$(w_\mu, \varsigma_\mu)\rightharpoonup(0, 0)$ weakly in $D^{1,2}(\bbr^N)\times\bbr$ as $\mu\to0^+$.  Now, by multiplying \eqref{eq9999} with $w_\mu$ and integrating by parts,
we can use fact that $u_{\mu}^{**},u_{\mu}^{*}\to U_{\ve_0}$ strongly in $H^1(\bbr^N)$ as $\mu\to+\infty$ to show that
$(w_\mu, \varsigma_\mu)\to(0, 0)$ strongly in $D^{1,2}(\bbr^N)\times\bbr$ as $\mu\to0^+$.  Moreover, by \eqref{eq2238}, we also have
\begin{eqnarray*}
\varsigma_\mu\sim\mu\int_{\bbr^N}\bigg(u_\mu^*+\theta_\mu(u_{\mu}^*-u_{\mu}^{**})\bigg)^{q-1}w_\mu=o(\mu).
\end{eqnarray*}
By \eqref{eq2238}, \eqref{eqnewnew6665} and \eqref{eq9999},
\begin{eqnarray*}
-\Delta w_\mu-\frac12\lambda_\mu^*w_\mu\lesssim \frac{\mu}{r^{N-2}}\quad\text{for }r\gtrsim\frac{1}{|\lambda_\mu^*|^{\frac{1}{4}}}.
\end{eqnarray*}
By \eqref{eq2238}, we also have
\begin{eqnarray*}
-\Delta(r^{2-N})-\frac12\lambda_\mu^*r^{2-N}=-\frac12\lambda_\mu^*r^{2-N}\gtrsim\frac{\mu}{r^{N-2}}\quad\text{for }r\gtrsim\frac{1}{|\lambda_\mu^*|^{\frac{1}{4}}}.
\end{eqnarray*}
Since $w_\mu$ is radial and $\{w_\mu\}$ is bounded in $H^1(\bbr^N)$, by \cite[Lemma~A.2]{BL83},
\begin{eqnarray}\label{eq3338}
|w_\mu|\lesssim r^{-\frac{N-1}{2}}\quad\text{for } r\gtrsim1.
\end{eqnarray}
Thus, by the maximum principle,
\begin{eqnarray}\label{eq3438}
|w_\mu|\lesssim r^{2-N}\quad\text{for } r\gtrsim\frac{1}{|\lambda_\mu^*|^{\frac{1}{4}}}.
\end{eqnarray}
For $1\lesssim r\lesssim\frac{1}{|\lambda_\mu^*|^{\frac{1}{4}}}$, by \eqref{eqnewnew6665}, \eqref{eq9999} and \eqref{eq3338},
\begin{eqnarray*}
-\Delta w_\mu\lesssim \frac{1}{r^4}(\frac{1}{r^{\frac{N-1}{2}}}+\frac{1}{r^{N-2}})\lesssim r^{-\frac{7+N}{2}}\lesssim r^{-\frac{\alpha+N}{2}}
\end{eqnarray*}
in the case of $N\geq5$, where $\alpha=\frac{9}{2}$.  Recall that for $N\geq5$, $r^{2-\frac{\alpha+N}{2}}$ is also a superharmornic function.  Thus, by the maximum principle,
\begin{eqnarray}\label{eq4438}
|w_\mu|\lesssim r^{2-\frac{\alpha+N}{2}}\quad\text{for } 1\lesssim r\lesssim\frac{1}{|\lambda_\mu^*|^{\frac{1}{4}}}.
\end{eqnarray}
Note that by $w_\mu\to0$ strongly in $D^{1,2}(\bbr^N)$ as $\mu\to0^+$ and $\|w_\mu\|_{H^1}^2=1$, we know that $\|w_\mu\|_{2}^2=1+o_\mu(1)$.
Thus, by $w_\mu\to0$ strongly in $D^{1,2}(\bbr^N)$ as $\mu\to0^+$, the Sobolev embedding theorem and \eqref{eq3438} and \eqref{eq4438},
\begin{eqnarray*}
1\sim\int_{\bbr^N}|w_\mu|^2\lesssim o_\mu(1)+\int_{r_0}^{\frac{1}{|\lambda_\mu^*|^{\frac{1}{4}}}}r^{3-\alpha}+\int_{\frac{1}{|\lambda_\mu^*|^{\frac{1}{4}}}}^{+\infty}r^{3-N}=o_\mu(1)+\frac{1}{2}r_0^{-\frac{1}{2}},
\end{eqnarray*}
which is a contradiction by taking $r_0>0$ sufficiently large.  It follows that
$u_{a,\mu,-}$ is the unique minimizer of $\mathcal{E}|_{\mathcal{S}_a}(u)$ on $\mathcal{P}_-^{a,\mu}$ for $\mu>0$ sufficiently small up to translations and rotations if $N\geq5$.
\end{proof}

For $N=3,4$, The Aubin-Talanti babbles $U_\varepsilon\not\in L^2(\bbr^N)$.  Thus, we need to modify the arguments for Proposition~\ref{prop0006} to give a precise description of $u_{a,\mu,-}$ as $\mu\to0^+$ in these two cases.  As in the proof of Proposition~\ref{prop0006}, $\{u_{a,\mu,-}\}$ is also a minimizing sequence of the minimizing problem~\eqref{eq0028} in the cases $N=3,4$.  Since $u_{a,\mu,-}$ is radially symmetric for $r=|x|$, by Lions' result (cf. \cite[Theorem~1.41]{W96}), up to subsequence, there exists $\sigma_\mu>0$ such that for some $\ve_*>0$,
\begin{eqnarray}\label{eqnew6666}
v_{a,\mu,-}(x)=\sigma_\mu^{\frac{N-2}{2}}u_{a,\mu,-}(\sigma_\mu x)\to U_{\varepsilon_*}\text{ strongly in }D^{1,2}(\bbr^N)\text{ as }\mu\to0^+.
\end{eqnarray}
We also remark that since $U_{\varepsilon_*}\not\in L^2(\bbr^N)$ for $N=3,4$ and $\|v_{a,\mu,-}\|_2^2=\frac{a^2}{\sigma_\mu^2}$, by the Fatou lemma, we have $\sigma_\mu\to0$ as $\mu\to0^+$.
\begin{lemma}\label{lemn0001}
Let $N=3,4$ and $2<q<2^*$.
Then
\begin{eqnarray*}
1\sim\left\{\aligned &\frac{\mu\sigma_\mu^{N-\frac{N-2}{2}q}}{-\lambda_{a,\mu,-}},\quad \frac{N}{N-2}<q<2^*,\\
&\frac{\mu \sigma_\mu^{\frac{3}{2}}}{-\lambda_{a,\mu,-}}\ln\bigg(\frac{1}{\sqrt{-\lambda_{a,\mu,-}}\sigma_\mu}\bigg),\quad N=3, q=3,\\
&\frac{\mu \sigma_\mu^{3-\frac{q}{2}}\bigg(\sqrt{-\lambda_{a,\mu,-}}\sigma_\mu\bigg)^{q-3}}{-\lambda_{a,\mu,-}},\quad N=3, 2<q<3.\endaligned\right.
\end{eqnarray*}
\end{lemma}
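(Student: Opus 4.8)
The plan is to reduce the three claimed asymptotics, via the Lagrange‑multiplier identity \eqref{eq2238} and the rescaling \eqref{eqnew6666}, to a single two‑sided estimate of $\|v_{a,\mu,-}\|_q^q$, and then to derive that estimate from uniform pointwise bounds on $v_{a,\mu,-}$.

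\textbf{Step 1 (reduction).} Since $u_{a,\mu,-}$ solves \eqref{eq0001} and lies on $\mathcal{P}_{a,\mu}$, identity \eqref{eq2238} reads $-\lambda_{a,\mu,-}a^2=(1-\gamma_q)\mu\|u_{a,\mu,-}\|_q^q$, and $q<2^*$ forces $1-\gamma_q>0$, so $-\lambda_{a,\mu,-}\sim\mu\|u_{a,\mu,-}\|_q^q$. By \eqref{eqnew6666}, $\|u_{a,\mu,-}\|_q^q=\sigma_\mu^{N-\frac{N-2}{2}q}\|v_{a,\mu,-}\|_q^q$, whence
\begin{eqnarray*}
1\sim\frac{\mu\sigma_\mu^{N-\frac{N-2}{2}q}}{-\lambda_{a,\mu,-}}\,\|v_{a,\mu,-}\|_q^q.
\end{eqnarray*}
The Gagliardo--Nirenberg inequality~\eqref{eq0059} together with $\|\nabla u_{a,\mu,-}\|_2^2\to S^{N/2}$ gives $-\lambda_{a,\mu,-}\lesssim\mu\to0$, and since $\sigma_\mu\to0$ the scale $R_\mu:=(\sqrt{-\lambda_{a,\mu,-}}\,\sigma_\mu)^{-1}$ satisfies $R_\mu\to+\infty$. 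Hence the lemma is equivalent to
\begin{eqnarray*}
\|v_{a,\mu,-}\|_q^q\sim\left\{\aligned&1,\quad\frac{N}{N-2}<q<2^*,\\
&\ln R_\mu,\quad N=3,\ q=3,\\
&R_\mu^{3-q},\quad N=3,\ 2<q<3,\endaligned\right.
\end{eqnarray*}
which is exactly the borderline size of $\int_{|x|\le R_\mu}|x|^{-(N-2)q}\,dx$ for the Aubin--Talanti profile truncated at radius $R_\mu$ (recall $U_{\ve}\in L^q(\bbr^N)$ iff $q>\frac{N}{N-2}$).

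\textbf{Step 2 (uniform pointwise estimates).} Writing $r=|x|$, the radial function $v_{a,\mu,-}$ is positive, decreasing in $r$ and solves
\begin{eqnarray*}
-v_{a,\mu,-}''-\frac{N-1}{r}v_{a,\mu,-}'=\lambda_{a,\mu,-}\sigma_\mu^2v_{a,\mu,-}+\mu\sigma_\mu^{\beta}v_{a,\mu,-}^{q-1}+v_{a,\mu,-}^{2^*-1},\qquad\beta=2-\frac{N-2}{2}(q-2)\in(0,2),
\end{eqnarray*}
with $\lambda_{a,\mu,-}\sigma_\mu^2\to0$ and $\mu\sigma_\mu^{\beta}\to0$, so it is a small perturbation of the Aubin--Talanti equation; indeed $v_{a,\mu,-}\to U_{\ve_*}$ in $C^1_{loc}(\bbr^N)$ by \eqref{eqnew6666} and elliptic regularity, and Strauss' radial lemma (cf. \cite[Lemma A.IV]{BL83}, \cite[Lemma~3.1]{MM14}) gives $v_{a,\mu,-}(r)\lesssim r^{-\frac{N-1}{2}}$ for $r\gtrsim1$. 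Inserting this back, on $1\lesssim r\lesssim R_\mu$ (where the negative linear term only helps) the inhomogeneity is $\lesssim r^{-2-\delta}v_{a,\mu,-}$ for some $\delta>0$, and a bootstrap on the radial ODE — using the maximum principle (cf. \cite{DPG13}) and the ODE comparison technique of \cite{AP86} (see also \cite{GS03,KP89}) — upgrades the decay step by step to $v_{a,\mu,-}(r)\lesssim(1+r)^{2-N}$ for $r\lesssim R_\mu$, uniformly in $\mu$. For $r\gtrsim R_\mu$ the term $\lambda_{a,\mu,-}\sigma_\mu^2v_{a,\mu,-}$ becomes dominant and a barrier $e^{-c\sqrt{-\lambda_{a,\mu,-}}\sigma_\mu r}r^{2-N}$ forces exponential decay beyond scale $R_\mu$. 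A comparison from below, starting from $v_{a,\mu,-}\ge\frac12 U_{\ve_*}$ on a fixed ball, yields the matching lower bound $v_{a,\mu,-}(r)\gtrsim(1+r)^{2-N}$ on $1\lesssim r\lesssim cR_\mu$. In the borderline case $N=3$, $2<q<3$ the term $\mu\sigma_\mu^{\beta}v^{q-1}$ decays too slowly for the iteration to close after finitely many crude steps, so one runs it more carefully to reach the rate $r^{2-N}$.

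\textbf{Step 3 (integration and conclusion).} Split $\|v_{a,\mu,-}\|_q^q=\int_{|x|\le R_\mu}+\int_{|x|>R_\mu}$; the exponential tail makes the outer integral negligible. For the inner integral: if $\frac{N}{N-2}<q<2^*$ the majorant $(1+|x|)^{-(N-2)q}$ is integrable, so dominated convergence gives the upper bound and Fatou (using $\liminf\|v_{a,\mu,-}\|_q^q\ge\|U_{\ve_*}\|_q^q>0$) the lower bound, whence $\|v_{a,\mu,-}\|_q^q\sim1$; if $N=3$, $q=3$ then $\int_1^{R_\mu}r^{-3}r^2\,dr\sim\ln R_\mu$; and if $N=3$, $2<q<3$ then $\int_1^{R_\mu}r^{-q}r^2\,dr\sim R_\mu^{3-q}$. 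Substituting $R_\mu=(\sqrt{-\lambda_{a,\mu,-}}\,\sigma_\mu)^{-1}$ and feeding back into the relation of Step~1 produces the three displayed asymptotics. The heart of the argument is Step~2: producing the pointwise bounds \emph{uniformly in $\mu$} while the polynomial regime $r\lesssim R_\mu$ and the exponential regime $r\gtrsim R_\mu$ are separated by the $\mu$‑dependent, diverging scale $R_\mu$; the case $N=3$, $2<q<3$ is the most delicate, since the slow decay of the subcritical nonlinearity forces the more involved bootstrap.
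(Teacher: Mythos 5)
Your proposal follows the same architecture as the paper's proof: the reduction in Step 1 via the multiplier identity \eqref{eq2238} is identical, the uniform upper bound $v_{a,\mu,-}\lesssim(1+r^2)^{-(N-2)/2}$ is obtained in the paper by exactly the Atkinson--Peletier device you invoke (the monotone quantity $\Psi(r)=-\widetilde v'/(r\widetilde v^{N/(N-2)})$ compared with an explicit Talenti-type profile), and the final step is the same splitting of $\|v_{a,\mu,-}\|_q^q$ at the scale $R_\mu$. One preliminary remark: Strauss's radial lemma applied to $v_{a,\mu,-}$ does not give a $\mu$-uniform bound $r^{-(N-1)/2}$, because $\|v_{a,\mu,-}\|_2^2=a^2/\sigma_\mu^2\to\infty$; the paper instead seeds the comparison with the $L^\infty$ convergence $v_{a,\mu,-}\to U_{\ve_*}$ (Moser iteration) and lets the Atkinson--Peletier monotonicity produce the uniform polynomial bound on all of $(0,\infty)$ in one stroke, rather than by a bootstrap confined to $r\lesssim R_\mu$.

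The substantive gap is in the tail estimate for $N=3$, $2<q<3$. Your claim that the exponential regime begins at $r\gtrsim R_\mu$, so that "the exponential tail makes the outer integral negligible", cannot be justified by the barrier you propose: the maximum principle with $r^{-1}e^{-c\sqrt{-\lambda_{a,\mu,-}}\sigma_\mu r}$ requires the nonlinear terms to be dominated by $|\lambda_{a,\mu,-}|\sigma_\mu^2 v$, i.e. $\mu\sigma_\mu^{3-q/2}v^{q-2}\lesssim|\lambda_{a,\mu,-}|\sigma_\mu^2$, and with only $v\lesssim r^{-1}$ in hand this holds only well beyond $R_\mu$ (the paper obtains it for $r\gtrsim(|\lambda_{a,\mu,-}|\sigma_\mu^2)^{-1}=R_\mu^2$). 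On the intermediate range $R_\mu\lesssim r\lesssim R_\mu^2$ the bound $v\lesssim r^{-1}$ alone contributes $\int r^{2-q}\,dr\sim R_\mu^{2(3-q)}\gg R_\mu^{3-q}$, which destroys the claimed upper estimate. This is precisely why the paper runs the algebraic bootstrap $v\lesssim\big(\mu\sigma_\mu^{3-q/2}/(|\lambda_{a,\mu,-}|\sigma_\mu^2)\big)^{s_n}r^{-(q-1)^n}$ on $r\gtrsim R_\mu$, iterating the subcritical nonlinearity against itself in the spirit of \cite{DPG13} until the tail integral converges with the correct power of $\sigma_\mu$. Your description of the extra work needed in this case ("reach the rate $r^{2-N}$") misidentifies the goal: the rate $r^{-1}$ is already available everywhere; what must be produced is much faster decay, with a controlled prefactor, beyond the scale $R_\mu$.
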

\begin{proof}
By the equation~\eqref{eq0001}, we know that
$v_{a,\mu,-}$ satisfies
\begin{eqnarray}\label{eqnew2200}
-\Delta v_{a,\mu,-}-\lambda_{a,\mu,-}\sigma_{\mu}^2v_{a,\mu,-}=\mu\sigma_{\mu}^{N-\frac{N-2}{2}q}v_{a,\mu,-}^{q-1}+v_{a,\mu,-}^{2^*-1}\quad\text{in }\bbr^N.
\end{eqnarray}
It follows from \eqref{eq2238} that
\begin{eqnarray}\label{eq2239}
-\lambda_{a,\mu,-}\sigma_{\mu}^2\|v_{a,\mu,-}\|_2^2=(1-\gamma_q)\mu\sigma_{\mu}^{N-\frac{N-2}{2}q}\|v_{a,\mu,-}\|_q^q.
\end{eqnarray}
Recall that $\|u_{a,\mu,-}\|_2^2=\sigma_{\mu}^2\|v_{a,\mu,-}\|_2^2=a^2$ and $\|\nabla u_{a,\mu,-}\|_2^2\to S^{\frac{N}{2}}$ as $\mu\to0^+$, by \eqref{eq2238} and the H\"older inequality, $\lambda_{a,\mu,-}\to0$ as $\mu\to0^+$.  Clearly,
\begin{eqnarray}\label{eqnew7777}
\mu\sigma_{\mu}^{N-\frac{N-2}{2}q}\to0\quad\text{as }\mu\to0^+.
\end{eqnarray}
By the H\"older inequality once more,
\begin{eqnarray*}
|\lambda_{a,\mu,-}|\sigma_{\mu}^2\|v_{a,\mu,-}\|_2^2\lesssim\mu\sigma_{\mu}^{N-\frac{N-2}{2}q}\|v_{a,\mu,-}\|_2^{N-\frac{N-2}{2}q}.
\end{eqnarray*}
Since $q>2$, $N-\frac{N-2}{2}q<2$.  It follows from $\|v_{a,\mu,-}\|_2^2\sim\sigma_{\mu}^{-2}\to+\infty$ as $\mu\to0^+$ that
\begin{eqnarray}\label{eqnew7776}
|\lambda_{a,\mu,-}|\sigma_{\mu}^2=o(\mu\sigma_{\mu}^{N-\frac{N-2}{2}q})\quad\text{as }\mu\to0^+.
\end{eqnarray}
Recall that $v_{a,\mu,-}\to U_{\varepsilon_*}$ strongly in $D^{1,2}(\bbr^N)$ as $\mu\to0^+$ up to a subsequence, by \eqref{eqnew7777}-\eqref{eqnew7776}, adapting the Moser iteration (cf. \cite[B.3 Lemma]{S00}) and the $L^p$ theory of elliptic equations to \eqref{eqnew2200} and the Sobolev embedding theorem,
\begin{eqnarray}\label{eqnew1122}
v_{a,\mu,-}\to U_{\varepsilon_*}\quad\text{strongly in $L^\infty(\bbr^N)$ as $\mu\to0^+$ up to a subsequence}
\end{eqnarray}
In what follows, we follow the ideas in \cite{AP86} (see also \cite{GS03,KP89}) to drive a uniformly upper bound of $v_{a,\mu,-}$.  We define
\begin{eqnarray*}
\widetilde{v}_{a,\mu,-}=\frac{1}{v_{a,\mu,-}(0)}v_{a,\mu,-}(\sqrt{v_{a,\mu,-}(0)}x).
\end{eqnarray*}
Since $\widetilde{v}_{a,\mu,-}$ is radial, $\widetilde{v}_{a,\mu,-}$ satisfies
\begin{eqnarray}\label{eqnew2201}
-\widetilde{v}_{a,\mu,-}''-\frac{N-1}{r}\widetilde{v}_{a,\mu,-}'=f(\widetilde{v}_{a,\mu,-})\quad\text{in }\bbr^N.
\end{eqnarray}
where
\begin{eqnarray*}
f(\widetilde{v}_{a,\mu,-})&=&\lambda_{a,\mu,-}\sigma_{\mu}^2v_{a,\mu,-}(0)\widetilde{v}_{a,\mu,-}+\mu\sigma_{\mu}^{N-\frac{N-2}{2}q}[v_{a,\mu,-}(0)]^{q-1}\widetilde{v}_{a,\mu,-}^{q-1}\\
&&+[v_{a,\mu,-}(0)]^{2^*-1}\widetilde{v}_{a,\mu,-}^{2^*-1}.
\end{eqnarray*}
Let
\begin{eqnarray*}
H(r)=r^N(\widetilde{v}_{a,\mu,-}')^2+(N-2)r^{N-1}\widetilde{v}_{a,\mu,-}\widetilde{v}_{a,\mu,-}'+\frac{N-2}{N}r^N\widetilde{v}_{a,\mu,-}f(\widetilde{v}_{a,\mu,-}).
\end{eqnarray*}
Then by direct calculations and using \eqref{eqnew7776}-\eqref{eqnew2201},
\begin{eqnarray*}
H'(r)&=&\frac{r^N\widetilde{v}_{a,\mu,-}'}{N}(4|\lambda_{a,\mu,-}|\sigma_{\mu}^2-(N-2)(2^*-q)\mu\sigma_{\mu}^{N-\frac{N-2}{2}q}v_{a,\mu,-}^{q-2})v_{a,\mu,-}\\
&=&\mu\sigma_{\mu}^{N-\frac{N-2}{2}q}\frac{r^N\widetilde{v}_{a,\mu,-}'}{N}(o_\mu(1)-(N-2)(2^*-q)U_{\varepsilon_*}^{q-2})v_{a,\mu,-}.
\end{eqnarray*}
Since $v_{a,\mu,-}>0$, $\widetilde{v}_{a,\mu,-}'<0$ and $v_{a,\mu,-}$ exponentially decays to zero as $r\to+\infty$, there exists $r_\mu>0$, $H'(r)>0$ for $0<r<r_\mu$ and $H'(r)<0$ for $r>r_\mu$.
Thus, $H(r)>H(0)=0$ for all $r>0$.  Let
\begin{eqnarray*}
\Psi(r)=\frac{-\widetilde{v}_{a,\mu,-}'}{r\widetilde{v}_{a,\mu,-}^{\frac{N}{N-2}}}.
\end{eqnarray*}
Then by direct calculations and using \eqref{eqnew2201},
\begin{eqnarray*}
\Psi'(r)=\frac{N}{N-2}r^{-(1+N)}\widetilde{v}_{a,\mu,-}^{-\frac{2N-2}{N-2}}H(r)>0
\end{eqnarray*}
for all $r>0$.  It follows from \eqref{eqnew2201} once more that
\begin{eqnarray*}
\Psi(r)>\Psi(0)=-\widetilde{v}_{a,\mu,-}''(0)=\frac{d_\mu}{N}
\end{eqnarray*}
where
\begin{eqnarray*}
d_\mu=\lambda_{a,\mu,-}\sigma_{\mu}^2v_{a,\mu,-}(0)+\mu\sigma_{\mu}^{N-\frac{N-2}{2}q}[v_{a,\mu,-}(0)]^{q-1}+[v_{a,\mu,-}(0)]^{2^*-1}.
\end{eqnarray*}
Let
\begin{eqnarray*}
Z_\mu(r)=\frac{1}{(1+\frac{d_\mu}{N(N-2)} r^2)^{\frac{N-2}{2}}}.
\end{eqnarray*}
Then it is easy to check that $\frac{-Z_\mu'(r)}{[Z_\mu(r)]^{\frac{N}{N-2}}}=\frac{d_\mu}{N} r$.  It follows that
\begin{eqnarray*}
\frac{\widetilde{v}_{a,\mu,-}'}{\widetilde{v}_{a,\mu,-}^{\frac{N}{N-2}}}\leq\frac{Z_\mu'(r)}{[Z_\mu(r)]^{\frac{N}{N-2}}}\quad\text{for all }r>0,
\end{eqnarray*}
which together with \eqref{eqnew1122}, implies
\begin{eqnarray}\label{eqnew1111}
v_{a,\mu,-}\lesssim \frac{1}{(1+r^2)^{\frac{N-2}{2}}}\quad\text{for all }r>0
\end{eqnarray}
uniformly for $\mu>0$ sufficiently small.
Now, for the cases $\frac{N}{N-2}<q<2^*$,
\begin{eqnarray*}
\|v_{a,\mu,-}\|_q^q\lesssim\int_0^{+\infty}\frac{1}{(1+r^2)^{\frac{(N-2)q}{2}}}r^{N-1}dr\lesssim1.
\end{eqnarray*}
By the Fatou lemma, $\|v_{a,\mu,-}\|_q^q\geq\|U_{\ve_*}\|_q^q+o_\mu(1)\gtrsim1$.  Thus, by \eqref{eq2239},
\begin{eqnarray*}
\frac{\mu\sigma_\mu^{N-\frac{N-2}{2}q}}{-\lambda_{a,\mu,-}}\sim1\quad\text{for }\frac{N}{N-2}<q<2^*.
\end{eqnarray*}
For $N=3$ and $q=3$, we need to drive the uniformly exponential decay of $v_{a,\mu,-}$ at infinitely both from below and above to obtain the conclusions.  Let
\begin{eqnarray*}
\Phi=r^{-1}e^{-\sqrt{|\lambda_{a,\mu,-}|}\sigma_{\mu} r}.
\end{eqnarray*}
Then it is easy to check (cf. \cite{MM14}) that $-\Delta\Phi-\lambda_{a,\mu,-}\sigma_{\mu}^2\Phi\leq0$ for $r\geq1$ in the case of $N=3$.  Since $v_{a,\mu,-}\to U_{\varepsilon_*}$ strongly in $L^\infty(\bbr^N)$ as $\mu\to0^+$ up to a subsequence, by the maximum principle,
\begin{eqnarray}\label{eqnew1110}
v_{a,\mu,-}\gtrsim r^{-1}e^{-\sqrt{|\lambda_{a,\mu,-}|}\sigma_{\mu} r}\quad\text{for }r\geq1
\end{eqnarray}
in the case of $N=3$.
On the other hand, let
\begin{eqnarray*}
\Upsilon=r^{-1}e^{-\frac{1}{2}\sqrt{|\lambda_{a,\mu,-}|}\sigma_{\mu} r}.
\end{eqnarray*}
Then it is also easy to check that $-\Delta\Upsilon-\frac12\lambda_{a,\mu,-}\sigma_{\mu}^2\Upsilon\geq0$ for $r\geq1$.  Since $\mu\sigma_\mu^{3-\frac{q}{2}}\to0$ as $\mu\to0^+$, by \eqref{eqnew1111}, for
\begin{eqnarray*}
r\gtrsim\frac{1}{|\lambda_{a,\mu,-}|\sigma_{\mu}^2},
\end{eqnarray*}
we have $-\Delta v_{a,\mu,-}-\frac{1}{2}\lambda_{a,\mu,-}\sigma_{\mu}^2v_{a,\mu,-}\leq0$ in the case of $N=3$.  Thus, by the maximum principle and \eqref{eqnew1122} once more,
\begin{eqnarray}\label{eqnew1130}
v_{a,\mu,-}\lesssim r^{-1}e^{-\frac12\sqrt{|\lambda_{a,\mu,-}|}\sigma_{\mu} r}\quad\text{for }r\gtrsim\frac{1}{|\lambda_{a,\mu,-}|\sigma_{\mu}^2}.
\end{eqnarray}
For $q=3$ and $N=3$, by \eqref{eqnew1111} and \eqref{eqnew1130},
\begin{eqnarray*}
\|v_{a,\mu,-}\|_3^3&\lesssim&\int_1^{\frac{1}{|\lambda_{a,\mu,-}|\sigma_{\mu}^2}}r^{-1}
+\int_{\frac{1}{|\lambda_{a,\mu,-}|\sigma_{\mu}^2}}^{+\infty}e^{-\sqrt{|\lambda_{a,\mu,-}|}\sigma_{\mu} r}\\
&\lesssim&\ln\bigg(\frac{1}{\sqrt{|\lambda_{a,\mu,-}|}\sigma_{\mu}}\bigg).
\end{eqnarray*}
By \eqref{eqnew1110}, for $q=3$ and $N=3$, we also have
\begin{eqnarray*}
\|v_{a,\mu,-}\|_3^3\gtrsim\int_1^{\frac{1}{\sqrt{|\lambda_{a,\mu,-}|}\sigma_{\mu}}}r^{-1}\gtrsim\ln\bigg(\frac{1}{\sqrt{|\lambda_{a,\mu,-}|}\sigma_{\mu}}\bigg).
\end{eqnarray*}
Thus, the conclusion for $q=3$ and $N=3$ then follows \eqref{eq2239}.  For $N=3$ and $2<q<3$, we need to construct a newly upper bound of $v_{a,\mu,-}$ by adapting ideas in \cite{DPG13}.  By \eqref{eqnew1111}, we have
\begin{eqnarray*}
-\Delta v_{a,\mu,-}-\frac12\lambda_{a,\mu,-}\sigma_{\mu}^2v_{a,\mu,-}\lesssim\mu\sigma_{\mu}^{3-\frac{q}{2}}r^{1-q}\quad\text{for }r\gtrsim\frac{1}{\sqrt{|\lambda_{a,\mu,-}|}\sigma_{\mu}}.
\end{eqnarray*}
On the other hand, let $\phi_\mu\sim\frac{\mu\sigma_{\mu}^{3-\frac{q}{2}}}{|\lambda_{a,\mu,-}|\sigma_{\mu}^2}r^{1-q}$, then by direct calculations,
\begin{eqnarray*}
-\Delta \phi_\mu-\frac12\lambda_{a,\mu,-}\sigma_{\mu}^2\phi_\mu\gtrsim\mu\sigma_{\mu}^{3-\frac{q}{2}}r^{1-q}\quad\text{for }r\gtrsim\frac{1}{\sqrt{|\lambda_{a,\mu,-}|}\sigma_{\mu}}.
\end{eqnarray*}
By \eqref{eqnew1111} and the maximum principle,
\begin{eqnarray}\label{eqnew1119}
v_{a,\mu,-}\lesssim\frac{\mu\sigma_{\mu}^{3-\frac{q}{2}}}{|\lambda_{a,\mu,-}|\sigma_{\mu}^2}r^{1-q} \quad\text{for }r\gtrsim\frac{1}{\sqrt{|\lambda_{a,\mu,-}|}\sigma_{\mu}}.
\end{eqnarray}
Now, using \eqref{eqnew1119} as a new barrier and by \eqref{eqnew1111}, we know that
\begin{eqnarray*}
-\Delta v_{a,\mu,-}-\frac12\lambda_{a,\mu,-}\sigma_{\mu}^2v_{a,\mu,-}\lesssim\mu\sigma_{\mu}^{3-\frac{q}{2}}\bigg(\frac{\mu\sigma_{\mu}^{3-\frac{q}{2}}}{|\lambda_{a,\mu,-}|\sigma_{\mu}^2}r^{1-q}\bigg)^{q-1}
\end{eqnarray*}
for $r\gtrsim\frac{1}{\sqrt{|\lambda_{a,\mu,-}|}\sigma_{\mu}}$.  Thus, by similar comparisons, we have
\begin{eqnarray*}
v_{a,\mu,-}\lesssim\bigg(\frac{\mu\sigma_{\mu}^{3-\frac{q}{2}}}{|\lambda_{a,\mu,-}|\sigma_{\mu}^2}\bigg)^{q}r^{-(q-1)^2} \quad\text{for }r\gtrsim\frac{1}{\sqrt{|\lambda_{a,\mu,-}|}\sigma_{\mu}}.
\end{eqnarray*}
By iterating the above arguments $n$ times for a sufficiently large $n$ such that $q(q-1)^n-3>0$, we have
\begin{eqnarray}\label{eqn7777}
v_{a,\mu,-}\lesssim\bigg(\frac{\mu\sigma_{\mu}^{3-\frac{q}{2}}}{|\lambda_{a,\mu,-}|\sigma_{\mu}^2}\bigg)^{s_n}r^{-(q-1)^n} \quad\text{for }r\gtrsim\frac{1}{\sqrt{|\lambda_{a,\mu,-}|}\sigma_{\mu}},
\end{eqnarray}
where $s_n=s_{n-1}(q-1)+1$ which implies
\begin{eqnarray*}
s_n=\frac{(q-1)^{n+1}-1}{q-2}.
\end{eqnarray*}
By \eqref{eqnew1110}, we have
\begin{eqnarray}\label{eqn7776}
\|v_{a,\mu,-}\|_q^q\gtrsim\int_1^{\frac{1}{\sqrt{|\lambda_{a,\mu,-}|}\sigma_{\mu}}}r^{2-q}e^{-q\sqrt{|\lambda_{a,\mu,-}|}\sigma_{\mu} r}
\gtrsim\bigg(\frac{1}{|\lambda_{a,\mu,-}|\sigma_{\mu}^2}\bigg)^{\frac{3-q}{2}}
\end{eqnarray}
and
\begin{eqnarray*}
\|v_{a,\mu,-}\|_2^2\gtrsim\int_1^{\frac{1}{\sqrt{|\lambda_{a,\mu,-}|}\sigma_{\mu}}}e^{-2\sqrt{|\lambda_{a,\mu,-}|}\sigma_{\mu} r}
\gtrsim\bigg(\frac{1}{|\lambda_{a,\mu,-}|\sigma_{\mu}^2}\bigg)^{\frac{1}{2}}.
\end{eqnarray*}
It follows from \eqref{eq2239} that
\begin{eqnarray}\label{eqn7778}
|\lambda_{a,\mu,-}|\gtrsim\mu\sigma_\mu^{3-\frac{q}{2}}\bigg(\frac{1}{|\lambda_{a,\mu,-}|\sigma_{\mu}^2}\bigg)^{\frac{3-q}{2}}\quad\text{and}\quad
\bigg(\frac{1}{|\lambda_{a,\mu,-}|\sigma_{\mu}^2}\bigg)^{\frac{1}{2}}|\lambda_{a,\mu,-}|\lesssim\mu
\end{eqnarray}
which implies
\begin{eqnarray*}
\frac{\mu\sigma_{\mu}^{3-\frac{q}{2}}}{|\lambda_{a,\mu,-}|\sigma_{\mu}^2}\lesssim|\lambda_{a,\mu,-}|^{\frac{3-q}{2}}\sigma_{\mu}^{1-q}\lesssim\sigma_{\mu}^{2(2-q)}.
\end{eqnarray*}
Now, by \eqref{eqn7777} and \eqref{eqn7778},
\begin{eqnarray}
\|v_{a,\mu,-}\|_q^q&\lesssim&\int_1^{\frac{1}{\sqrt{|\lambda_{a,\mu,-}|}\sigma_{\mu}}}r^{2-q}+
(\sigma_{\mu})^{-2((q-1)^{n+1}-1)}\int_{\frac{1}{\sqrt{|\lambda_{a,\mu,-}|}\sigma_{\mu}}}^{+\infty}r^{2-q(q-1)^n}\notag\\
&\lesssim&\bigg(\frac{1}{|\lambda_{a,\mu,-}|\sigma_{\mu}^2}\bigg)^{\frac{3-q}{2}}
+\sigma_\mu^{2(q(q-1)^n-3)-2((q-1)^{n+1}-1)}\notag\\
&\lesssim&\bigg(\frac{1}{|\lambda_{a,\mu,-}|\sigma_{\mu}^2}\bigg)^{\frac{3-q}{2}}+\sigma_\mu^{(q-1)^n(2+o_n(1))}\notag\\
&=&\bigg(\frac{1}{|\lambda_{a,\mu,-}|\sigma_{\mu}^2}\bigg)^{\frac{3-q}{2}}.\label{eqn7775}
\end{eqnarray}
The conclusion for $N=3$ and $2<q<3$ follows from \eqref{eq2239}, \eqref{eqn7776} and \eqref{eqn7775}.
\end{proof}

With Lemma~\ref{lemn0001} in hands, we can obtain the following.
\begin{proposition}\label{prop0009}
Let $N=3,4$ and $2<q<2^*$.
Then
\begin{eqnarray*}
w_{\mu,-}=\ve_{\mu}^{\frac{N-2}{2}}u_{a,\mu,-}(\ve_{\mu} x)\to U_{\ve_*}\quad\text{strongly in }D^{1,2}(\bbr^N)\text{ as }\mu\to0^+
\end{eqnarray*}
up to a subsequence for some $\ve_*>0$,
where $\ve_{\mu}>0$ satisfies
\begin{eqnarray}\label{eq0071}
\mu\sim\left\{\aligned&\ve_{\mu}^{6-q}e^{-2\ve_{\mu}^{-2}},\quad N=4, 2<q<2^*,\\
&\ve_{\mu}^{\frac{q}{2}-1},\quad N=3, 3<q<2^*,\\
&\frac{\ve_{\mu}^{\frac{1}{2}}}{\ln(\frac{1}{\ve_\mu})},\quad N=3, q=3,\\
&\ve_\mu^{5-\frac{3q}{2}},\quad N=3, 2<q<3.\endaligned\right.
\end{eqnarray}
\end{proposition}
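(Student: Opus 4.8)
The plan is to pin down the concentration scale $\sigma_\mu$ of $u_{a,\mu,-}$ as an explicit function of $\mu$ (matching the right–hand sides of \eqref{eq0071} up to constants) and then transfer the already–known convergence of $v_{a,\mu,-}$ to $w_{\mu,-}$. Recall from the discussion preceding Lemma~\ref{lemn0001} that, along a subsequence, $v_{a,\mu,-}(x)=\sigma_\mu^{\frac{N-2}{2}}u_{a,\mu,-}(\sigma_\mu x)\to U_{\ve_*}$ strongly in $D^{1,2}(\bbr^N)$ with $\sigma_\mu\to0^+$, and that $\sigma_\mu^2\|v_{a,\mu,-}\|_2^2=\|u_{a,\mu,-}\|_2^2=a^2$. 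Since $w_{\mu,-}(x)=(\ve_\mu/\sigma_\mu)^{\frac{N-2}{2}}v_{a,\mu,-}((\ve_\mu/\sigma_\mu)x)$, and strong $D^{1,2}$–convergence is preserved under a rescaling by a factor ranging in a compact subset of $(0,\infty)$, once $\ve_\mu$ is defined through \eqref{eq0071} it will suffice to show $\ve_\mu\sim\sigma_\mu$: passing to a further subsequence with $\ve_\mu/\sigma_\mu\to t_*\in(0,\infty)$ gives $w_{\mu,-}\to t_*^{\frac{N-2}{2}}U_{\ve_*}(t_*\,\cdot)=U_{\ve_*/t_*}$ in $D^{1,2}(\bbr^N)$, and one relabels $\ve_*$.

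The analytic core is a sharp two–sided estimate of $\|v_{a,\mu,-}\|_2^2$. Combining the uniform pointwise bound $v_{a,\mu,-}\lesssim(1+r^2)^{-\frac{N-2}{2}}$ from \eqref{eqnew1111} with the exponential lower bound $v_{a,\mu,-}\gtrsim r^{-(N-2)}e^{-\sqrt{|\lambda_{a,\mu,-}|}\,\sigma_\mu r}$ for $r\geq1$ of the type \eqref{eqnew1110}, and with an upper bound exhibiting decay faster than $r^{-(N-2)}$ beyond the crossover scale $R_\mu\sim(\sqrt{|\lambda_{a,\mu,-}|}\,\sigma_\mu)^{-1}$ (where the linear term $|\lambda_{a,\mu,-}|\sigma_\mu^2 v_{a,\mu,-}$ overtakes $\Delta v_{a,\mu,-}$) of the type \eqref{eqnew1130} — the case $N=4$ needing only the obvious analogues of these barrier arguments, the case $N=3$ with $2<q<3$ the iterated–barrier bootstrap behind \eqref{eqn7777} — one gets $\|v_{a,\mu,-}\|_2^2\sim R_\mu$ for $N=3$ and $\|v_{a,\mu,-}\|_2^2\sim\ln R_\mu$ for $N=4$. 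Feeding in $\|v_{a,\mu,-}\|_2^2=a^2\sigma_\mu^{-2}$ then produces the missing link between the Lagrange multiplier and the scale: $|\lambda_{a,\mu,-}|\sim\sigma_\mu^2$ for $N=3$, and $\ln\!\big(1/(\sqrt{|\lambda_{a,\mu,-}|}\,\sigma_\mu)\big)\sim\sigma_\mu^{-2}$ for $N=4$.

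With this in hand I would eliminate $\lambda_{a,\mu,-}$ via Lemma~\ref{lemn0001}. For $N=3$, substituting $|\lambda_{a,\mu,-}|\sim\sigma_\mu^2$ into the three identities of that lemma and simplifying gives $\mu\sim\sigma_\mu^{\frac q2-1}$ for $3<q<2^*$, $\mu\sim\sigma_\mu^{1/2}/\ln(1/\sigma_\mu)$ for $q=3$, and $\mu\sim\sigma_\mu^{5-\frac{3q}{2}}$ for $2<q<3$; since each right–hand side of \eqref{eq0071} is a strictly increasing function of $\ve_\mu$ near $0$ with exactly these profiles, $\ve_\mu$ is well defined, $\ve_\mu\to0^+$, and $\ve_\mu\sim\sigma_\mu$. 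For $N=4$, Lemma~\ref{lemn0001} gives $-\lambda_{a,\mu,-}\sim\mu\sigma_\mu^{4-q}$, so $\sqrt{|\lambda_{a,\mu,-}|}\,\sigma_\mu\sim\mu^{1/2}\sigma_\mu^{(6-q)/2}$; plugging this into the $N=4$ relation above and using $\ln(1/\sigma_\mu)=o(\sigma_\mu^{-2})$ yields $\ln(1/\mu)\sim\sigma_\mu^{-2}$, while \eqref{eq0071} (with $6-q>0$, the exponential factor dominating the polynomial one) forces $\ln(1/\mu)\sim\ve_\mu^{-2}$; hence again $\ve_\mu\sim\sigma_\mu$. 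Together with the first paragraph this completes the proof.

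The step I expect to be the main obstacle is the two–sided control of $\|v_{a,\mu,-}\|_2^2$, i.e. showing that $v_{a,\mu,-}$ genuinely switches from the non–$L^2$ Aubin–Talanti tail to fast decay precisely at the scale $R_\mu$. For $N=4$ the logarithm absorbs any scale imprecision, so a comparatively crude decay improvement over \eqref{eqnew1111} at a polynomially related scale suffices; for $N=3$ one really needs the exponential upper bound of type \eqref{eqnew1130} to kick in at $R_\mu$ rather than at the conservative scale $R_\mu^2$, which calls for refined barrier/ODE arguments in the spirit of Lemma~\ref{lemn0001} and, when $q$ is small, for the bootstrap of \eqref{eqn7777} to handle the slowly decaying $\mu$–nonlinearity on the intermediate annulus. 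Everything after that estimate is routine bookkeeping with the $\sim$–relations.
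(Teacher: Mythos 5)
Your overall scaffolding (define $\ve_\mu$ through \eqref{eq0071}, reduce everything to $\ve_\mu\sim\sigma_\mu$, and transfer the $D^{1,2}$-convergence of $v_{a,\mu,-}$ to $w_{\mu,-}$ by a bounded rescaling) agrees with the paper, and your $N=4$ case is essentially sound, since there the logarithm really does absorb the imprecision in the crossover scale. But the mechanism you propose for proving $\ve_\mu\sim\sigma_\mu$ when $N=3$ has a genuine gap that you flag but do not close, and it is not a technicality. You need the two--sided estimate $\|v_{a,\mu,-}\|_2^2\sim R_\mu$ with $R_\mu=(\sqrt{|\lambda_{a,\mu,-}|}\,\sigma_\mu)^{-1}$, and while the lower bound $\gtrsim R_\mu$ follows from the subsolution $r^{-1}e^{-\sqrt{|\lambda_{a,\mu,-}|}\sigma_\mu r}$ as in \eqref{eqnew1110}, the only available upper barrier, namely \eqref{eqnew1130}, produces exponential decay only for $r\gtrsim(|\lambda_{a,\mu,-}|\sigma_\mu^2)^{-1}=R_\mu^2$. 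Integrating the Aubin--Talenti profile $r^{-1}$ out to $R_\mu^2$ gives only $\|v_{a,\mu,-}\|_2^2\lesssim R_\mu^2$, and feeding the pair $R_\mu\lesssim\sigma_\mu^{-2}\lesssim R_\mu^2$ into $\|v_{a,\mu,-}\|_2^2=a^2\sigma_\mu^{-2}$ yields only $\sigma_\mu^2\lesssim|\lambda_{a,\mu,-}|\lesssim1$, which is vacuous. Showing that the exponential regime actually begins at $R_\mu$ rather than $R_\mu^2$ amounts to a matched-asymptotics statement about the tail whose verification is itself circular as you set it up: whether the nonlinear terms are dominated by $|\lambda_{a,\mu,-}|\sigma_\mu^2 v_{a,\mu,-}$ on the annulus $R_\mu^{1/2}\lesssim r\lesssim R_\mu$ depends on the very relation $|\lambda_{a,\mu,-}|\sim\sigma_\mu^2$ you are trying to establish.

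The deeper issue is that your argument never uses the minimality of $u_{a,\mu,-}$, whereas the paper's proof does, and in an essential way: the relation \eqref{eq0071} and the one-sided bound $\sigma_\mu\gtrsim\ve_\mu$ (i.e.\ \eqref{eq8877}) come from comparing $m_{a,\mu}^-$ with $\mathcal{E}_\mu((V_\ve)_{t_{\mu,\ve}})$ for the truncated bubbles \eqref{eq0069} and optimizing over $\ve$, which produces the lower bound \eqref{eqnew1199} on $\|u_{a,\mu,-}\|_q^q$. The reverse inequality is then obtained from pointwise decay applied to $\|w_{\mu,-}\|_q^q$ and the identity \eqref{eq8866}. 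In other words, the paper deliberately works with the $L^q$ quasi-norm, which for $q>\frac{N}{N-2}$ is insensitive to the non-$L^2$ tail of the bubble, precisely to avoid the sharp tail control your $L^2$-based route requires. Without either supplying the refined barrier argument that pins the crossover at $R_\mu$ (including the bootstrap of \eqref{eqn7777} on the intermediate annulus when $2<q<3$), or importing the variational energy comparison as an independent input, your proof of the $N=3$ cases does not close.
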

\begin{proof}
Let $\{V_{\ve}\}$ be the family given by \eqref{eq0069}.  Since $2\geq\frac{N}{N-2}$ for $N\geq4$, By \cite[(4.2)--(4.5)]{MM14},
\begin{eqnarray}\label{eq0037}
\|V_{\ve}\|_{q}^{q}\sim\left\{\aligned&\ve^{N-\frac{N-2}{2}q},\quad N=3,4, \frac{N}{N-2}<q<2^*,\\
&\ve^{\frac{3}{2}}\ln(R_\ve \ve^{-1}),\quad N=3, q=3,\\
&\ve^{3-\frac{q}{2}}(R_\ve \ve^{-1})^{3-q},\quad N=3, 2<q<3
\endaligned\right.
\end{eqnarray}
for $\ve>0$ sufficiently small.  By \cite[Lemmas~4.2, 5.1 and 6.1]{S20}, there exist $t_{\mu,\ve}>0$ such that
\begin{eqnarray*}
\|\nabla V_\ve\|_2^2=\mu\gamma_q\|V_\ve\|_q^qt_{\mu,\ve}^{q\gamma_q-2}+\|V_\ve\|_{2^*}^{2^*}t_{\mu,\ve}^{2^*-2}
\end{eqnarray*}
and
\begin{eqnarray*}
2\|\nabla V_\ve\|_2^2<\mu q\gamma_q^2\|V_\ve\|_q^qt_{\mu,\ve}^{q\gamma_q-2}+2^*\|V_\ve\|_{2^*}^{2^*}t_{\mu,\ve}^{2^*-2}.
\end{eqnarray*}
Thus, $\{t_{\mu,\ve}\}$ is uniformly bounded and bounded from below away from $0$ for all $\ve,\mu>0$ sufficiently small.  By \eqref{eq0023} and \eqref{eq0036},
\begin{eqnarray*}
S^{\frac{N}{2}}(1-t_{\mu,\ve_\mu}^{2^*-2})=\mu\gamma_q\|V_{\ve}\|_{q}^{q}t_{\mu,\ve}^{q\gamma_q-2}+O((R_\ve\ve^{-1})^{2-N})).
\end{eqnarray*}
Then we can use similar arguments as used for \eqref{eq0078} to show that
\begin{eqnarray*}
t_{\mu,\ve}=1-(1+o(1))\frac{\mu\gamma_q\|V_{\ve}\|_{q}^{q}+O((R_\ve\ve^{-1})^{2-N}))}{(2^*-2)S^{\frac{N}{2}}}
\end{eqnarray*}
and thus by  similar arguments as used for \eqref{eq0032},
\begin{eqnarray*}
\|u_{a,\mu,-}\|_q^q\geq(1+o(1))(\|V_{\ve}\|_{q}^{q}-C\mu^{-1}(R_\ve\ve^{-1})^{2-N}))),
\end{eqnarray*}
which together with \eqref{eq0023} and \eqref{eq0037}, implies
\begin{eqnarray*}
\|u_{a,\mu,-}\|_q^q\gtrsim\left\{\aligned&\ve^{4-q}-C\mu^{-1}e^{-2\ve^{-2}},\quad N=4, 2<q<2^*,\\
&\ve^{3-\frac{q}{2}}-C\mu^{-1}\ve^2,\quad N=3, 3<q<2^*,\\
&\ve^{\frac{3}{2}}\ln(\frac{1}{\ve})-C\mu^{-1}\ve^2,\quad N=3, q=3,\\
&\ve^{\frac{3q}{2}-3}-C\mu^{-1}\ve^2,\quad N=3, 2<q<3.
\endaligned\right.
\end{eqnarray*}
By choosing $\ve_\mu$ such that the right hand sides of the above estimate take the maximum, we have \eqref{eq0071} and
\begin{eqnarray}\label{eqnew1199}
\|u_{a,\mu,-}\|_q^q\gtrsim\left\{\aligned&\ve_\mu^{N-\frac{N-2}{2}q},\quad N=3,4, \frac{N}{N-2}<q<2^*,\\
&\ve_\mu^{\frac{3}{2}}\ln(\frac{1}{\ve_\mu}),\quad N=3, q=3,\\
&\ve_\mu^{\frac{3q}{2}-3},\quad N=3, 2<q<3.
\endaligned\right.
\end{eqnarray}
We define $w_{\mu,-}=\ve_{\mu}^{\frac{N-2}{2}}u_{a,\mu,-}(\ve_{\mu} x)$, then $\|w_{\mu,-}\|_{2^*}^{2^*}, \|\nabla w_{\mu,-}\|_{2}^{2}\sim1$ and by \eqref{eqnew1199},
\begin{eqnarray}\label{eqnew1299}
\|w_{\mu,-}\|_q^q\gtrsim\left\{\aligned&1,\quad N=3,4, \frac{N}{N-2}<q<2^*,\\
&\ln(\frac{1}{\ve_\mu}),\quad N=3, q=3,\\
&\ve_\mu^{2q-6},\quad N=3, 2<q<3.
\endaligned\right.
\end{eqnarray}
It is easy to see that
\begin{eqnarray}\label{eq8866}
\sigma_{\mu}^{N-\frac{N-2}{2}q}\|v_{a,\mu,-}\|_q^q=\|u_{a,\mu,-}\|_q^q=\ve_{\mu}^{N-\frac{N-2}{2}q}\|w_{\mu,-}\|_q^{q}.
\end{eqnarray}
Then by Lemma~\ref{lemn0001}, \eqref{eq2239} and \eqref{eqnew1299}, we have
\begin{eqnarray}\label{eq8877}
\sigma_{\mu}^{N-\frac{N-2}{2}q}\gtrsim\ve_{\mu}^{N-\frac{N-2}{2}q}
\end{eqnarray}
for $\frac{N}{N-2}<q<2^*$ and $N=3,4$.  On the other hand, we know that
\begin{eqnarray}\label{eqn1288}
w_{\mu,-}(x)=\bigg(\frac{\ve_\mu}{\sigma_\mu}\bigg)^{\frac{N-2}{2}}v_{a,\mu,-}\bigg(\frac{\ve_\mu}{\sigma_\mu}x\bigg)
\end{eqnarray}
and $\widetilde{w}_{\mu,-}$ satisfies
\begin{eqnarray*}
-\Delta \widetilde{w}_{\mu,-}=g(\widetilde{w}_{\mu,-})\quad\text{in }\bbr^N.
\end{eqnarray*}
where
\begin{eqnarray*}
\widetilde{w}_{\mu,-}=\frac{1}{w_{\mu,-}(0)}w_{\mu,-}([w_{\mu,-}(0)]^{s}x)
\end{eqnarray*}
with $s\in\bbr$ and
\begin{eqnarray*}
g(\widetilde{w}_{\mu,-})&=&\lambda_{a,\mu,-}\ve_{\mu}^2[w_{\mu,-}(0)]^{2s}\widetilde{w}_{\mu,-}+\mu\ve_{\mu}^{N-\frac{N-2}{2}q}[w_{\mu,-}(0)]^{2s+q-2}\widetilde{w}_{\mu,-}^{q-1}\\
&&+[w_{\mu,-}(0)]^{2s+2^*-2}\widetilde{w}_{\mu,-}^{2^*-1}.
\end{eqnarray*}
By similar arguments as used for \eqref{eqnew1111}, we have
\begin{eqnarray}\label{eqnew1131}
w_{\mu,-}\lesssim \frac{w_{\mu,-}(0)}{(1+b_\mu r^2)^{\frac{N-2}{2}}}\quad\text{for all }r>0,
\end{eqnarray}
where
\begin{eqnarray*}
b_\mu&=&[w_{\mu,-}(0)]^{2s-1}(\lambda_{a,\mu,-}\ve_{\mu}^2w_{\mu,-}(0)+\mu\ve_{\mu}^{N-\frac{N-2}{2}q}[w_{\mu,-}(0)]^{q-1}\\
&&+[w_{\mu,-}(0)]^{2^*-1}).
\end{eqnarray*}
We recall that $\mu,\sigma_\mu,\lambda_{a,\mu,-}\to0$ as $\mu\to0^+$, and by \eqref{eq2238}, we have $\lambda_{a,\mu,-}\lesssim\mu$.  Thus, by Lemma~\ref{lemn0001}, \eqref{eqnew1122} and \eqref{eqn1288},
\begin{eqnarray}\label{eq8855}
b_\mu&\sim&\bigg(\frac{\ve_\mu}{\sigma_\mu}\bigg)^{(N-2)s+2}.
\end{eqnarray}
Now, take $s=-1$ and by \eqref{eqnew1131}, we can use similar arguments in the proof of Lemma~\ref{lemn0001} to show that
\begin{eqnarray*}
\|w_{\mu,-}\|_q^q\lesssim\bigg(\frac{\ve_\mu}{\sigma_\mu}\bigg)^{\frac{N-2}{2}q-\frac{N}{2}(4-N)}
\end{eqnarray*}
for $\frac{N}{N-2}<q<2^*$ and $N=3,4$, which together with \eqref{eq8866}, implies that $\sigma_\mu\lesssim\ve_\mu$ for $\frac{N}{N-2}<q<2^*$ and $N=3,4$.
It follows from \eqref{eq8877} that $\sigma_\mu \sim\ve_\mu$ for $\frac{N}{N-2}<q<2^*$ and $N=3,4$.  For the case $N=3$ and $q=3$, by $\|w_{\mu,-}\|_2^2\sim\ve_{\mu}^{-2}$, Struss's radial lemma (cf. \cite[Lemma A.IV, Theorem A.I']{BL83} or \cite[Lemma~3.1]{MM14}) and similar arguments as used for \eqref{eqnew1130},
\begin{eqnarray}\label{eqnew1132}
w_{\mu,-}\lesssim \ve_{\mu}^{-2}r^{-1}e^{-\frac12\sqrt{|\lambda_{a,\mu,-}|}\ve_{\mu} r}\quad\text{for }r\gtrsim\frac{1}{\sqrt{|\lambda_{a,\mu,-}|}\ve_{\mu}}.
\end{eqnarray}
It follows from \eqref{eqnew1131} and \eqref{eq8855} that
\begin{eqnarray*}
\|w_{\mu,-}\|_q^q\lesssim\bigg(\frac{\ve_\mu}{\sigma_\mu}\bigg)^{\frac{3}{2}-\frac{3}{2}(s+2)}\ln(\frac{1}{\sqrt{|\lambda_{a,\mu,-}|}\sigma_\mu}).
\end{eqnarray*}
By Lemma~\ref{lemn0001}, taking $s=-1$ and \eqref{eq8866}, we have $\ve_\mu\gtrsim\sigma_\mu$.  By Lemma~\ref{lemn0001}, taking $s=2$ and \eqref{eq8866}, we have $\ve_\mu\lesssim\sigma_\mu$.  Thus, for $N=3$ and $q=3$, we also have $\ve_\mu\sim\sigma_\mu$.  For the case $N=3$ and $2<q<3$, by \eqref{eq2238}, \eqref{eqnew1199}, Lemma~\ref{lemn0001} and $\mu\sim\ve_\mu^{5-\frac{3q}{2}}$,
\begin{eqnarray*}
\sigma_\mu^{\frac{q}{5-q}}\gtrsim\ve_\mu^{\frac{q}{5-q}}\bigg(\frac{\mu}{\ve_\mu^{5-\frac{3q}{2}}}\bigg)^{\frac{3-q}{5-q}}\sim\ve_\mu^{\frac{q}{5-q}}
\end{eqnarray*}
which implies $\sigma_\mu\gtrsim\ve_\mu$.  Thus, by \eqref{eqnew1122}, we can adapt the maximum principle as that in the proof of Proposition~\ref{prop0009} to show that
\begin{eqnarray}\label{eqnewA1110}
w_{a,\mu,-}\gtrsim r^{-1}e^{-\sqrt{|\lambda_{a,\mu,-}|}\ve_{\mu} r}\quad\text{for }r\geq1.
\end{eqnarray}
By \eqref{eqnewA1110}, we can see that the estimates for \eqref{eqn7778} works for $\ve_\mu$ and thus, we have
\begin{eqnarray*}
|\lambda_{a,\mu,-}|\gtrsim\mu\ve_\mu^{3-\frac{q}{2}}\bigg(\frac{1}{|\lambda_{a,\mu,-}|\ve_{\mu}^2}\bigg)^{\frac{3-q}{2}},\quad
\bigg(\frac{1}{|\lambda_{a,\mu,-}|\ve_{\mu}^2}\bigg)^{\frac{1}{2}}|\lambda_{a,\mu,-}|\lesssim\mu
\end{eqnarray*}
and
\begin{eqnarray*}
\frac{\mu\ve_{\mu}^{3-\frac{q}{2}}}{|\lambda_{a,\mu,-}|\ve_{\mu}^2}\lesssim|\lambda_{a,\mu,-}|^{\frac{3-q}{2}}\ve_{\mu}^{1-q}\lesssim\ve_{\mu}^{2(2-q)}.
\end{eqnarray*}
Now, we can follow similar arguments as used in the proof of Lemma~\ref{lemn0001} to show that
\begin{eqnarray*}
\|w_{\mu,-}\|_q^q&\lesssim&\bigg(\frac{1}{|\lambda_{a,\mu,-}|\ve_{\mu}^2}\bigg)^{\frac{3-q}{2}},
\end{eqnarray*}
which, together with Lemma~\ref{lemn0001} and \eqref{eq8866}, implies that $\sigma_\mu\lesssim\ve_\mu$.  Thus, we also have $\sigma_\mu\sim\ve_\mu$ as $\mu\to0^+$ in the case of $N=3$ and $2<q<3$.
\end{proof}

We are ready to give the proofs of Theorem~\ref{thm0001} and \ref{thm0003}.

\vskip0.12in

\noindent\textbf{Proof of Theorem~\ref{thm0001}:}\quad It follows immediately from Lemma~\ref{lemma0002}, Propositions~\ref{prop0001} and \ref{prop0002}.
\hfill$\Box$

\vskip0.12in

\noindent\textbf{Proof of Theorem~\ref{thm0003}:}\quad It follows immediately from Propositions~\ref{prop0004}, \ref{prop0006} and \ref{prop0009}.
\hfill$\Box$

We close this section by

\vskip0.12in

\noindent\textbf{Proof of Theorem~\ref{thm0002}:}\quad
$(1)$\quad Since the proof is similar to that of Proposition~\ref{prop0004}, we only sketch it.  In the case of $q=2+\frac{4}{N}$, we have $\|\varphi\|_2^2=\|\phi_0\|_2^2$ for all minimizers of the Gagliardo--Nirenberg inequality~\eqref{eq0059}, where $\phi$ is the unique solution of \eqref{eqnew1010}.  Thus, we choose $\varphi=\frac{a}{\|\phi_0\|_2}\phi\in\mathcal{S}_a$ as a test function of $m_{a,\mu}^-$.  By using similar arguments as used in the proof of Proposition~\ref{prop0004} and direct calculations,
\begin{eqnarray*}
m_{a,\mu}^-\leq\frac{1}{N}(1-\frac{\mu}{\alpha_{N,q,a}})^{\frac{2^*}{2^*-2}}\bigg(\frac{\|\nabla \phi_0\|_2}{\|\phi_0\|_{2^*}}\bigg)^{N}.
\end{eqnarray*}
It follows from $u_{a,\mu,-}\in\mathcal{P}_{a,\mu}$, the Gagliardo--Nirenberg and Sobolev inequalities that
\begin{eqnarray}\label{eqnew7555}
S^{\frac{N}{2}}\leq\frac{\|\nabla u_{a,\mu,-}\|_2^2}{(1-\frac{\mu}{\alpha_{N,q,a}})^{\frac{2}{2^*-2}}}\leq\bigg(\frac{\|\nabla \phi_0\|_2}{\|\phi_0\|_{2^*}}\bigg)^{N},
\end{eqnarray}
which, together with $u_{a,\mu,-}\in\mathcal{P}_{a,\mu}$ once more and the Pohozaev identity satisfied by $u_{a,\mu,-}$, implies that
\begin{eqnarray*}
-\lambda_{\mu,-}=\frac{1-\gamma_q}{a^2}\mu\|u_{a,\mu,-}\|_q^q\geq(1+o_\mu(1))\frac{1-\gamma_q}{a^2\gamma_q}S^{\frac{N}{2}}(1-\frac{\mu}{\alpha_{N,q,a}})^{\frac{2}{2^*-2}}
\end{eqnarray*}
and
\begin{eqnarray*}
-\lambda_{\mu,-}=\frac{1-\gamma_q}{a^2}\mu\|u_{a,\mu,-}\|_q^q\leq(1-\frac{\mu}{\alpha_{N,q,a}})^{\frac{2}{2^*-2}}\bigg(\frac{\|\nabla \phi_0\|_2}{\|\phi_0\|_{2^*}}\bigg)^{N}.
\end{eqnarray*}
Thus, $\{v_{a,\mu,-}\}$ is bounded in $H^1(\bbr^N)$, where
\begin{eqnarray*}
v_{a,\mu,-}=(\frac{a}{\|\phi_0\|_2})^{\frac{N-2}{2}}s_\mu^{\frac{N}{2}}u_{a,\mu,-}(\frac{a}{\|\phi_0\|_2}s_\mu x)
\end{eqnarray*}
and $s_\mu=(1-\frac{\mu}{\alpha_{N,q,a}})^{-\frac{N-2}{4}}$.  Clearly, $v_{a,\mu,-}$ satisfies
\begin{eqnarray*}
-\Delta v_{a,\mu,-}=\lambda_{\mu,-}\frac{a^2}{\|\phi_0\|_2^2} s_\mu^{2} v_{a,\mu,-}+\mu(\frac{a}{\|\phi_0\|_2})^{\frac{4}{N}}v_{a,\mu,-}^{q-1}+s_\mu^{2-\frac{N}{2}(2^*-2)}v_{a,\mu,-}^{2^*-1}
\end{eqnarray*}
By \eqref{eq1101} and \cite[(I.3)]{W83}, we know that $\alpha_{N,q,a}(\frac{a}{\|\phi_0\|_2})^{\frac{4}{N}}=1$ for $q=2+\frac{4}{N}$.  On the other hand, since $v_{a,\mu,-}$ is radial, it is standard to show that $v_{a,\mu,-}\to \psi_{\nu_a',1}$ strongly in $H^1(\bbr^N)$ as $\mu\to \alpha_{N,q,a}^-$ up to a subsequence for some $\nu_a'>0$.

$(2)$\quad In the cases of $2+\frac{4}{N}<q<2^*$, $\frac{2}{q-2}-\frac{N}{2}\not=0$.  Thus, we can choose $\nu_a>0$, as that in \eqref{eq0010}, such that $\|\psi_{\nu_a,1}\|_2^2=a^2$.  Again, we use $\psi_{\nu_a,1}\in\mathcal{S}_a$ as a test function of $m_{a,\mu}^-$.  By using similar arguments as used in the proof of Proposition~\ref{prop0004} and direct calculations, $m_{a,\mu}^-\lesssim\mu^{-\frac{2}{q\gamma_q-2}}$ as $\mu\to+\infty$.  It follows that $u_{a,\mu,-}\to0$ strongly in $D^{1,2}(\bbr^N)\cap L^q(\bbr^N)$ as $\mu\to +\infty$.  This, together with $u_{a,\mu,-}\in\mathcal{P}_{a,\mu}$ and the Gagliardo--Nirenberg and Sobolev inequalities, implies
\begin{eqnarray*}
\|\nabla u_{a,\mu,-}\|_2^2\geq(1+o_\mu(1))(\mu\gamma_q a^{q-q\gamma_q}C_{N,q}^q)^{-\frac{2}{q\gamma_q-2}}.
\end{eqnarray*}
On the other hand, for the test function $\psi_{\nu_a,1}$, it satisfies
\begin{eqnarray*}
\|\nabla \psi_{\nu_a,1}\|_2^2=\mu\gamma_q\|\psi_{\nu_a,1}\|_q^qt_\mu^{q\gamma_q-2}+\|\psi_{\nu_a,1}\|_{2^*}^{2^*}t_\mu^{2^*-2},
\end{eqnarray*}
where $(\psi_{\nu_a,1})_{t_\mu}\in\mathcal{P}_{a,\mu}$.  It follows that
\begin{eqnarray*}
t_\mu\|\nabla \psi_{\nu_a,1}\|_2\leq\bigg(\frac{1}{\mu a^{q-q\gamma_q}\gamma_q C_{N,q}^q}\bigg)^{\frac{1}{q\gamma_q-2}}.
\end{eqnarray*}
Thus,
\begin{eqnarray*}
\mathcal{E}_\mu((\psi_{\nu_a,1})_{t_\mu})&=&(1+o_\mu(1))(\frac{1}{2}-\frac{1}{q\gamma_q})\|\nabla (\psi_{\nu_a,1})_{t_\mu}\|_2^2\\
&\leq&(1+o_\mu(1))(\frac{1}{2}-\frac{1}{q\gamma_q})\bigg(\frac{1}{\mu a^{q-q\gamma_q}\gamma_q C_{N,q}^q}\bigg)^{\frac{2}{q\gamma_q-2}}.
\end{eqnarray*}
Note that $\mathcal{E}_\mu((\psi_{\nu_a,1})_{t_\mu})\geq m_{a,\mu}^-$ and
\begin{eqnarray*}
m_{a,\mu}^-=\mathcal{E}_\mu(u_{a,\mu,-})=(1+o_\mu(1))(\frac{1}{2}-\frac{1}{q\gamma_q})\|\nabla u_{a,\mu,-}\|_2^2
\end{eqnarray*}
as $\mu\to+\infty$, we must have
\begin{eqnarray}\label{eq1110}
\|\nabla u_{a,\mu,-}\|_2^2=(1+o_\mu(1))(\mu\gamma_q a^{q-q\gamma_q}C_{N,q}^q)^{-\frac{2}{q\gamma_q-2}}.
\end{eqnarray}
As in $(1)$, $\{v_{a,\mu,-}\}$ is bounded in $H^1(\bbr^N)$, where $v_{a,\mu,-}=s_\mu^{\frac{N}{2}}u_{a,\mu,-}(s_\mu x)$ and $s_\mu=\mu^{\frac{1}{q\gamma_q-2}}$.  Again, $v_{a,\mu,-}$ satisfies
\begin{eqnarray*}
-\Delta v_{a,\mu,-}=\lambda_{\mu,-} s_\mu^{2} v_{a,\mu,-}+v_{a,\mu,-}^{q-1}+s_\mu^{2-\frac{N}{2}(2^*-2)}v_{a,\mu,-}^{2^*-1}.
\end{eqnarray*}
Using \eqref{eq1110}, the Pohozaev identity satisfied by $u_{a,\mu,-}$ and $u_{a,\mu,-}\in\mathcal{P}_{a,\mu}$ once more, we have
\begin{eqnarray*}
-\lambda_{\mu,-}=(1+o_\mu(1))\frac{1-\gamma_q}{a^2}(\mu\gamma_q a^{q-q\gamma_q}C_{N,q}^q)^{-\frac{2}{q\gamma_q-2}}.
\end{eqnarray*}
Now, by similar arguments as used in $(1)$, $v_{a,\mu,-}\to \psi_{\nu_a',1}$ strongly in $H^1(\bbr^N)$ as $\mu\to +\infty$ up to a subsequence for some $\nu_a'>0$.  Since in the cases of $2+\frac{4}{N}<q<2^*$, $\frac{2}{q-2}-\frac{N}{2}\not=0$.  By $\|v_{a,\mu,-}\|_2^2=a^2$, we must have $\nu_a'=\nu_a$.  By the uniqueness of $\psi_{\nu_a,1}$ in $\mathcal{S}_a$, we know that $v_{a,\mu,-}\to \psi_{\nu_a,1}$ strongly in $H^1(\bbr^N)$ as $\mu\to +\infty$.  Using the uniqueness of $\psi_{\nu_a,1}$ in $\mathcal{S}_a$ and the nondegenerate of $\psi_{\nu_a,1}$, we can prove the local uniqueness of $u_{a,\mu,-}$ for $\mu>0$ sufficiently large by adapting similar arguments as used for $u_{a,\mu,+}$ in the proof of Proposition~\ref{prop0004}.
\hfill$\Box$

\section{Acknowledgements}
The research of J. Wei is
partially supported by NSERC of Canada and the research of Y. Wu is supported by NSFC (No. 11701554, No. 11771319, No. 11971339).

\end{document}